\newtheorem{theorem}{Theorem}[section]
\newtheorem{proposition}[theorem]{Proposition}
\newtheorem{lemma}[theorem]{Lemma}
\newtheorem*{remark}{Remark}
\newcommand{\R}{\mathbb R}
\newcommand{\T}{\mathbb T}
\newcommand{\dd}{\, \mathrm{d}}
\newcommand{\vv}{\langle v\rangle}
\newcommand{\vvp}{\langle v'\rangle}
\newcommand{\vvO}{\langle v_0\rangle}
\newcommand{\les}{\lesssim}
\DeclareMathOperator{\Id}{Id}
\def\comma{ {\rm ,\qquad{}} }
\newcommand{\be}{\begin{equation}}
\newcommand{\ee}{\end{equation}}
\newcommand{\ns}{{\rm ns}}
\newcommand{\s}{{\rm s}}
\numberwithin{equation}{section}
\title[Local well-posedness for the Boltzmann equation]{Local well-posedness for the Boltzmann equation with very soft potential and 
polynomially decaying initial data}
\author{Christopher Henderson}
\address{Department of Mathematics, University of Arizona, Tucson, AZ 85721}
\email{ckhenderson@math.arizona.edu}
\author{Weinan Wang}
\address{Department of Mathematics, University of Arizona, Tucson, AZ 85721}
\email{weinanwang@math.arizona.edu}
\begin{document}

\begin{abstract}
	In this paper, we address the local well-posedness of the spatially inhomogeneous non-cutoff Boltzmann equation when the initial data decays polynomially in the velocity variable. We consider the case of very soft potentials $\gamma + 2s < 0$.  Our main result completes the picture for local well-posedness in this decay class by removing the restriction $\gamma + 2s > -3/2$ of previous works.  
	Our approach is entirely based on the Carleman decomposition of the collision operator into a lower order term and an integro-differential operator similar to the fractional Laplacian.  Interestingly, this yields a very short proof of local well-posedness when $\gamma \in (-3,0]$ and $s \in (0,1/2)$ in a weighted $C^1$ space that we include as well.
\end{abstract}

\maketitle

\section{Introduction}

The Boltzmann equation is a kinetic equation arising in statistical physics. 
Its solution $f(t,x,v)\geq 0$ models the density of particles of a diffuse gas at time $t\in [0,T]$, at location $x\in \T^3$, and with velocity $v\in \R^3$.  Roughly, each particle travels with a fixed velocity until a collision at which time it takes on a new velocity chosen in a way compatible with physical laws. In this article, we focus on the non-cutoff version of \eqref{e:Boltzmann} that includes the physically realistic singularity at grazing collisions.  The equation reads
\begin{equation}\label{e:Boltzmann}
	\begin{cases}
		\partial_t f + v\cdot \nabla_x f
			= Q(f,f)
			\qquad &\text{ in } [0,T] \times \T^3 \times \R^3,\\
		f(0,\cdot,\cdot)
			= f_{\rm in} \geq 0
			\qquad &\text{ in } \T^3\times \R^3.
	\end{cases}
\end{equation}
The collision operator $Q$ is defined by
\[
	Q(f,f)
		= \int_{\R^3}\int_{{\mathbb S}^2} B(v-v_*,\sigma)
			\left( f(v_*')f(v') - f(v_*)f(v) \right) \dd\sigma \dd v_*,
\]
where $v$ and $v_*$ are pre-collisional velocities and $v'$ and $v_*'$ are post-collisional velocities related by
\[
v' = \frac{v+v_*}{2} + \sigma \frac{|v-v_*|}{2}
	\qquad\text{ and }\qquad
v_*' = \frac{v+v_*}{2} - \sigma \frac{|v-v_*|}{2}
\]
and the collision kernel $B$ is given by
\[
	B(v-v_*,\sigma) = |v-v_*|^\gamma \theta^{-2-2s} \tilde b(\cos \theta),
		\qquad \text{ where }
	\cos \theta = \sigma \cdot \frac{v-v_*}{|v-v_*|},~
	\gamma \in (-3,1],~
	s\in(0,1),
\]
and $\tilde b$ is a positive bounded function.  
In this work, we are mostly interested in the regime of very soft potentials, that is, when $\gamma + 2s < 0$.  

There are several active research directions regarding the well-posedness of the Boltzmann equation: global well-posedness in the spatially homogeneous setting (that is, $x$-independent), global well-posedness and regularity of weak solutions, global well-posedness and convergence of close-to-equilibrium solutions, and local well-posedness with large initial data.  Here, we are interested in the local well-posedness of~\eqref{e:Boltzmann} with large initial data, and, as such, leave it to other references to detail the extensive history of research into the first three categories (see, e.g., \cite{amuxy2011hardpotentials, amuxy2011global, alexandre2002longrange, alexandre2004landau, alonso2018non, chaturvedi2020stability, chen2011boltzmann, desvillettes2005boltzmann, desvillettes2009homogeneous, desvillettes2004smooth, diperna1989cauchy, goudon, gressman2011boltzmann, he2017global, herau2019regularization, luk2019vacuum, silvestre2021solutions, villani1998homogeneous}).

Alexandre, Morimoto, Ukai, Xu, and Yang, often referred to by the acronym AMUXY, made the first serious progress on the local well-posedness theory for the (non-cutoff) Boltzmann equation.  In particular, in a sequence of seminal works, by deriving new estimates on the collision operator $Q$, they were able to establish local well-posedness under the condition that $e^{\alpha |v|^2} f_{\rm in}$ is bounded in certain Sobolev-based spaces~\cite{amuxy2010regularizing, amuxy2011qualitative, amuxy2011bounded, amuxy2011uniqueness, amuxy2013mild}.  We note that the Gaussian decay plays a large role in their analysis to compensate for moment loss.

The first results weakening the Gaussian-decay condition on the initial data are due to Morimoto and Yang \cite{morimoto2015polynomial}.   They established local well-posedness in an $H^6$-based space under the assumptions that $\gamma \in (-3/2,0]$ and $s \in (0,1/2)$.   This was later extended by Henderson, Snelson, and Tarfulea~\cite{HST_boltz_wp}, who showed local well-posedness in an $H^5$-based space under the assumption $s\in (0,1)$ and $\max{\{-3, -\frac{3}{2}-2s\}}<\gamma<0$.  Our goal, in the present work, is to remove the restriction $\gamma + 2s > -3/2$.   In general, the larger $\gamma + 2s$ is, the more the decay of $f$ at $|v|= +\infty$ is the issue, and the smaller (more negative) $\gamma + 2s$ is, the more regularity is the issue. 

Our interest in establishing local well-posedness with initial data that is merely polynomially decaying is due to its relationship to the recent conditional regularity program initiated by Silvestre~\cite{silvestre2016boltzmann} and continued in collaboration with Imbert and Mouhot~\cite{imbert2018decay, IMS_Gaussian, imbert2018schauder, imbert2019smooth, imbert2020regularity, IS2020weakharnack}.  The goal of the program is to understand the regularity theory for the Boltzmann equation conditional to the mass, energy, and entropy densities
\[
	M(t,x)
		= \int f(t,x,v) dv,
		\quad
	E(t,x)
		= \int f(t,x,v) |v|^2 dv
	\quad\text{and}\quad
	H(t,x)
		= \int f(t,x,v) \log f(t,x,v) dv,
\]
satisfying, uniformly in $(t,x)$,
\be\label{e.macroscopic_bounds}
	M, E, H \leq C
		\quad\text{and}\quad
	\frac{1}{C} \leq M
		\qquad \text{ for all } (t,x),
\ee
where $C$ is a positive constant. When $f$ is $x$-independent, it is well-known that these conditions are always satisfied.  To date, Imbert, Mouhot, and Silvestre have developed a Harnack inequality and Schauder estimates, obtained a sharp lower bound on the tail behavior of $f$, and proved a propagation of polynomial upper bounds of $f$ result, all of which depended only on the bounds in~\eqref{e.macroscopic_bounds}.


An upshot of the program of Imbert, Mouhot, and Silvestre is that, roughly, when a suitable local well-posedness result exists, solutions may be continued as long as~\eqref{e.macroscopic_bounds} holds (see~\cite[Section 1.1.2]{imbert2019smooth} and~\cite[Corollary 1.2]{HST_boltz_wp}).  In particular, the local well-posedness result must allow for polynomially decaying initial data as that type of decay can be propagated forward in time depending only on the constant $C$ in~\eqref{e.macroscopic_bounds}.  As no such propagation-of-decay result exists for Gaussian decay, the classical results of AMUXY cannot be used.  It is for this reason that it is important to develop the local well-posedness theory when $f_{\rm in}$ decays only polynomially.


Our main theorem removes the restriction of previous results~\cite{HST_boltz_wp,morimoto2015polynomial} that $\gamma + 2s > -3/2$, thereby completing the picture for local well-posedness with polynomially decaying initial data when $\gamma \in (-3,0)$ and $s\in (0,1)$.  In order to state our result, we define the following two spaces: given $k, n, m\geq 0$ and $T>0$, let
\be\label{d:X space}
	X^{k,n,m}
		= H^{k,n}(\T^3\times \R^3)\cap L^{\infty,m}(\T^3\times \R^3)
	\qquad\text{and}\qquad
	Y_T^{k,n,m}
		= L^\infty([0,T]; X^{k,n,m}).
\ee
For any $p\geq 1$, we use $L^{p,n}$ to refer to the space of functions $g$ such that $\vv^n g \in L^p$, where $\vv^2 = 1 + |v|^2$.  The weighted Sobolev space $H^{k,n}$ is defined analogously.
\begin{theorem}\label{t.main}
Assume that $\gamma + 2s < 0$, $k\geq 5$, $n>3/2$, and $m>M=M(k,n,\gamma,s)$ sufficiently large. Suppose $0\leq f_{\rm in}\in X^{k,n,m}$. 
Then there exists a time $T>0$, depending only on $\|f_{\rm in}\|_{X^{k,n,m}}$ as well as $n$, $k$, $m$, $\gamma$, $s$, and $\tilde b$, and a unique solution $f \in Y_{T}^{k,n,m}\cap C([0,T];H^{k,n}_{x,v})$ of~\eqref{e:Boltzmann} such that $f\geq 0$ and
\[
	\|f\|_{Y_T^{k,n,m}} \lesssim \|f_{\rm in}\|_{X^{k,n,m}}.
\]
\end{theorem}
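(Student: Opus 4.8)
The plan is to construct the solution via a standard iteration scheme, closing estimates in the space $Y_T^{k,n,m}$, and to rely on the Carleman decomposition of $Q$ to handle the very soft potential case. First I would split $Q(g,f) = \mathcal{L}_g f + \mathcal{S}(g,f)$, where, after Carleman coordinates, $\mathcal{L}_g f(v) = \int K_g(v,v')\,(f(v') - f(v))\,\dd v'$ is an integro-differential operator of order $2s$ (comparable to a weighted fractional Laplacian, with kernel $K_g$ controlled by hydrodynamic quantities of $g$) and $\mathcal{S}(g,f)$ is a lower-order term of the schematic form $c_\gamma[g]\, f$ involving a convolution $|v|^\gamma * g$. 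The key feature in the regime $\gamma + 2s < 0$ is that $\mathcal{S}$, while genuinely singular because of the $|v|^\gamma$ factor near $v = v_*$, can be estimated in weighted Sobolev and weighted $L^\infty$ norms using the polynomial decay of $g$ together with the integrability of $|v|^\gamma$ at the origin (since $\gamma > -3$). I would record these as a priori estimates: a bilinear bound $\|\mathcal{S}(g,f)\|_{H^{k,n}} \lesssim \|g\|_{X^{k,n,m}}\|f\|_{X^{k,n,m}}$ and the coercivity/commutator estimates for $\mathcal{L}_g$ acting on $\vv^n \partial^\alpha f$ for $|\alpha| \le k$, paying particular attention to the moment loss generated by commuting $\vv^n$ and $\partial^\alpha$ through the nonlocal operator — this is exactly where the hypothesis $m > M(k,n,\gamma,s)$ enters, absorbing the finitely many extra velocity weights lost in the iteration.

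Next I would set up the linearized problem: given $g$ in a ball of $Y_T^{k,n,m}$, solve $\partial_t f + v\cdot\nabla_x f = \mathcal{L}_g f + \mathcal{S}(g,f)$ with $f(0) = f_{\rm in}$. Because $\mathcal{L}_g$ is a nonlocal diffusion of order $2s < 2$ and $\mathcal{S}(g,\cdot)$ is a bounded (though weighted) multiplier/lower-order perturbation, this linear kinetic equation is solvable in $H^{k,n}_{x,v}$ by, e.g., a Galerkin or viscosity approximation, yielding $f \in C([0,T];H^{k,n})$; the transport term $v\cdot\nabla_x$ is handled as usual by commuting $\partial^\alpha_{x,v}$ and tracking the $x$-derivative cost of the $v$-weight $\vv^n$ against one $x$-derivative, which is harmless since $k \ge 5 > n+1$ is not quite the relevant inequality but the weighted energy method à la \cite{HST_boltz_wp} handles it. Then I would prove the a priori bound: differentiating $\frac{1}{2}\frac{d}{dt}\|\vv^n\partial^\alpha f\|_{L^2}^2$, the good term from the coercivity of $\mathcal{L}_g$ controls the error terms from commutators and from $\mathcal{S}$, giving $\|f(t)\|_{X^{k,n,m}} \le \|f_{\rm in}\|_{X^{k,n,m}} + C\int_0^t (1+\|g\|^2)\,\|f\|\,\dd\tau$ on the $L^{\infty,m}$ part of the norm via the maximum principle for the kinetic operator $\partial_t + v\cdot\nabla_x - \mathcal{L}_g$, using that $\mathcal{S}(g,f)/f$ is bounded. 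A Grönwall argument then shows that for $T$ small depending only on $\|f_{\rm in}\|_{X^{k,n,m}}$ the ball is preserved.

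For contraction, I would estimate the difference of two iterates $f_1 - f_2$ solving the linearized equation with data $g_1, g_2$; the difference solves a linear equation with source $\mathcal{L}_{g_1 - g_2} f_2 + \mathcal{S}(g_1 - g_2, f_2) + \mathcal{S}(g_2, f_1 - f_2)$, and the same weighted energy estimate — carried out in a space with one fewer derivative and slightly fewer weights, $X^{k-1,n-\delta,m-\delta}$, to absorb the loss in the difference of the nonlocal kernels $K_{g_1} - K_{g_2}$ — gives a contraction for $T$ small. Uniqueness follows from the same difference estimate applied directly to two solutions, and nonnegativity $f \ge 0$ propagates from the maximum principle once we know $f_{\rm in} \ge 0$, since $\partial_t + v\cdot\nabla_x - \mathcal{L}_g - c[g]$ satisfies a comparison principle (here one uses that the zeroth-order coefficient coming from $\mathcal{S}$ is, up to the good sign part, bounded below). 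The main obstacle I expect is the commutator estimate for $\mathcal{L}_g$ against high-order weighted derivatives in the regime $\gamma + 2s < 0$: because the kernel $K_g$ degenerates and the operator has order $2s$ which may be very small, one cannot afford to lose $2s$ derivatives on $g$, so the estimates on $K_{g_1} - K_{g_2}$ and on $[\vv^n\partial^\alpha, \mathcal{L}_g]$ must be done carefully, exploiting the precise structure of the Carleman kernel (its cancellation properties and its dependence on $g$ only through low-velocity-moment averages) rather than crude bounds — this is precisely the technical heart that the Carleman-based approach is designed to streamline, and where the improvement over \cite{HST_boltz_wp, morimoto2015polynomial} lies.
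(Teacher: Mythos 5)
Your high-level scheme --- Carleman decomposition, linearization, weighted $L^{\infty,m}$ plus weighted $H^{k,n}$ a priori estimates, iteration, and an energy estimate on differences for uniqueness --- is the same as the paper's. The gap is that essentially all of the paper's actual content sits inside the sentence ``I would record these as a priori estimates.'' For $\gamma+2s<-3/2$ the known bilinear bounds on $Q$ (AMUXY, and their use in \cite{HST_boltz_wp}) genuinely fail, because the coefficient $\int g(w)|v-w|^{\gamma+2s}\,dw$ can no longer be controlled by a weighted $L^2$ norm of $g$ via Cauchy--Schwarz; the replacements (here \Cref{p:commutator}, \Cref{p:thm_2.4}, \Cref{p:symmetry}, proved in \Cref{s:collision_proofs}) must be stated with the first argument of $Q$ measured in $L^{\infty,m}$ or in low-regularity norms, and the schematic bound $\|\mathcal S(g,f)\|_{H^{k,n}}\lesssim\|g\|_{X^{k,n,m}}\|f\|_{X^{k,n,m}}$ you write down does not by itself capture the case-by-case bookkeeping of how many derivatives land on each argument that makes the energy estimate close.

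Two specific steps in your plan would fail as described. First, you propose to absorb the commutator and lower-order errors by ``the coercivity of $\mathcal L_g$.'' In the very soft regime the coercive term $-\int K_g(\delta F)^2$ carries the weight $\langle v\rangle^{\gamma+2s}$ with $\gamma+2s<0$ (indeed possibly close to $-3$), so it degenerates completely at large velocities and cannot absorb error terms carrying the weight $\langle v\rangle^{n}$; the paper's estimates are structured so that \emph{no} absorption is needed --- every term is bounded outright by products of already-controlled norms (e.g.\ \Cref{p:thm_2.4}.(iv) bounds $\int\vv^{2n}fQ_{\rm s}(g,f)$ directly by $\|g\|_{L^{\infty,m}}\|f\|_{L^{2,n}}^2$). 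Second, a commutator estimate for $[\vv^n\partial^\alpha,\mathcal L_g]$ alone does not handle the term where one derivative falls on $g$ and $k-1$ on $f$: there one must pair $Q_{\rm s}(\partial g,\partial^{k-1}f)$ against $\partial^k f$, which loses $2s$ derivatives that cannot be split by duality when $s\geq 1/2$. The paper resolves this with the symmetrization
\[
\int K_{g}\,\delta F\,\delta(\partial F)\,dv'\,dv
=\tfrac12\int(\partial+\partial')K_{g}\,(\delta F)^2\,dv'\,dv
=\tfrac12\int K_{\partial g}\,(\delta F)^2\,dv'\,dv ,
\]
which moves the extra derivative onto $g$ (\Cref{p:symmetry}); this idea is absent from your outline. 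The remaining architecture (regularization by an added Laplacian and method of continuity for the linear problem, iteration $f_i$ solving $\partial_tf_i+v\cdot\nabla_xf_i=Q(f_{i-1},f_i)$, uniqueness by a Gr\"onwall estimate --- which the paper in fact closes at the $L^{2,n}$ level without dropping a derivative, unlike your $X^{k-1,n-\delta,m-\delta}$ contraction) matches the paper and is fine.
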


As discussed above, an important motivation of \Cref{t.main} is to extend the continuation criterion for the Boltzmann equation to the very soft potentials range. 
%
%
%
While such a result does not directly follow from \Cref{t.main} and the Imbert-Mouhot-Silvestre regularity program (as these results only deal with the regime $\gamma + 2s \in [0,2]$), it is likely a straightforward exercise after adding an additional assumption on the $L^\infty_{t,x}L^p_v$ norm of $f$ to~\eqref{e.macroscopic_bounds}.  Indeed, this has already been accomplished for the closely-related Landau equation in~\cite{HST2018landau} in the analogous parameter regime.  An upshot of such a continuation criterion, were it established, is the ability to construct classical solutions from rough initial initial data as accomplished for the Landau equation~\cite{HST2019rough}. These will be the subject of a future work.

As is typical for nonlinear equations, the main step in the proof of local well-posedness of~\eqref{e:Boltzmann} is to establish {\em a priori} estimates on solutions.  In particular, this requires obtaining bounds on the collision operator $Q$ as a bilinear form from and to various Banach spaces.

In order to explain the strategy and difficulties in obtaining such estimates, we discuss the restriction $\gamma + 2s > -3/2$ in~\cite{HST_boltz_wp}.  This is inherited in the application of the estimates on the collision operator developed in~\cite{amuxy2011qualitative}.  For certain key estimates, AMUXY use Fourier analysis, which is most suited $L^2$-based spaces.  However, a major lesson from~\cite{silvestre2016boltzmann} is that one can, roughly, think of $Q$ as having a coefficient of the form
\[
	\int f(t,x,w) |v-w|^{\gamma + 2s} dw,
\]
and one sees, after applying the Cauchy-Schwarz inequality, that such coefficients are bounded using the (weighted) $L^2$-norm of $f$ only when $\gamma + 2s > -3/2$.


In view of the above, it is required to develop new estimates on the collision operator in spaces that are not $L^2$-based.  Our approach is to take advantage of the Carleman decomposition (see equation \eqref{e.carleman1}), which views $Q$ as the sum of an integro-differential operator similar to the fractional Laplacian and a lower order term.  As this is a real space-based approach, it is possible, through intricate analysis, to obtain estimates on $Q(g,f)$ in various spaces depending on both $L^2$- and $L^\infty$-based norms.  This allows us to circumvent the issues encountered in previous works.

Curiously, this approach makes an extremely simple proof of local well-posedness in a weighted $C^1$ space obvious when $s\in (0,1/2)$.  The reason for this is as follows.  First, as observed in~\cite[Proposition~3.2]{HST_boltz_wp}, the Carleman decomposition makes it easy to obtain $L^{\infty,m}$ bounds on $f$ from $\|f_{\rm in}\|_{L^{\infty,m}}$ via a simple comparison principle argument.  The important observation is that, roughly, at a maximum of $\vv^m f$, the only high order term has a good sign.  A straightforward attempt to repeat this for the $L^{\infty,m}$ norm of $\partial f$ is complicated by the fact that $\partial f$ solves an equation involving a term $Q(\partial f, f)$.  From the Carleman decomposition of $Q$, we, roughly, see
\[
	Q(\partial f, f)
		\sim \left( \int \partial f(w) |v-w|^{\gamma +2s} dw\right)
			\Delta^{s} f
			\lesssim \|\partial f\|_{L^{\infty,m}} \|\vv^m f\|_{C^{2s+\epsilon}}.
\]
Fortunately, when $s \in (0,1/2)$, this is lower order and the previous argument can be repeated.


We now state the result.  We first define the $C^k$ analogue of the spaces $X$ and $Y_T$~\eqref{d:X space}:
\be\label{e.tilde_XY}
	\begin{split}
	&\tilde X^{k, m_0,m_1}
		= \{f: \vv^{m_0} \nabla^{\ell} f \in C(\T^3\times \R^3)^{6^\ell} \text{ for } 0 \leq \ell \leq k-1, \vv^{m_1} \nabla^k f \in C(\T^3\times \R^3)^{6^k}\}
	\\&\text{and}\quad
	\tilde Y_T^{k,m_0,m_1}
		= L^\infty([0,T]; \tilde X^{k,m_0,m_1}).
	\end{split}
\ee

\begin{theorem}\label{t:C1_wellposed}
Let $k\geq 1$, $\gamma \in (-3,0]$, $s\in (0,1/2)$, $m_1 > 3 + \gamma + 2s$, and $m_0$ be sufficiently large depending only on $k$, $\gamma$, $s$, and $m_1$.  Let the initial data $0\leq f_{\rm in}\in \tilde X^{k,m_0,m_1}$.
Then there exists a time $T>0$, depending only on $\|f_{\rm in}\|_{\tilde X^{k,m_0,m_1}}$, $\gamma$, $s$, $m_0$, $m_1$, and $\tilde b$, and a unique solution $f\geq 0$ of~\eqref{e:Boltzmann} such that
\[
	\begin{split}
		&f \in \tilde Y_T^{k,m_0,m_1}
			\cap C([0,T]; C^k(\T^3\times \R^3))
			\cap C^1([0,T]; C^{k-1}(\T^3\times \R^3))
		\\&
		\text{and }
		\|f\|_{\tilde Y_T^{k,m_0,m_1}}
			\lesssim \|f_{\rm in}\|_{\tilde X^{k,m_0,m_1}}.
	\end{split}
\]
\end{theorem}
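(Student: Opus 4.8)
The plan is to set up a standard iteration scheme and close it using the Carleman decomposition together with the maximum-principle heuristic described in the introduction. Define a sequence $f^{(0)} = f_{\rm in}$ and let $f^{(j+1)}$ solve the linear transport equation $\partial_t f^{(j+1)} + v\cdot\nabla_x f^{(j+1)} = Q(f^{(j)}, f^{(j+1)})$ with $f^{(j+1)}(0) = f_{\rm in}$. The existence of $f^{(j+1)}$ at each stage is a linear problem; the crux is the uniform-in-$j$ bound $\|f^{(j+1)}\|_{\tilde Y_T^{k,m_0,m_1}} \lesssim \|f_{\rm in}\|_{\tilde X^{k,m_0,m_1}}$ for some $T$ independent of $j$, followed by a contraction estimate in a weaker norm (e.g.\ $\tilde X^{0,m_0,m_1}$ or an $L^\infty$-weighted norm) to pass to the limit. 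Nonnegativity should follow from the comparison principle once the collision operator is written in Carleman form, since the high-order piece acts like a nonnegative-coefficient fractional diffusion and the lower-order piece can be absorbed; alternatively one propagates nonnegativity through the iterates and passes to the limit.

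The heart of the matter is the a priori estimate, which I would carry out by differentiating the equation up to $k$ times in $(x,v)$ and estimating the weighted sup-norm of each derivative via a maximum-principle argument. Write $Q(g,f)$ using the Carleman decomposition \eqref{e.carleman1} as $Q(g,f) = \mathcal{A}[g] : \nabla^2 f + (\text{first order}) + \mathcal{S}[g] f$ where, schematically, $\mathcal{A}[g](v) \sim \big(\int g(w)\,|v-w|^{\gamma+2s+2}\,(\text{angular factor})\,\mathrm{d}w\big)$ is a nonnegative matrix-valued coefficient and $\mathcal{S}[g](v) \sim \int g(w)\,|v-w|^{\gamma+2s}\,\mathrm{d}w$ is the lower-order coefficient; both are controlled by $\|\vv^{m_1} g\|_{L^\infty}$ (plus lower derivatives as needed) precisely because $m_1 > 3 + \gamma + 2s$ makes the $w$-integral converge. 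For a multi-index $\alpha$ with $|\alpha| = \ell$, the function $g_\alpha := \vv^{p(\ell)} \partial^\alpha f$ (with $p(\ell) = m_0$ for $\ell < k$ and $p(k)=m_1$) solves an equation of the form $\partial_t g_\alpha + v\cdot\nabla_x g_\alpha = \mathcal{A}[f]:\nabla^2 g_\alpha + (\text{commutator \& lower order in }g_\alpha) + (\text{terms with }\partial^\beta f, |\beta| \le \ell)$. Evaluating at a point where $|g_\alpha|$ attains its spatial maximum, the leading diffusion term has the favorable sign (as in \cite[Proposition 3.2]{HST_boltz_wp}); the Carleman lower-order term $\mathcal{S}[f]\partial^\alpha f$ contributes $\lesssim \|\vv^{m_1}f\|_{L^\infty}\|g_\alpha\|_{L^\infty}$; and — this is the key point for $s<1/2$ — terms of the form $Q(\partial^\alpha f, f)$ generate, via Carleman, a contribution $\sim \mathcal{S}[\partial^\alpha f]\,\Delta^s f \lesssim \|\vv^{m_1}\partial^\alpha f\|_{L^\infty}\,\|\vv^{m_1} f\|_{C^{2s+\epsilon}}$, which is genuinely lower order since $2s+\epsilon < 1$ permits interpolating $\|\vv^{m_1}f\|_{C^{2s+\epsilon}}$ between $\|f\|_{\tilde X^{0}}$ and $\|\nabla f\|_{\tilde X^{0}}$, both already controlled. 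Summing the resulting differential inequalities over $|\alpha| \le k$ gives $\frac{d}{dt}\|f\|_{\tilde X^{k,m_0,m_1}} \lesssim P(\|f\|_{\tilde X^{k,m_0,m_1}})$ for a polynomial $P$, hence a uniform bound on a short time interval by Grönwall/Riccati comparison.

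Several technical points need care. First, the weights: commuting $\vv^{p(\ell)}$ past $Q$ and past $v\cdot\nabla_x$ produces extra terms, and one must check the weight powers are consistent — taking $m_0$ large relative to $m_1$ and $k$ (as the statement allows) gives the room to absorb weight losses from the $v$-derivatives and from the $|v-w|^{\gamma+2s}$-type kernels. Second, the coefficient $\mathcal{A}[f]$ must be shown to be uniformly elliptic-from-above only (we do not need a lower bound, only the correct sign at the maximum), and its dependence on $f$ must be Lipschitz in the appropriate norm for the contraction step. Third, the contraction estimate: for $f^{(j+1)} - f^{(j)}$ one subtracts the two linear equations, obtains a transport equation with a forcing term bilinear in the differences and the iterates, and closes in a norm one order lower than the iteration norm, which is standard once the a priori bounds are in hand. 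Fourth, the time-continuity claims $f \in C([0,T];C^k) \cap C^1([0,T];C^{k-1})$ follow from the equation itself once $f \in \tilde Y_T^{k,m_0,m_1}$: the right-hand side $Q(f,f)$ lies in $C([0,T];C^{k-1})$, giving $\partial_t f$ continuous into $C^{k-1}$, and a bootstrap/mollification argument upgrades the weak-$*$ continuity in $C^k$ to strong continuity.

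I expect the main obstacle to be the bookkeeping in the a priori estimate — specifically, verifying that every term produced by differentiating $Q(f,f)$ up to order $k$ and commuting with the velocity weight is either (i) the good-signed second-order Carleman term, (ii) bounded by $\|f\|_{\tilde X^{k,m_0,m_1}}$ times the quantity being estimated, or (iii) genuinely lower order in the regularity scale (exploiting $s < 1/2$) — and that the weight exponents can be chosen consistently. The restriction $s<1/2$ enters exactly at type-(iii) terms: if $s \ge 1/2$, the term $\mathcal{S}[\partial^k f]\Delta^s f$ would require controlling $\|\vv^{m_1}f\|_{C^{2s+\epsilon}}$ with $2s+\epsilon \ge 1$, i.e.\ a derivative of order exceeding what a simple interpolation between the $k=0$ and $k=1$ norms provides, and the elementary maximum-principle scheme would break down — which is precisely why the more delicate $L^2$-based argument of Theorem~\ref{t.main} is needed in that regime.
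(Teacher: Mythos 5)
Your proposal follows essentially the same route as the paper: a Carleman decomposition, a comparison/maximum-principle (barrier) argument propagating weighted $L^\infty$ bounds on $f$ and its derivatives — with the key observation that $Q(\partial^\alpha f, f)$ is lower order precisely because $s<1/2$ — followed by the standard regularized linear construction, iteration, and a contraction in a weaker norm. The only cosmetic differences are that you write the singular part schematically as a local operator $\mathcal{A}[g]:\nabla^2 f$ when it is genuinely nonlocal (the good-sign-at-the-touching-point argument works the same way), and that you control the type-(iii) terms by a $C^{2s+\epsilon}$ interpolation where the paper's \Cref{l:Qs_C1} uses the full gradient $\nabla_v f$ directly, both of which hinge on $2s<1$.
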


We note that simply by differentiating the equation, we can obtain further time regularity when $k>1$.  In addition, a careful accounting based on the estimates of the collision operator and the definition of $\tilde Y^{k,m_0,m_1}_T$ yields  the decay in velocity of the $C^\ell_{x,v}$ norms.  This is not the main interest of the statement above so we omit it.

The significance of \Cref{t:C1_wellposed}, besides having such a short proof, is that it improves on \cite{morimoto2015polynomial, HST_boltz_wp} in two major ways.  First, it increases the range of possible $\gamma$: \cite{morimoto2015polynomial} requires $\gamma \in (-3/2,0]$ and~\cite{HST_boltz_wp} requires $\gamma \in (-3/2-2s,0)$.  Second, it weakens conditions on the initial regularity: \cite{morimoto2015polynomial} works in an $H^6$-based space and~\cite{HST_boltz_wp} works in an $H^5$-based space, both of which embed in $C^1$.  Note that it also reduces the regularity required of the initial data in comparison to \Cref{t.main}.  On the other hand, like \cite{morimoto2015polynomial} but unlike~\cite{HST_boltz_wp} and \Cref{t.main}, it only applies to $s \in(0,1/2)$.



\subsection{Notation}\label{sec:notation}

We use the notation $A \lesssim B$ if there is a constant $C$ such that $A \leq C B$.  In general, the constant $C$ may depend on $\gamma$, $s$, $n$, $m$, $k$, $m_0$, $m_1$, and $\tilde b$. Additionally, if an assumption  for an estimate involves a requirement such as $\alpha >\beta$, then the constant $C$ may depend on $\alpha - \beta$. We use $A \approx B$ if $A \lesssim B$ and $B\lesssim A$.  Occasionally, it will be necessary to include a constant, in which case we use $C$ to represent such a constant and this constant $C$ may change line-by-line.

Any integral whose domain of integration in $v$ is not specified is understood to be an integral over $\R^3$ and any integral whose domain of integration in $x$ is not specified is understood to be an integral over $\T^3$.  For example, for any measurable $\varphi$ and any measurable sets $\Omega_x \subset \T^3$ and $\Omega_v\subset \R^3$, we have
\[
	\int \int_{\Omega_v} \varphi(x,v) dv dx
		= \int_{\T^3} \int_{\Omega_v} \varphi(x,v) dv dx
	\quad\text{and}\quad
	\int_{\Omega_x} \int \varphi(x,v) dv dx
		= \int_{\Omega_x} \int_{\R^3} \varphi(x,v) dv dx.
\]
Similarly, we often suppress the domain in Lebesgue, Sobolev, and H\"older spaces when it is clear, writing, e.g., $f \in L^{\infty,m}$ instead of $f \in L^{\infty,m}(\R^3)$ if has already been established that $f: \R^3 \to \R$.

We use $B_R$ to mean a ball of radius $R$ around the origin.  Whenever the ball is not centered at the origin, we denote the center $v_0$ as $B_R(v_0)$.

Finally, when stating estimates on the collision operator $Q(g,f)$, we often omit the assumptions on the involved functions $g$ and $f$.  In these cases, the estimate holds whenever the right hand side is finite.

 \subsection{Outline} The rest of the paper is organized as follows. In \Cref{s:collision}, we consider bounds on the collision operator.  In particular, we recall useful known results, prove some easy extensions of them, and state our main new estimates.  Then, in \Cref{s:lwp}, we prove the existence and uniqueness of solutions using the bounds from \Cref{s:collision}. Afterwards, in \Cref{s:collision_proofs}, we prove the estimates on the collision operator $Q$. Finally, in \Cref{s:simple}, we give a simple proof of local well-posedness for the case of $s\in (0,1/2)$ and $\gamma \in (-3,0]$.

 \section{Estimates on the collision operator}\label{s:collision}

In this section, we state the key estimates on the collision operator $Q$ that we use in our proof of well-posedness.  We begin with a brief overview of the Carleman decomposition allowing us to use ideas from the study of integro-differential operators.  Then we state known estimates and their easy extensions.  Finally, we state new estimates whose proof, contained in \Cref{s:collision_proofs}, makes up the bulk of this manuscript.

 \subsection{Carleman decomposition}
 
 A key tool in our analysis is the Carleman decomposition \cite{carleman1933boltzmann, carleman1957} that views the Boltzmann collision operator $Q$ as the sum of a non-local diffusion operator locally similar to $-(-\Delta)^s$ and a lower order reaction term.  This decomposition is well-known, see \cite{alexandre20003d} for an early discussion of it and \cite[Sections 4 and 5]{silvestre2016boltzmann} for the presentation used here.  Indeed,
 \be\label{e.carleman1}
 	\begin{split}
		&Q(g,f)
			= Q_\s(g,f) + Q_\ns(g,f)\\
		&Q_\s(g,f) = \int (f(v') - f(v)) K_g(v,v') dv'
			\qquad \\
		&Q_\ns(g,f) = c_b (S_\gamma * g) f
		,
	\end{split}
 \ee
 where $S_\gamma(v) = |v|^\gamma$, $c_b>0$ is a fixed constant, and $K_g$ satisfies, for any $g\geq 0$ and any $v,v'\in \R^3$,
 \be\label{e.carleman2}
 	\begin{split}
		&K_g(v,v')
			\approx \frac{1}{|v-v'|^{3+2s}} \int_{w \in v + (v'-v)^\perp} g(w) |v-w|^{\gamma + 2s + 1} dw\\
		&\text{and} \qquad
		K_g(v,v+v') = K_g(v,v-v').
	\end{split}
\ee
We refer to $Q_\s$ as the ``singular'' part and $Q_\ns$ as the ``non-singular'' part.

Actually, to be fully rigorous, $Q_\s$ should be defined using a principal value.  We abuse notation and suppress this as all our estimates occur over symmetric domains near the base point $v$  and are, thus, compatible with the limit involved in the principal value.

\subsection{Previously established estimates and easy extensions}

In this section, we state various estimates on the collision operator that are well-known or are simple extensions of previous results.

%

	\begin{lemma}[Estimates of the kernel $K_{g}$]
		\label{p.w3311}
		For all $r>0$ and $v\in \R^3$,
		\begin{enumerate}[(i)]
			\item 
			\label{l.is3.4}
			 $\displaystyle
			\int\limits_{ B_{2r}(v) \backslash B_{r}(v)} K_g(v',v)\,dv',
			\int\limits_{ B_{2r}(v) \backslash B_{r}(v)} K_g(v,v')\,dv'
			\lesssim
			r^{-2s}
				\int |g(z)||z-v|^{\gamma+2s}\, dz$.\medskip
			\item 
			\label{l.is3.6}
			\quad $\displaystyle
			\left|\,\int [K_g(v,v')-K_g(v',v)]\,dv'\right|
			\lesssim
			\int |g(z)||z-v|^{\gamma}\,dz
			.
			$\medskip
			\item 
			\label{l.is3.7}
			\quad$\displaystyle
			\int\limits_{ B_{r}(v)} (v'-v)K_g(v,v')\,dv'
				= 0					.
			$
			\item 
			\label{l.is3.72}
			\quad$\displaystyle
			\Big|\int_{ B_{r}(v)} (v'-v)K_g(v,v')\,dv\Big|
				\lesssim
				\int |g(z)||z-v'|^{1+\gamma}\,dz
			.$
		\end{enumerate}
	\end{lemma}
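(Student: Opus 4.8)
The plan is to derive all four estimates from the explicit description~\eqref{e.carleman2}, the exact symmetry $K_g(v,v+u)=K_g(v,v-u)$, and one ``integral geometry'' identity: for every nonnegative measurable $F$,
\[
	\int_{\mathbb{S}^2}\int_{w\in v+\omega^\perp} F(w)\,\dd w\,\dd\omega
		= 2\pi\int_{\R^3}\frac{F(z)}{|z-v|}\,\dd z ,
\]
which one proves by writing $w=v+tu$ with $t>0$, $u\in\mathbb{S}^1\cap\omega^\perp$ (so $\dd w=t\,\dd t\,\dd u$) and applying Fubini on $\{(\omega,u):\omega\perp u\}$ to integrate in $\omega$ first, using $\int_{\mathbb{S}^1\cap u^\perp}\dd\omega=2\pi$. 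I would treat the four parts in the order (iii), (i), (iv), (ii). Part~(iii) is immediate: after the substitution $v'=v+u$, the integrand $u\,K_g(v,v+u)$ is odd in $u$ by the symmetry and $B_r$ is symmetric about the origin, so the integral vanishes.

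For~(i), on $B_{2r}(v)\setminus B_r(v)$ one has $|v-v'|\approx r$; extracting $|v-v'|^{-3-2s}\lesssim r^{-3-2s}$ and parametrizing $v'=v+\rho\omega$, the second integral is controlled by $r^{-3-2s}\int_r^{2r}\rho^2\,\dd\rho\int_{\mathbb{S}^2}\big(\int_{v+\omega^\perp}g(w)|v-w|^{\gamma+2s+1}\,\dd w\big)\dd\omega$ (the inner integral depending on $\omega$ only), and the displayed identity with $F(w)=g(w)|v-w|^{\gamma+2s+1}$ (for which $F(z)/|z-v|=g(z)|z-v|^{\gamma+2s}$), together with $\int_r^{2r}\rho^2\,\dd\rho\approx r^{3}$, gives $\lesssim r^{-2s}\int g(z)|z-v|^{\gamma+2s}\,\dd z$. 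For $\int K_g(v',v)\,\dd v'$ the relevant plane is $v'+(v'-v)^\perp=v+\rho\omega+\omega^\perp$, so one substitutes $z=v+\rho\omega+\eta$ jointly in $(\rho,\omega,\eta)$; the fiber over a fixed $z$ is the thin spherical band $\{\omega:(z-v)\cdot\omega\in(r,2r)\}$, on which $|\eta|^2=|z-v|^2-\rho^2$, and a short computation of the coarea Jacobian --- which cancels against the two-dimensional measure of the fiber --- reproduces the same bound.

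For~(iv) I would work with the principal value directly, since the absolute integral diverges when $s\ge1/2$ (near the diagonal $|v'-v|\,|K_g(v,v')|\gtrsim|v-v'|^{-2-2s}$ with coefficient bounded below), and the bound rests on a genuine cancellation not seen by the two-sided comparison~\eqref{e.carleman2} --- so here I would use the exact Carleman formula. Parametrizing $v=v'-\rho e$, $\rho\in(0,r)$, $e\in\mathbb{S}^2$, with $\dd v=\rho^2\,\dd\rho\,\dd e$, the estimate reduces to bounding $\int_0^r\rho^{-2s}\big|\int_{\mathbb{S}^2}e\big(\int_{e^\perp}g(v'-\rho e+\eta)|\eta|^{\gamma+2s+1}\,\dd\eta\big)\dd e\big|\,\dd\rho$. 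The decisive point is the \emph{vectorial} average over $\mathbb{S}^2$: under the substitution $z=v'-\rho e+\eta$ at fixed $\rho$, the fiber over $z$ is the circle $\{e\in\mathbb{S}^2:(z-v')\cdot e=-\rho\}$, on which $|\eta|^2=|z-v'|^2-\rho^2$ is constant and the coarea Jacobian equals $\sqrt{1+\rho^2}\,|\eta|$, while $\oint_{\mathrm{circle}}e\,\dd\sigma$ equals its circumference times the circle's center $-\rho(z-v')/|z-v'|^2$, hence is of size $\approx\rho/|z-v'|$. Assembling the factor $|\eta|^{-1}$ from the Jacobian, the weight $|\eta|^{\gamma+2s+1}$, the gain $\approx\rho/|z-v'|$, and the $\rho$-integral $\int_0^r\rho^{1-2s}(\cdots)\,\dd\rho\approx|z-v'|^{2-2s}$ (convergent because $s<1$ and $\gamma+2s>-3$), the powers of $|z-v'|$ combine to $\gamma+1$, which is the claim.

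Finally, for~(ii): the mechanism is cancellation at the diagonal between $K_g(v,v')$ and $K_g(v',v)$, which to leading order are governed by the \emph{same} plane-integral at distance $O(|v-v'|)$; this gains one power of homogeneity and lowers the weight from $\gamma+2s$ (as in~(i)) to $\gamma$. The cleanest route to the precise statement is mass conservation: since $\int Q(g,f)\,\dd v=0$ for all admissible $g,f$, the decomposition~\eqref{e.carleman1} forces
\[
	\int\big(K_g(v,v')-K_g(v',v)\big)\,\dd v' = c_b\,(S_\gamma * g)(v) ,
\]
whose absolute value is at most $c_b\int|g(z)|\,|z-v|^{\gamma}\,\dd z$; one makes this rigorous by running the argument with $K_g$ truncated to $\{|v-v'|>\delta\}$ and letting $\delta\to0$, or one computes the difference directly from the exact Carleman formula. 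I expect the main obstacle to be the change-of-variables bookkeeping in~(i) and especially~(iv): verifying that the ``planes through a point'' substitution has the stated Jacobian, controlling its degeneration as $\rho\uparrow|z-v'|$ (where the circle of admissible directions collapses to a point), and confirming that the Jacobian factor and the vectorial gain in~(iv) together deliver exactly the weight $1+\gamma$.
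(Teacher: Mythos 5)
Your strategy is sound, and all four scalings check out; but note that the paper itself does not prove this lemma at all --- it simply cites \cite[Lemmas 3.4--3.7]{IS2020weakharnack} --- so what you have written is a reconstruction of the proofs in that reference rather than an alternative to anything in this paper. Your reconstruction does follow the same lines as Imbert--Silvestre's arguments: the spherical-average identity $\int_{\mathbb{S}^2}\int_{v+\omega^\perp}F = 2\pi\int F(z)|z-v|^{-1}dz$ is exactly their device for converting plane integrals into convolutions; the symmetry argument for (iii) is the intended one (and is why the paper's remark about principal values over symmetric domains matters); and (iv) is proved there by precisely the vectorial average over the fiber circle that you describe, which is genuinely necessary since, as you say, the two-sided comparison \eqref{e.carleman2} destroys the cancellation and the absolute integral diverges for $s\ge 1/2$. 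For (ii), your mass-conservation route recovers the classical cancellation lemma (Alexandre--Desvillettes--Villani--Wennberg); it is legitimate provided you justify the Fubini/relabeling step through the $\delta$-truncation you mention, since $\int\int |f(v')-f(v)|K_g(v,v')\,dv'dv$ is not absolutely convergent.

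Two points of caution. First, the claimed coarea Jacobian $\sqrt{1+\rho^2}\,|\eta|$ in (iv) cannot be right as stated (it is dimensionally inhomogeneous, mixing the unit-sphere variable with a length); the correct factor is a homogeneous combination of $|\eta|$ and $|z-v'|$, and while your final power count $R^{\gamma+2s}\cdot R^{-1}\cdot R^{2-2s}=R^{\gamma+1}$ is correct, you should actually carry out the $(\omega,\eta)\mapsto z$ computation rather than assert it, both here and in the $K_g(v',v)$ half of (i). Second, in both of those places the weight $(|z-v|^2-\rho^2)^{(\gamma+2s+1)/2}$ is singular at the edge of the fiber band when $\gamma+2s+1<0$, and the argument survives only because the $\rho$-integration yields $\int_0^1 \tau^{1-2s}(1-\tau^2)^{(\gamma+2s+1)/2}d\tau<\infty$, which uses $\gamma+2s>-3$ and $s<1$; this is exactly the degeneration you flag, and it must be checked explicitly rather than waved at, since it is the only place where the hypotheses on $\gamma$ and $s$ enter.
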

\Cref{p.w3311} follows from \cite[Lemmas 3.4, 3.5, 3.6, and 3.7]{IS2020weakharnack}. 
%
%
%
%
%
%
%
%
The following lemma can be regarded as a slight generalization of \cite[Proposition 2.1]{IMS_Gaussian}.
\begin{lemma}\label{l:new kernel}
	For $0<s<1$, $\alpha > 2s$, $r>0$, and $g: \R^3 \to \R_+$ there holds
	\[
	\int_{B_r(v')} K_g(v',v) |v-v'|^\alpha dv,
		\int_{B_r(v')} K_g(v,v') |v-v'|^\alpha dv
	\lesssim r^{\alpha-2s} \int |g(w)| |v-w|^{\gamma + 2s} dw.
	\]
\end{lemma}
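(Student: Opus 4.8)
The plan is to reduce everything to \Cref{p.w3311}(\ref{l.is3.4}) by a dyadic decomposition in the distance $|v-v'|$. Fix $v'$ (the argument for the other integral, with $v$ fixed, is identical by the symmetry $K_g(v,v+w)=K_g(v,v-w)$ in \eqref{e.carleman2}, which lets us swap the roles of the two slots up to the universal constants). Write the ball $B_r(v')$ as the union of the dyadic annuli $A_j = B_{2^{-j}r}(v') \setminus B_{2^{-j-1}r}(v')$ for $j \geq 0$. On $A_j$ we have $|v-v'|^\alpha \approx (2^{-j}r)^\alpha$, so
\[
	\int_{B_r(v')} K_g(v',v)\,|v-v'|^\alpha\,dv
		\lesssim \sum_{j\geq 0} (2^{-j}r)^\alpha \int_{A_j} K_g(v',v)\,dv.
\]
Now I would apply \Cref{p.w3311}(\ref{l.is3.4}) with radius $2^{-j-1}r$ (so that the annulus $B_{2\cdot 2^{-j-1}r}(v')\setminus B_{2^{-j-1}r}(v') = A_j$) to bound each inner integral by $(2^{-j-1}r)^{-2s}\int |g(z)||z-v'|^{\gamma+2s}\,dz$.

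Combining, the sum becomes $\sum_{j\geq 0}(2^{-j}r)^\alpha (2^{-j-1}r)^{-2s}$ times the desired $g$-integral, which is $\approx r^{\alpha - 2s}\sum_{j\geq 0} 2^{-j(\alpha - 2s)}$. Here the hypothesis $\alpha > 2s$ is exactly what makes this geometric series converge, yielding the factor $r^{\alpha-2s}$ and completing the estimate. One small technical point to dispatch is that \Cref{p.w3311}(\ref{l.is3.4}) is stated as an integral of $K_g(v',v)\,dv'$ over $B_{2r}(v)\setminus B_r(v)$, i.e.\ with the annulus centered at the base point $v$ and the other variable integrated; one has to check that the two formulations of the annulus estimate agree, which again follows from the evenness of $K_g$ in its second argument (equivalently, from tracking which variable is integrated in \eqref{e.carleman2} — the $|v-w|^{\gamma+2s+1}$ weight and the $|v-v'|^{-3-2s}$ prefactor are symmetric enough that the roles of $v,v'$ can be interchanged at the cost of universal constants).

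I do not expect any serious obstacle: the only place real care is needed is bookkeeping the centers and radii of the dyadic annuli so that each application of \Cref{p.w3311}(\ref{l.is3.4}) is literally legitimate, and confirming that the constant absorbed into $\lesssim$ depends only on $\alpha - 2s$ (through the geometric series) and the implied constants in \eqref{e.carleman2} — consistent with the convention in \Cref{sec:notation}. The statement that this is "a slight generalization of \cite[Proposition 2.1]{IMS_Gaussian}" is borne out: that reference presumably handles a fixed $\alpha$ or a specific weight, whereas here the same dyadic scheme works for any $\alpha > 2s$ uniformly.
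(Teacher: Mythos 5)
Your proposal is correct and follows essentially the same route as the paper: a dyadic annular decomposition of $B_r(v')$, an application of \Cref{p.w3311}.\eqref{l.is3.4} on each annulus, and summation of the resulting geometric series, which converges precisely because $\alpha>2s$. The only cosmetic difference is that the paper enlarges each annulus to the complement of a ball before invoking the kernel estimate, while you apply the annulus form directly; and your worry about matching the two formulations is resolved simply by relabeling $v\leftrightarrow v'$ in \Cref{p.w3311}.\eqref{l.is3.4}, which already covers both argument orders.
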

\begin{proof}
	The proofs of both inequalities are similar, so we show only the latter.  Assume without loss of generality that $g \geq 0$.  We proceed with a simple annular decomposition paired with the existing estimate \Cref{p.w3311}.\eqref{l.is3.4}. Indeed, letting $A_k = B_{2^{-k}r}(v')\setminus B_{2^{-k-1}r}(v')$, we have
	\[
	\begin{split}
	\int_{B_r(v')} &K_g(v,v') |v-v'|^\alpha dv
	= \sum_{k=0}^\infty \int_{A_k} K_g(v,v') |v-v'|^\alpha dv
	\leq \sum_{k=0}^\infty 2^{-\alpha k} r^\alpha \int_{A_k} K_{g}(v,v') dv\\
	&\leq \sum_{k=0}^\infty 2^{-\alpha k} r^\alpha \int_{B_{2^{-k-1}r}^c(v')} K_{g}(v,v') dv
	\lesssim \sum_{k=0}^\infty 2^{- k ( \alpha -2s)} r^{\alpha-2s} \int g(w) |v'-w|^{\gamma+2s} dw.
	\end{split}
	\]
	The claim then follows due to the fact that the sum over $k$ is finite.
\end{proof}

The next lemma concerns bounds on $K_g$ via $\|g\|_{L^{\infty,m}}$.
\begin{lemma}
	\label{l:2.3}
		Fix any $m>3 + \gamma + 2s$, $g \in L^{\infty, m}$, and $v,v'\in \mathbb R^3$. Then
		\[
			|K_{g}(v,v')|
			\lesssim
			\frac{1}{|v-v'|^{3 + 2s}}\|g\|_{L^{\infty, m}}
			\langle v \rangle^{\gamma+2s+1}.
		\]
\end{lemma}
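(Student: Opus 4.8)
The plan is to combine the Carleman kernel estimate \eqref{e.carleman2}, applied to $|g|$ (which dominates $|K_g|$ since $K_g$ is linear in $g$), with the crude pointwise bound $|g(w)| \le \|g\|_{L^{\infty,m}}\langle w\rangle^{-m}$. This reduces the lemma to the purely geometric estimate
\[
	\int_{w \in v + (v'-v)^\perp} \langle w\rangle^{-m}\, |v-w|^{\gamma+2s+1}\, dw \lesssim \langle v\rangle^{\gamma+2s+1},
\]
where the integral runs over the two-dimensional plane through $v$ orthogonal to $v'-v$; note the right-hand side does not see $v'$, and the left-hand side depends on $v'$ only through the direction $e := (v'-v)/|v'-v|$.

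To evaluate the plane integral, I would decompose $v = v_\parallel + v_\perp$ with $v_\parallel \parallel e$ and $v_\perp \perp e$, and parametrize the plane by $w = v_\parallel + u$ with $u$ ranging over $e^\perp \cong \R^2$. Then $|w|^2 = |v_\parallel|^2 + |u|^2$ and $|v-w| = |u - v_\perp|$, so the quantity to bound becomes
\[
	\int_{\R^2} \big(\langle v_\parallel\rangle^2 + |u|^2\big)^{-m/2}\, |u - v_\perp|^{\gamma+2s+1}\, du,
\]
while the target is $\langle v\rangle^{\gamma+2s+1} \approx (\langle v_\parallel\rangle + |v_\perp|)^{\gamma+2s+1}$. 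Next I would split the $u$-integral according to whether $|u - v_\perp|$ is comparable to, smaller than, or larger than $\langle v\rangle$, with a further split on the size of $|u|$ relative to $\langle v_\parallel\rangle$. On the piece where $u$ is near $v_\perp$, the factor $|u - v_\perp|^{\gamma+2s+1}$ is locally integrable — here one uses $\gamma + 2s + 1 > -2$, which holds because $\gamma > -3$ and $s > 0$ — and, together with the size of the weight on that piece, this produces exactly $\langle v\rangle^{\gamma+2s+1}$; on the complementary far pieces the decay $\langle w\rangle^{-m}$ takes over, and the resulting two-dimensional integrals converge thanks to $m > 3 + \gamma + 2s$ (with $m$ large in every application anyway).

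The only delicate point is keeping track of exponents when $\gamma + 2s + 1 < 0$, so that the target power $\langle v\rangle^{\gamma+2s+1}$ is genuinely a decay rate. In that regime one must not bound $\big(\langle v_\parallel\rangle^2 + |u|^2\big)^{-m/2}$ by its value at $u = 0$ when $|v_\perp|$ is large, since that throws away the decay; instead one retains the weight on the range $|u| \lesssim |v_\perp|$ and uses $\int_{\R^2} (\langle v_\parallel\rangle^2 + |u|^2)^{-m/2}\, du \lesssim \langle v_\parallel\rangle^{2-m} \le 1$, which is where the size of $m$ is used. No individual estimate is hard; the main obstacle is organizing the case analysis so that every piece is dominated by $\langle v\rangle^{\gamma+2s+1}$.
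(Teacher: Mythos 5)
Your argument is correct and is precisely the proof the paper has in mind: the paper omits it, saying only that the lemma is "obvious from \eqref{e.carleman2} and a straightforward parametrization of the 2-dimensional hyperplane," which is exactly what you carry out. Your case analysis — in particular retaining the weight in the regime $\gamma+2s+1<0$ with $|v_\perp|$ large, and invoking $\gamma+2s+1>-2$ for local integrability on the plane and $m>3+\gamma+2s$ for convergence at infinity — correctly supplies the omitted details.
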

We omit the proof as this is obvious from~\eqref{e.carleman2} and a straightforward parametrization of the 2-dimensional hyperplane that is the domain of integration.

On the other hand, under a smallness condition on $v'$, we can establish a refined estimate involving the decay of $g$.   To our knowledge this was first observed in \cite[equation~(4.39)]{HST_boltz_wp} but not stated as a stand-alone lemma or given a proof in complete generality.  As such, we include it here.
\begin{lemma}
	\label{l: w0524}
		Fix any $m>3 + \gamma + 2s$, $\theta \in (0,1)$,  $g \in L^{\infty, m}$, and $v,v'\in \mathbb R^3$ with $(1-\theta)|v| \geq |v'|$.  Then
		\[
			|K_g(v,v')|
			\lesssim
			\frac{1}{|v-v'|^{3+2s}}\|g\|_{L^{\infty, m}}
			\langle v \rangle^{\gamma+2s+3 - m}.
		\]
\end{lemma}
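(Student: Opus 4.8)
The plan is to parametrize the two-dimensional hyperplane $v + (v'-v)^\perp$ over which the integral defining $K_g$ in \eqref{e.carleman2} is taken, and use the decay bound $g(w) \lesssim \|g\|_{L^{\infty,m}} \vv^{-m}$ pointwise, but keeping track of the fact that $w$ lives far from the origin. The key geometric observation is the following: if $w \in v + (v'-v)^\perp$, then $w - v \perp v' - v$, and under the hypothesis $(1-\theta)|v| \geq |v'|$ the point $v$ is "far" from the origin relative to the displacement $v'$. Writing $w = v + z$ with $z \perp (v'-v)$, I first want to establish a lower bound of the form $|w| = |v + z| \gtrsim \langle v \rangle + |z|$, or at least $|w| \gtrsim \langle v\rangle$ together with a mild growth in $|z|$ that can be absorbed.

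First I would reduce to this geometric claim. From \eqref{e.carleman2},
\[
	|K_g(v,v')|
		\lesssim \frac{\|g\|_{L^{\infty,m}}}{|v-v'|^{3+2s}}
			\int_{w \in v + (v'-v)^\perp} \langle w\rangle^{-m} |v-w|^{\gamma+2s+1} \, dw
	= \frac{\|g\|_{L^{\infty,m}}}{|v-v'|^{3+2s}}
			\int_{z \perp (v'-v)} \langle v+z\rangle^{-m} |z|^{\gamma+2s+1} \, dz.
\]
The integral is over a $2$-plane, and $|z|^{\gamma+2s+1}$ is locally integrable there precisely because $\gamma + 2s + 1 > -2$ (which holds since $\gamma > -3$). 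So the small-$|z|$ part is harmless. For the large-$|z|$ part we need the weight $\langle v+z\rangle^{-m}$ to provide decay; here is where I would split into $|z| \leq |v|/2$ and $|z| > |v|/2$. On the first region, $|v+z| \geq |v| - |z| \geq |v|/2$, so $\langle v+z\rangle^{-m} \lesssim \langle v\rangle^{-m}$, and we pull this constant out, leaving $\int_{|z|\leq |v|/2,\, z\perp(v'-v)} |z|^{\gamma+2s+1}\, dz \lesssim \langle v\rangle^{\gamma+2s+3}$ (a $2$-dimensional integral of $|z|^{\gamma+2s+1}$ over a disk of radius $\sim|v|$). Combining gives exactly the claimed factor $\langle v\rangle^{\gamma+2s+3-m}$. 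On the second region $|z| > |v|/2$, we have $|z|^{\gamma+2s+1} \lesssim \langle v\rangle^{\gamma+2s+1} \langle z\rangle^{\text{(sign-dependent)}}$... this needs a touch of care depending on the sign of $\gamma+2s+1$, but in all cases $\langle v + z\rangle^{-m}$ with $m$ large dominates, and the integral $\int_{|z|>|v|/2} \langle v+z\rangle^{-m}|z|^{\gamma+2s+1}\,dz$ over the $2$-plane is bounded by $\langle v\rangle^{\gamma+2s+3-m}$ as well (again using $m > 3+\gamma+2s$ for convergence at infinity, and on this region $|v+z|$ and $|z|$ and $\langle v\rangle$ are comparable up to the usual triangle-inequality slack).

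The main obstacle — and the only place the hypothesis $(1-\theta)|v| \geq |v'|$ is actually used — is making the splitting at $|z| \sim |v|$ legitimate when the plane $v + (v'-v)^\perp$ could in principle pass close to the origin. A priori, the closest point of that plane to the origin is the orthogonal projection of $0$ onto it, whose distance is $|v|\,|\sin\angle(v, v'-v)|$, and this could be small if $v$ is nearly parallel to $v'-v$. However, $v$ nearly parallel to $v-v'$ forces $|v'|$ to be either very small or comparable to $2|v|$; the constraint $(1-\theta)|v|\geq |v'|$ rules out the latter, and in the former case $v' \approx 0$ so $v'-v \approx -v$ and the plane is $\{w : w \perp v\} + v$, whose distance to the origin is $|v|$ — large. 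So the hypothesis guarantees $\dist(0, v+(v'-v)^\perp) \gtrsim_\theta \langle v\rangle$, which is precisely what licenses $\langle v+z\rangle^{-m} \lesssim \langle v\rangle^{-m}$ for $|z|$ up to a constant times $|v|$. I would state and prove this elementary geometric lower bound on the distance as the technical heart of the argument, then the rest is the routine $2$-dimensional integral estimate sketched above.
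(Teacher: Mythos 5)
Your overall architecture matches the paper's proof: parametrize the plane as $w=v+z$ with $z\in(v'-v)^\perp$, insert the pointwise bound $|g(v+z)|\leq\|g\|_{L^{\infty,m}}\langle v+z\rangle^{-m}$, and split the two-dimensional integral at $|z|\sim|v|$, with the hypothesis $(1-\theta)|v|\geq|v'|$ entering only through a lower bound on $|v+z|$. The paper establishes this lower bound in the sharper form $|v+z|^{2}\geq\theta\left(|v|^{2}+|z|^{2}\right)$, which makes both regions of the split immediate.

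However, your justification of the geometric step --- which you correctly identify as the only place the hypothesis is used --- is wrong as written. The distance from the origin to the plane $v+(v'-v)^\perp$ is the component of $v$ along the \emph{normal} direction, namely $|v\cdot(v'-v)|/|v'-v|=|v|\,|\cos\angle(v,v'-v)|$, not $|v|\,|\sin\angle(v,v'-v)|$. Consequently the degenerate configuration is $v$ nearly \emph{perpendicular} to $v'-v$ (equivalently $v\cdot v'\approx|v|^{2}$), not nearly parallel; your case analysis (``nearly parallel forces $|v'|$ small or comparable to $2|v|$'') addresses the wrong configuration and is moreover false in itself: $v'=v/2$ makes $v-v'$ exactly parallel to $v$ while $|v'|=|v|/2$ is neither small nor comparable to $2|v|$. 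The conclusion you want, $\dist(0,v+(v'-v)^\perp)\gtrsim_\theta |v|$, is nevertheless true, and the correct one-line argument is the computation the paper uses: by Cauchy--Schwarz and the hypothesis, $v\cdot(v'-v)=v\cdot v'-|v|^{2}\leq(1-\theta)|v|^{2}-|v|^{2}=-\theta|v|^{2}$, while $|v'-v|\leq(2-\theta)|v|$, so the distance is at least $\tfrac{\theta}{2-\theta}|v|$. With this substituted in (or, better, with the stronger estimate $|v+z|^{2}\geq\theta(|v|^{2}+|z|^{2})$, proved by the same orthogonality-plus-Young computation), your two-region estimate of the planar integral goes through, using $m>3+\gamma+2s$ for convergence at infinity and $\gamma+2s+1>-2$ near $z=0$, and yields the stated bound.
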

As the proof of \Cref{l: w0524} is longer than the others of this subsection, we include it in \Cref{s:collision_proofs}; however, it is simply a more careful writing of the ideas in the proof of \cite[equation~(4.39)]{HST_boltz_wp}.

The next lemma provides estimates for the non-singular part $Q_\ns$. Recall \eqref{e.carleman2}. Then, we have the following estimates.
\begin{lemma}\label{p:non-singular}
	Suppose that $f,g : \T^3 \times \R^3 \to \R$. Then, for any $\epsilon>0$ and $n\geq 0$,
	\[
	\begin{split}
	\| Q_\ns (g,f) \|_{L^{2,n}}
	&\lesssim
	\begin{cases}
	\|g\|_{L^{\infty,3+\gamma+\epsilon}} \|f\|_{L^{2,n}}\\
	\|g\|_{L^{2,n}} \|f\|_{L^{\infty,n + \epsilon + 3/2 +\gamma + (3/2-n)_+}}.
	\end{cases}
	\end{split}
	\]
\end{lemma}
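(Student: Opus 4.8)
The non-singular part is $Q_\ns(g,f) = c_b (S_\gamma * g) f$ with $S_\gamma(v) = |v|^\gamma$, so the entire estimate reduces to understanding the pointwise factor $(S_\gamma * g)(v) = \int g(v-w)|w|^\gamma\,dw = \int g(z)|v-z|^\gamma\,dz$. The strategy is: first prove a pointwise bound $|(S_\gamma * g)(v)| \lesssim (\text{norm of } g)$ that is either $v$-independent or carries an explicit $\vv$-weight, and then simply multiply by $f$ and take the $L^{2,n}$ norm in $v$ (and integrate trivially in $x$, since everything is pointwise in $x$). Since $\gamma \in (-3,0]$, the kernel $|w|^\gamma$ is locally integrable near the origin but not at infinity, so the two bounds correspond to two different ways of splitting the integral based on which factor — $g$ or $f$ — is asked to supply the decay.

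For the first bound, I would split $(S_\gamma * g)(v) = \int_{|v-z|\le 1} + \int_{|v-z|>1}$. On the near region, $|v-z|^\gamma$ is integrable (using $\gamma > -3$) and we bound $|g(z)| \le \|g\|_{L^\infty}\vv^{-0}$; more carefully, writing $|g(z)| = \vv^{-(3+\gamma+\epsilon)}\cdot(\vv^{3+\gamma+\epsilon}|g(z)|) \le \|g\|_{L^{\infty,3+\gamma+\epsilon}}\vv^{-(3+\gamma+\epsilon)}$ and noting $\vv \gtrsim 1$, the near part is $\lesssim \|g\|_{L^{\infty,3+\gamma+\epsilon}}\int_{|v-z|\le 1}|v-z|^\gamma\,dz \lesssim \|g\|_{L^{\infty,3+\gamma+\epsilon}}$. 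On the far region $|v-z|>1$, we have $|v-z|^\gamma \le 1$ (as $\gamma\le 0$), so this part is $\lesssim \int_{|v-z|>1}|g(z)|\,dz \le \|g\|_{L^{\infty,3+\gamma+\epsilon}}\int \langle z\rangle^{-(3+\gamma+\epsilon)}\,dz$; but $3+\gamma+\epsilon > 3$ is false in general when $\gamma < 0$ — so instead one should compare $\langle z\rangle$ with $\langle v\rangle$ and $|v-z|$ more carefully, or simply observe that on $|v-z|>1$ the contribution is weaker and handled by the weight $3+\gamma+\epsilon$ only if that exceeds $3$, which forces a more delicate argument. The cleanest route is the standard one: for $\gamma\in(-3,0]$ and any $\eta>0$ one has $\int |v-z|^\gamma \langle z\rangle^{-3-\gamma-\eta}\,dz \lesssim 1$ uniformly in $v$ (this is a routine convolution-of-power-weights estimate, splitting into $|z|\le |v|/2$, $|z-v|\le|v|/2$, and the remaining region), giving $\|S_\gamma * g\|_{L^\infty}\lesssim \|g\|_{L^{\infty,3+\gamma+\epsilon}}$; then $\|Q_\ns(g,f)\|_{L^{2,n}} = \|(S_\gamma*g)\,\vv^n f\|_{L^2} \le \|S_\gamma*g\|_{L^\infty}\|f\|_{L^{2,n}}$.

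For the second bound, the roles reverse: we keep $g$ in $L^{2,n}$ and make $f$ (equivalently $S_\gamma$, through a Hölder argument) supply all the decay. Here I would write $|(S_\gamma*g)(v)| \le \int |g(z)|\,|v-z|^\gamma\,dz$ and apply Cauchy–Schwarz splitting off a weight: $\le \|g\|_{L^{2,n}}\big(\int \langle z\rangle^{-2n}|v-z|^{2\gamma}\,dz\big)^{1/2}$. The inner integral converges near $z=v$ iff $2\gamma > -3$, which need not hold; so instead one localizes: on $|v-z|\le 1$ use $|v-z|^\gamma \in L^{6/5}_{\rm loc}$-type integrability paired with $g\in L^2$ via Hölder with a gentler exponent, and on $|v-z|>1$ use $|v-z|^\gamma\le 1$. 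The exponent bookkeeping — tracking where the $n + \epsilon + 3/2 + \gamma + (3/2-n)_+$ comes from — is precisely the point: the $3/2+\gamma$ is the loss from Cauchy–Schwarz against $|v-z|^{\gamma}$ over a unit ball in $\R^3$ (since $|\cdot|^\gamma\in L^2(B_1)$ needs $2\gamma>-3$, and the deficit is absorbed into the weight), the $\epsilon$ is slack, and the $(3/2-n)_+$ corrects for the case $n<3/2$ where $\langle z\rangle^{-2n}\notin L^1$. Then again $\|Q_\ns(g,f)\|_{L^{2,n}} = \|\vv^n(S_\gamma*g)f\|_{L^2}$, and one moves the needed $\vv$-weight onto $f$: $\vv^n |(S_\gamma*g)(v)|\,|f(v)| \lesssim \langle v\rangle^{n-?}\|g\|_{L^{2,n}}\,|f(v)|$, and the residual power of $\langle v\rangle$ together with the weight needed to make the $z$-integral converge gets absorbed by $\|f\|_{L^{\infty, n+\epsilon+3/2+\gamma+(3/2-n)_+}}$, after which the $L^2$ integral in $v$ is finite because $f$'s weight beats $\langle v\rangle^{-3-}$.

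**Main obstacle.** There is no deep difficulty here — both estimates are consequences of the elementary convolution inequality for products of negative powers of $\langle\cdot\rangle$ and $|\cdot|^\gamma$ in $\R^3$. The only thing requiring care is the exponent accounting in the second bound: verifying that the weight $n+\epsilon+3/2+\gamma+(3/2-n)_+$ on $f$ is exactly what is needed to (i) make the Cauchy–Schwarz $z$-integral $\int\langle z\rangle^{-2n}|v-z|^{2\gamma}\,dz$ (suitably localized) finite and control its growth in $\langle v\rangle$, (ii) supply enough decay that the final $L^2_v$ integral converges, and (iii) handle the low-$n$ regime via the $(3/2-n)_+$ term. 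I expect this exponent-chasing, rather than any analytic subtlety, to be where essentially all the work lies, and it is genuinely routine.
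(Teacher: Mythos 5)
Your treatment of the first inequality is correct and is exactly the paper's (one-line) argument: the whole content is the uniform bound $\sup_v \int |v-z|^\gamma \langle z\rangle^{-3-\gamma-\epsilon}\,dz \lesssim 1$, which holds by the three-region splitting you describe, and then $\|Q_\ns(g,f)\|_{L^{2,n}} \le \|S_\gamma * g\|_{L^\infty}\|f\|_{L^{2,n}}$. No issues there.

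For the second inequality your overall strategy (weighted Cauchy--Schwarz on $S_\gamma * g$ against $\|g\|_{L^{2,n}}$, split near/far from the diagonal) is the same as the paper's, which packages it as the weighted Young's inequality \Cref{l: Weighted Young} with $\eta=\gamma$ and $\ell = 3/2+\gamma+\epsilon+(3/2-n)_+$. However, both of the concrete mechanisms you propose for the two regions fail as stated. (a) Near the diagonal, ``H\"older with a gentler exponent paired with $g\in L^2$'' is not a valid pairing: to bound $\int |g|\,A$ with $g\in L^2$ you need $A\in L^2$, i.e.\ $2\gamma>-3$, and pairing $L^2$ with $L^{6/5}$ does not bound an $L^1$ integral. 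The device that actually works is the weighted Cauchy--Schwarz $\bigl(\int |g|A\bigr)^2 \le \bigl(\int |g|^2 A\bigr)\bigl(\int A\bigr)$, which only ever sees the integrable first power of $|v-z|^\gamma$, followed by Fubini to convert $\int g^2 A$ into $\|g\|_{L^{2,n}}^2$ times a power of $\vv$; this is exactly the estimate of $I_1$ in the proof of \Cref{l: Weighted Young}. (b) On the far region, discarding the kernel via $|v-z|^\gamma\le 1$ loses the estimate: the factor $\vv^{\gamma}$ one gains there from $|v-z|\gtrsim \vv$ (the paper takes the far region to be $B_{|v|/10}(v)^c$, not $B_1(v)^c$, precisely for this purpose) is the source of the $\gamma$ in the weight $n+\epsilon+3/2+\gamma+(3/2-n)_+$ on $f$. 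Without it, the pointwise bound on the far contribution is merely $\lesssim \|g\|_{L^{2,n}}$ with no decay in $\vv$, and then $\|\vv^n (S_\gamma*g) f\|_{L^2}\lesssim \|g\|_{L^{2,n}}\|\vv^n f\|_{L^2}$ requires a weight strictly larger than $n+3/2$ on $f$, which the stated weight is not when $\gamma+\epsilon<0$. So the final $L^2_v$ integral would not converge with the claimed weight; your exponent attribution (placing the $3/2+\gamma$ loss in the near region) is correspondingly off. Both points are repairable, but they are the substance of the proof rather than routine bookkeeping.
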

\begin{remark}
	The first inequality in \Cref{p:non-singular} is obvious by writing $Q_\ns(g,f) = (S*g) f$ and bounding $S*g$ in $L^\infty$ using the weighted $L^\infty$ norm of $g$.  The second inequality can be easily proved by using our weighted Young's inequality \Cref{l: Weighted Young}.  As this proof is straightforward from the statement of \Cref{l: Weighted Young}, we omit the details.
\end{remark}

We also require the following from \cite[Lemma 2.6]{HST_boltz_wp}:
\begin{lemma}[Interpolation lemma]\label{interpolation lemma}
	If $n,m \geq 0$, $k' \in (0,k)$, and $l<(m-\frac32)(1-\frac{k'}{k})+n\frac{k'}{k}$, then
	\[
		\|f\|_{H^{k',l}}
	\lesssim
	\|f\|_{L^{\infty, m}}^{1-\frac{k'}{k}}
	\|f\|_{H^{k, n}}^{\frac{k'}{k}}
	.
	\]
\end{lemma}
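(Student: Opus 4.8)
The plan is to reduce \Cref{interpolation lemma} to a purely $L^2$-based weighted Gagliardo--Nirenberg inequality, which I would then prove by hand. Set $\theta := k'/k \in (0,1)$ and recall that $\|f\|_{H^{k',a}}^2 \approx \sum_{|\beta|\leq k'}\|\langle v\rangle^{a}\partial^{\beta}f\|_{L^2}^2$. First I would trade the $L^{\infty,m}$ input for an $L^2$ input with a slightly smaller weight: for any $m' < m - 3/2$, so that $\langle v\rangle^{2(m'-m)}$ is integrable on $\R^3$ (and using that $\T^3$ has finite measure),
\[
	\|f\|_{L^{2,m'}}^2
		= \int_{\T^3}\int_{\R^3}\langle v\rangle^{2(m'-m)}\big(\langle v\rangle^{m}|f|\big)^2\,dv\,dx
		\lesssim \|f\|_{L^{\infty,m}}^2 .
\]
Since $l < (1-\theta)(m-3/2)+\theta n$, continuity lets us fix $m' < m-3/2$ with $l < (1-\theta)m'+\theta n$; and since $\langle v\rangle \geq 1$ the weighted norm $\|f\|_{H^{k',a}}$ is nondecreasing in $a$, so $\|f\|_{H^{k',l}}\leq \|f\|_{H^{k',(1-\theta)m'+\theta n}}$. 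It is thus enough to establish, for all $p,q\geq 0$,
\be\label{e:wgn}
	\|f\|_{H^{k',\,(1-\theta)p+\theta q}}
		\lesssim \|f\|_{L^{2,p}}^{1-\theta}\,\|f\|_{H^{k,q}}^{\theta},
\ee
and then take $p = m'$, $q = n$ and combine with the previous display.

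For integer $k'$ I would prove \eqref{e:wgn} by iterating a ``log-convexity with error'' estimate: given a multi-index $\beta$, a unit multi-index $e$ with $e \leq \beta$, and $a\geq 0$, integration by parts (in $x$ or in $v$; there are no boundary terms) gives, with $\beta_- := \beta - e$ and $\beta_+ := \beta + e$,
\[
	\big\|\langle v\rangle^{a}\partial^{\beta}f\big\|_{L^2}^2
		= -\int \langle v\rangle^{2a}\,\partial^{\beta_+}f\,\partial^{\beta_-}f
			\;-\;\int \partial^{e}\!\big(\langle v\rangle^{2a}\big)\,\partial^{\beta}f\,\partial^{\beta_-}f .
\]
By Cauchy--Schwarz the first term is at most $\|\langle v\rangle^{a_-}\partial^{\beta_-}f\|_{L^2}\|\langle v\rangle^{a_+}\partial^{\beta_+}f\|_{L^2}$ for any split $a_-+a_+=2a$, while the second vanishes when $e$ is an $x$-direction and otherwise obeys $|\partial^e\langle v\rangle^{2a}| \lesssim \langle v\rangle^{2a-1}\leq \langle v\rangle^{2a}$, hence has one fewer derivative than $\partial^\beta$ and is of strictly lower order. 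Iterating this, with the weight splittings arranged so that a term of derivative order $j$ at weight $(1-\theta)p+\theta q$ is driven toward the two endpoints, namely order $0$ at weight $p$ (contributing $\|f\|_{L^{2,p}}$) and order $k$ at weight $q$ (contributing $\|f\|_{H^{k,q}}$), with the interpolation exponents $1-\theta$ and $\theta$, and then summing over $|\beta|\leq k'$ by Cauchy--Schwarz for sums, yields \eqref{e:wgn}.

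The main obstacle is precisely the combinatorial bookkeeping of this iteration: arranging the weight splits and the sequence of derivatives removed and added so that every intermediate quantity is dominated by $\|f\|_{L^{2,p}}^{1-\theta}\|f\|_{H^{k,q}}^{\theta}$ with the exact exponents, and checking that all weight-differentiation remainders are absorbed (automatic from $|\partial^\alpha\langle v\rangle^{r}|\lesssim\langle v\rangle^{r}$, but it must be tracked). A slicker alternative, valid for every real $k'\in(0,k)$, is to identify the weighted Sobolev spaces as a complex interpolation scale, $[L^{2,p},H^{k,q}]_\theta = H^{k',\,(1-\theta)p+\theta q}$, whence \eqref{e:wgn} becomes the standard inequality $\|f\|_{[A_0,A_1]_\theta}\lesssim\|f\|_{A_0}^{1-\theta}\|f\|_{A_1}^{\theta}$; this is, in spirit, the argument behind \cite[Lemma 2.6]{HST_boltz_wp}, which one could alternatively cite directly. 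In every case \eqref{e:wgn} is an a priori estimate, justified for $f\in L^{\infty,m}\cap H^{k,n}$ by a routine truncation-and-mollification approximation.
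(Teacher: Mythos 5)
The paper does not actually prove this lemma—it cites \cite[Lemma 2.6]{HST_boltz_wp}—and your argument follows the same skeleton as that reference: trade the $L^{\infty,m}$ factor for an $L^{2,m'}$ factor with $m'<m-\tfrac32$ using the integrability of $\langle v\rangle^{2(m'-m)}$ on $\R^3$, then invoke the weighted $L^2$--$H^k$ interpolation inequality. Your write-up is correct; the only points to tidy are that the core interpolation inequality should be stated for all real weight exponents $p$ (your reduction can force $m'<0$ when $m\le \tfrac32$), and that, since the paper applies the lemma with non-integer $k'$, you must in the end rely on the standard complex-interpolation identity $[L^{2,p},H^{k,q}]_\theta=H^{k\theta,(1-\theta)p+\theta q}$ for power weights rather than on the integer-order integration-by-parts iteration whose bookkeeping you leave unfinished.
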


\subsection{New estimates}

We now state new estimates on the collision operator that are crucial to allowing us to extend well-posedness to the full range of soft potentials.  The prior similar work \cite{HST_boltz_wp} relied heavily on \cite[Theorem 2.4, Proposition 2.5, and Proposition 3.1]{HST_boltz_wp}.  The first two come directly from \cite[Proposition 2.9 and 2.8]{amuxy2011qualitative}, respectively.  Each result, unfortunately, requires $\gamma + 2s > -\frac{3}{2}$.  Thus, these are not applicable in our setting, and the main issue of the present work is to obtain suitable replacements, which we state here.
%
%

The first is a commutator estimate (cf.\ \cite[Proposition~2.8]{amuxy2011qualitative}, \cite[Proposition~2.5]{HST_boltz_wp}). 
\begin{proposition}[Commutator estimate]\label{p:commutator}
	For any $\epsilon>0$, $\gamma \in (-3,0]$, 
	$\mu \in ((1-2s)_+, 2-2s)$, $m > \max\{3+\gamma+2s, \ell + \gamma + \frac{3}{2}\}$, $\ell > \frac{3}{2}$, and $f,g : \R^3\to\R$, we have
		\begin{align*}
		\| \vv^\ell Q_\s(g,f) - Q_\s(g, \vv^\ell f)\|_{L^2}
		\lesssim
			(\|f\|_{L^{2,\ell+3/2+\epsilon}} + \|f\|_{H^{2s -1 + \mu,\mu+\ell+\gamma + 2s}})
				\|g\|_{L^{\infty, m}}.
		\end{align*}
\end{proposition}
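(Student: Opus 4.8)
The plan is to exploit the integral representation of $Q_\s$ in~\eqref{e.carleman1} and write the commutator as an integral against the kernel $K_g$ of a difference that isolates the weight. Concretely,
\[
	\vv^\ell Q_\s(g,f)(v) - Q_\s(g, \vv^\ell f)(v)
		= \int \big( \vv^\ell - \vvp^\ell \big) f(v') K_g(v,v') \dd v',
\]
so the integrand carries the factor $\vv^\ell - \vvp^\ell$, which vanishes to first order at $v'=v$. The strategy is the standard one for nonlocal commutators: split the $v'$-integral into the near region $B_r(v)$ and the far region $B_r(v)^c$, where $r = r(v) \approx \vv$ will be chosen proportional to $\vv$ so that on $B_r(v)$ one stays in the regime where $\vvp \approx \vv$.

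\emph{Near region.} On $B_r(v)$ with $r \approx \vv$, Taylor expand: $\vv^\ell - \vvp^\ell = -\nabla(\vvp^\ell)\cdot(v'-v) + O\big(\vvp^{\ell-2}|v'-v|^2\big)$, with the understanding that $\vvp \approx \vv$ throughout this region. By the symmetry of $K_g$ in~\eqref{e.carleman2} (equivalently \Cref{p.w3311}.\eqref{l.is3.7}), the first-order term integrates to zero, leaving the quadratic term. Applying \Cref{l:new kernel} with $\alpha = 2$ (legitimate since $\mu < 2-2s$ forces $2 > 2s$ when $s<1$, and here we just need $2 > 2s$) gives
\[
	\Big| \int_{B_r(v)} (\vv^\ell - \vvp^\ell) f(v') K_g(v,v') \dd v' \Big|
		\lesssim \|\vv^{\ell-2+\delta} f\|_{L^\infty_{\mathrm{loc}}} \, \vv^{\ell-2} r^{2-2s} \!\int g(w)|v-w|^{\gamma+2s}\dd w,
\]
and bounding $\int g(w)|v-w|^{\gamma+2s}\dd w \lesssim \|g\|_{L^{\infty,m}}\vv^{\gamma+2s}$ (valid for $m>3+\gamma+2s$, cf.\ the reasoning behind \Cref{l:2.3}), together with $r \approx \vv$, yields a pointwise bound by $\vv^{2\ell - 2s + \gamma - \tfrac{3}{2} - \epsilon'} |f(v)|$-type quantities; after taking $L^2_v$ this is absorbed into $\|f\|_{L^{2,\ell+3/2+\epsilon}}\|g\|_{L^{\infty,m}}$ once the weight bookkeeping is done — here is where the $\epsilon$ and the exact power $\ell+3/2+\epsilon$ get pinned down. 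More carefully, to get the clean $L^2$ statement one keeps $f(v')$ under the integral rather than pulling out a sup, and uses the Schur-type bound: the operator with kernel $\vv^\ell |v'-v|^2 K_g(v,v')\mathbf 1_{B_r(v)}$, divided by appropriate weights, maps $L^2 \to L^2$ with the stated norm, which is exactly the kind of estimate \Cref{l:new kernel} is built to feed.

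\emph{Far region.} On $B_r(v)^c$ the difference is crudely bounded, $|\vv^\ell - \vvp^\ell| \lesssim \vv^\ell + \vvp^\ell$, and one splits again according to whether $|v'| \leq \tfrac12|v|$ or not. When $|v'| \le \tfrac12 |v|$ we have $|v-v'|\approx \vv$ and we use the decay estimate \Cref{l: w0524} on $K_g(v,v')$, gaining the crucial $\vv^{\gamma+2s+3-m}$; choosing $m$ large (this is the role of the hypothesis $m > \ell + \gamma + \tfrac32$, suitably strengthened) makes this term harmlessly integrable and controlled by $\|f\|_{L^{2,\ell+3/2+\epsilon}}\|g\|_{L^{\infty,m}}$. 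When $|v'| > \tfrac12|v|$ (so $v'$ is large), $\vvp \approx \vv$ fails to help but now $\vvp$ is comparable to its own large size; we bound $\vv^\ell \lesssim \vvp^\ell$ there and estimate $\int_{|v-v'|>r} \vvp^\ell |f(v')| K_g(v,v')\dd v'$ using \Cref{l:2.3} for $K_g$ together with a fractional-integration/Hardy–Littlewood–Sobolev argument, or directly via the $H^{2s-1+\mu,\mu+\ell+\gamma+2s}$-norm of $f$ — this is precisely the origin of the second term on the right-hand side. The exponent $2s-1+\mu$ is what makes the singular kernel $|v-v'|^{-3-2s}$, which after integrating $|v-v'|^2$ over the near region behaves like a $2s-2$ order operator but in the far/rough region like order $2s$, land at the fractional order $2s-1+\mu$ once a $\mu$-order difference quotient of $f$ is exploited; the constraint $\mu > (1-2s)_+$ guarantees $2s-1+\mu > 0$ so that this is a genuine (nonnegative-order) Sobolev norm, and $\mu < 2-2s$ keeps the order below $1$ so the Taylor expansion in the near region remains valid.

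\emph{Main obstacle.} The delicate point is the transition region $|v'|\approx |v|$ with $|v-v'| \gtrsim \vv$: here neither ``$\vvp\approx\vv$ so Taylor-expand'' nor ``$v'$ small so $K_g$ decays'' applies cleanly, and one must genuinely trade regularity of $f$ (the $H^{2s-1+\mu}$ norm) against the nonlocal kernel. Getting the weights to close — i.e.\ verifying that the $\mu+\ell+\gamma+2s$ weight on the Sobolev norm and the $\ell+3/2+\epsilon$ weight on the $L^2$ norm are exactly what the kernel estimates of \Cref{p.w3311}, \Cref{l:new kernel}, \Cref{l:2.3}, and \Cref{l: w0524} produce after the annular sums — is the bulk of the work, and is where the precise lower bound on $m$ in terms of $\ell$, $\gamma$, $s$ is forced.
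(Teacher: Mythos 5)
Your overall architecture (near/far splitting at scale $r\approx\vv$, cancellation of the first-order Taylor term, \Cref{l:new kernel} for the quadratic remainder, \Cref{l: w0524} for the far region with $|v'|\ll|v|$) matches the paper's proof, which runs the same splitting in dual form against a test function $h\in L^2$. However, there is a genuine gap in your near-region argument. You Taylor-expand $\vv^\ell-\vvp^\ell$ while keeping $f(v')$ under the integral and then claim the first-order term vanishes by the cancellation property \Cref{p.w3311}.\eqref{l.is3.7}. That lemma gives $\int_{B_r(v)}(v'-v)K_g(v,v')\,dv'=0$, which kills $\int_{B_r(v)}\nabla W_\ell(v)\cdot(v'-v)\,\varphi\,K_g\,dv'$ only when $\varphi$ is independent of $v'$; with $f(v')$ present the first-order term
\[
\int_{B_r(v)}\nabla W_\ell(v)\cdot(v'-v)\,f(v')\,K_g(v,v')\,dv'
\]
does \emph{not} vanish. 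It is an operator of order $2s-1$ acting on $f$, which for $s\geq 1/2$ cannot be absorbed into weighted $L^2$ norms of $f$. The fix — and the step your proposal is missing — is to first write $f(v')\,\delta W_\ell = f(v)\,\delta W_\ell - \delta f\,\delta W_\ell$ with $\delta\psi=\psi(v)-\psi(v')$. The piece with $f(v)$ admits your Taylor argument verbatim (first order cancels, second order handled by \Cref{l:new kernel} with $\alpha=2$). The piece $\delta f\,\delta W_\ell$ carries \emph{two} difference factors; after Cauchy--Schwarz against the kernel and the Taylor bound $|\delta W_\ell|\lesssim|v-v'|\vv^{\ell-1}$, it produces a Gagliardo-type double integral of $\delta f$ at order $2s-1+\mu$, which \Cref{p03} controls by $\|f\|_{H^{2s-1+\mu,\mu+\ell+\gamma+2s}}$. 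This is precisely where the constraint $\mu\in((1-2s)_+,2-2s)$ enters.

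Relatedly, your attribution of the $H^{2s-1+\mu}$ term to the far/transition region via a Hardy--Littlewood--Sobolev or fractional-integration argument is off target. In the far region $|v-v'|\gtrsim\vv$ the kernel is not singular: the paper handles all of $\mathcal D^c$ (including the transition region $|v'|\approx|v|$) by Cauchy--Schwarz in $v'$ together with the pointwise kernel bounds \Cref{l:2.3} and \Cref{l: w0524}, yielding only the weighted $L^2$ norms $\|f\|_{L^{2,\ell+3/2+\epsilon}}$ and $\|f\|_{L^{2,\ell}}$; no Sobolev regularity of $f$ is consumed there. The Sobolev term in the statement comes entirely from the near-region double-difference term described above.
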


The next estimates concern $Q_\s(g,f)$ and involve only the $v$-variable.   
\begin{proposition}\label{p:thm_2.4}
	For any $f,g : \R^3 \to \R$, $\gamma \in (-3,0]$, $\gamma + 2s \leq 0$, and $\epsilon>0$:
	\begin{enumerate}[(i)]
		\item If 
			$\theta \in (0, 2s)$, then
		\[
			\int Q_\s(g,f) h dv
				\lesssim \|g\|_{L^{\infty,3+\gamma+2s+\epsilon}}
					\|f\|_{H^{2s-\theta}}
					\|h\|_{H^{\theta}}.
		\]
		\item If $\theta > 0$, then
		\[
			\|Q_\s(g,f)\|_{L^2}
				\lesssim \|g\|_{L^{\infty,3+\gamma+2s+\epsilon}}
						\|f\|_{H^{2s+\theta}}.
		\]
		\item If $n>3/2+\gamma+2s$, $m>3/2+\gamma+(3/2-n)_{+}$, and $\alpha>2s$, then
			\[
			\|Q_\s(g,f)\|_{L^{2,n}}
				\lesssim
				\|g\|_{L^{2,n}}
				\left(
					\|f\|_{L^{\infty, m}}
					+\|\vv^{n+5/2+ (3/2-n)_+ + \alpha+\gamma+\epsilon}f\|_{C^{\alpha}_{v}}
				\right).
			\]
			\item If $n\geq 0$ and $m > n + 6 + \gamma + 2s$, we have
			\[
			\int \vv^{2n}fQ_\s(g,f)\,dv
			\lesssim
			\|g\|_{L^{\infty,m}}
			\|f\|_{L^{2,n}}^{2} 
			.
			\]
	\end{enumerate}
\end{proposition}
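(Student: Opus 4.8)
The plan is to estimate the quadratic form $\int \vv^{2n} f\, Q_\s(g,f)\,dv$ by exploiting the structure of the Carleman kernel $K_g$. Writing $F = \vv^n f$, the natural first move is to pass from $\int \vv^{2n} f\, Q_\s(g,f)\,dv$ to an expression in $F$ alone, up to commutator errors. That is, I would write
\[
	\int \vv^{2n} f\, Q_\s(g,f)\,dv
		= \int F\, \big(\vv^n Q_\s(g,f)\big)\,dv
		= \int F\, Q_\s(g,F)\,dv
			+ \int F\, \big(\vv^n Q_\s(g,f) - Q_\s(g,\vv^n f)\big)\,dv,
\]
and control the commutator term via Cauchy–Schwarz together with \Cref{p:commutator} (which costs $\|g\|_{L^{\infty,m}}$ for $m$ large and gives a factor like $\|f\|_{L^{2,n+3/2+\epsilon}} + \|f\|_{H^{\cdots}}$; one then interpolates this using \Cref{interpolation lemma} to absorb the small amount of regularity into a constant times $\|f\|_{L^{2,n}}$, paying with a larger weight $m$, which is why $m > n + 6 + \gamma + 2s$ rather than merely $m > 3 + \gamma + 2s$). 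Actually, to keep the weight requirement sharp it may be cleaner to estimate the commutator directly from the kernel bounds of \Cref{p.w3311}, splitting $\vv^n f(v') - \vv^n f(v) = \vv^n(f(v')-f(v)) + (\vv^n - \vv^{n}{}')f(v')$ and treating the second piece, which is a genuine difference of weights, using $|\vv^n - \vvp^n| \lesssim |v-v'|(\vv^{n-1} + \vvp^{n-1})$ on nearby annuli and boundedness of the weight ratio far out; I expect this to reproduce an $\|f\|_{L^{2,n}}^2\|g\|_{L^{\infty,m}}$ bound directly without needing interpolation.

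The heart of the matter is therefore the symmetric form $\int F\, Q_\s(g,F)\,dv = \iint (F(v') - F(v)) F(v) K_g(v,v')\,dv'\,dv$. Here I would use the standard symmetrization trick for integro-differential operators with a symmetric kernel: swapping $v \leftrightarrow v'$ and averaging, together with the symmetry $K_g(v,v'-v) = K_g(v,v-v')$ from \eqref{e.carleman2}, one gets
\[
	\int F\, Q_\s(g,F)\,dv
		= -\tfrac12 \iint (F(v')-F(v))^2 K_g(v,v')\,dv'\,dv
			+ \tfrac12 \iint F(v)^2 \big(K_g(v',v) - K_g(v,v')\big)\,dv'\,dv.
\]
The first term is manifestly nonpositive (the Dirichlet-form-type term) and can simply be discarded — this is exactly the "good sign at a maximum" philosophy mentioned in the introduction, now in $L^2$ form. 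The second term is the genuine estimate: by \Cref{p.w3311}\eqref{l.is3.6}, $\big|\int (K_g(v',v) - K_g(v,v'))\,dv'\big| \lesssim \int |g(z)||z-v|^\gamma\,dz \lesssim \|g\|_{L^{\infty,m}}\vv^{\gamma}$ for $m$ large enough (since $\gamma > -3$ ensures $\int |z-v|^\gamma \langle z\rangle^{-m}\,dz \lesssim \langle v\rangle^\gamma$ when $m > 3$), so the second term is bounded by $\|g\|_{L^{\infty,m}} \int \vv^\gamma F(v)^2\,dv \leq \|g\|_{L^{\infty,m}}\|F\|_{L^2}^2 = \|g\|_{L^{\infty,m}}\|f\|_{L^{2,n}}^2$, using $\gamma \leq 0$.

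The main obstacle I anticipate is bookkeeping the weight $m$: each time one commutes $\vv^n$ past $Q_\s$, or estimates a convolution $S_\gamma * g$ or $|z-v|^\gamma * (\text{weight})$, one pays in the required decay of $g$, and one must also handle the small regularity loss in the commutator estimate \Cref{p:commutator} (which involves an $H^{2s-1+\mu,\cdot}$ norm of $f$) by interpolating back down to $L^{2,n}$ via \Cref{interpolation lemma} — this interpolation is only permitted when the target weight is strictly below a threshold, forcing $m$ up to $n + 6 + \gamma + 2s$. A secondary subtlety is making the formal symmetrization rigorous given that $Q_\s$ is really a principal value; but as the paper already notes, all the relevant integrals are over symmetric annuli around $v$, so the principal value is harmless and the symmetrization is legitimate. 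No new ideas beyond \Cref{p.w3311}, \Cref{p:commutator}, and \Cref{interpolation lemma} should be needed.
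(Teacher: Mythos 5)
Your skeleton --- set $F=\vv^n f$, split off the weight commutator $\int F\,\bigl(\vv^nQ_\s(g,f)-Q_\s(g,F)\bigr)\,dv$, and symmetrize $\int F\,Q_\s(g,F)\,dv$ into a Dirichlet form plus an antisymmetric remainder controlled by \Cref{p.w3311}.\eqref{l.is3.6} --- matches the paper's, and your treatment of the symmetric part is fine. The gap is in the commutator term, which is where the real work is, and both of your proposed routes for it fail. Route one (apply \Cref{p:commutator} and interpolate) cannot work: \Cref{p:commutator} produces $\|f\|_{H^{2s-1+\mu,\cdot}}$ with $2s-1+\mu>0$, and \Cref{interpolation lemma} bounds an \emph{intermediate} Sobolev norm by $L^{\infty,m}$ together with a \emph{higher} $H^{k,n}$ norm; it cannot convert a positive-order Sobolev norm of $f$ into $\|f\|_{L^{2,n}}$, which is the only norm of $f$ permitted on the right-hand side of (iv). (It also requires $\ell>3/2$, while (iv) allows any $n\ge0$.) Route two (bound the commutator integrand directly via $|\vv^n-\vvp^n|\lesssim|v-v'|\,\vv^{n-1}$ near the diagonal) only tames the singularity to $|v-v'|^{-2-2s}$, which is integrable over $B_r(v)$ in $\R^3$ (equivalently, \Cref{l:new kernel} applies with $\alpha=1$) only when $s<1/2$, whereas the proposition covers all $s\in(0,1)$ with $\gamma+2s\le0$. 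To go further one needs the second-order Taylor expansion of the weight together with the cancellation $\int_{B_r(v)}(v'-v)K_g(v,v')\,dv'=0$, but in the commutator the linear term is multiplied by $f(v')$, which is not constant in $v'$, so that cancellation is not directly available.

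This is exactly why the paper does \emph{not} discard the Dirichlet form. It splits the commutator as $\int(F(v)-F(v'))f(v')K_g(\vv^n-\vvp^n)+\int F(v')f(v')K_g(\vv^n-\vvp^n)$; Young's inequality turns the first piece into $\tfrac12\iint(F-F')^2K_g$ --- absorbed by half of the retained, nonpositive Dirichlet form --- plus $\tfrac12\iint f(v')^2K_g(\vv^n-\vvp^n)^2$, where the \emph{square} of the weight difference supplies $|v-v'|^2$ with $2>2s$ for every $s\in(0,1)$. In the second piece all the $v'$-dependence outside the kernel sits in $F(v')f(v')$, so the kernel can be integrated in $v$ first and the cancellation \Cref{p.w3311}.\eqref{l.is3.72} applies to the linear Taylor term. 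Finally, in the far region $|v|\ge 10|v'|$ the weight ratio is \emph{not} bounded ($\vv^n-\vvp^n\approx\vv^n$); that region is handled with the refined decay estimate \Cref{l: w0524} and is the true source of the hypothesis $m>n+6+\gamma+2s$, not the interpolation step you invoke.
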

The first two parts above, (i) and (ii), rely heavily on the work in \cite{IS2020weakharnack}; however, that reference is focused on local estimates and, as such, is not concerned with understanding the dependence on weights.  Combined they are a replacement for~\cite[Theorem~2.4]{HST_boltz_wp} (see also \cite[Proposition~2.9]{amuxy2011qualitative}).  The second two parts above, (iii) and (iv), are new.  They are replacements for \cite[Proposition~3.1.(i) and (iii)]{HST_boltz_wp}, respectively.

We make two brief remarks.  First, the result (i) is a slight generalization of the results in~\cite{IS2020weakharnack} as it allows to choose $\theta$ in (i).  Second, the result (ii) almost certainly holds without $\theta=0$; however, as this is not needed for our purposes and the current statement is easy to derive from~\cite{IS2020weakharnack}, we are content to use (ii) as is.

The final estimate makes use of symmetry properties of $Q_\s$ in order to avoid having more than one full derivative ``land'' on $f$.  This is crucial in case two of the proof of the main {\em a priori} estimate \Cref{p:existence}.  It is a replacement for \cite[Proposition~3.1.(iv)]{HST_boltz_wp}.

\begin{proposition}\label{p:symmetry}  
	Suppose that $f,g : \T^3 \times \R^3 \to \R$. If $\gamma \in (-3,0]$, $\epsilon>0$, $\mu \in ((1-2s)_+,2-2s)$, $\kappa\in(s, \min\{2s,1\})$, $n>\frac{3}{2}$, and $m > \max\{3+\gamma+2s, n + \gamma + \frac{3}{2}\}$. Then
	\[
	\begin{split}
	\Big|\int &\vv^{2n} Q_\s (g,f) \partial f dv dx \Big|
	\lesssim
	\|g\|_{L^{\infty, m}}
	\left(
	\|f\|_{L^{2,n + 3/2 + \epsilon}}
	+
	\|f\|_{H^{2s - 1 + \mu,\mu+n +\gamma + 2s}}
	\right)
	\|f\|_{H^{1,n}}
	\\&+
	\|\partial g\|_{H^{3/2 + (2s-1/2)_+ + \epsilon,3+\gamma + 2s + \epsilon}} \|f\|_{H^{1,n}}^2
		+
	\| g\|_{C^{\kappa, 3+\epsilon}}
	\|f\|_{H^{s,n + 3/2 + \epsilon + (\gamma + 2s + 1)_+}}
	\|f\|_{H^{1, n}}
	,
	\end{split}
	\]
	where $\partial = \partial_{x_i}$ or $\partial_{v_i}$ for some $i \in \{1,2,3\}$.
\end{proposition}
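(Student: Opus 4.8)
The plan is to exploit the symmetry $K_g(v,v+v')=K_g(v,v-v')$ from \eqref{e.carleman2} to reorganize $\int \vv^{2n} Q_\s(g,f)\,\partial f\,dv\,dx$ so that no more than one full derivative falls on $f$. First I would write, using the definition $Q_\s(g,f)(v)=\int (f(v')-f(v))K_g(v,v')\,dv'$, the weighted pairing
\[
	\int \vv^{2n} Q_\s(g,f)(v)\,\partial f(v)\,dv
		= \int\!\!\int \bigl(f(v')-f(v)\bigr)\,\vv^{2n}\partial f(v)\,K_g(v,v')\,dv'\,dv.
\]
The standard move is to symmetrize in $v \leftrightarrow v'$. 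The obstruction is that the weight $\vv^{2n}$ and the test function $\partial f(v)$ are not symmetric; so I would split $\partial f(v) = \tfrac12(\partial f(v)+\partial f(v')) + \tfrac12(\partial f(v) - \partial f(v'))$ and $\vv^{2n}$ similarly. On the antisymmetric-in-$(\partial f)$ piece, writing $(f(v')-f(v))(\partial f(v)-\partial f(v')) = -\tfrac12\bigl[(\partial(f^2)(v')-\partial(f^2)(v)) - \text{(cross terms that are perfect squares)}\bigr]$ — more precisely using $2(a-b)(a'-b') $-type identities — one produces a term of the form $\int\!\!\int (\partial(f^2)(v)-\partial(f^2)(v'))K_g\,dv'\,dv$ which, after integrating by parts in $x$ (or recognizing it as $\int \vv^{2n}Q_\s(g, \partial(f^2))$-like), is controlled by the commutator-type estimates; and a genuinely good bilinear-in-$\partial f$ term $-\tfrac12\int\!\!\int (\partial f(v)-\partial f(v'))^2 K_g\,dv'\,dv \le 0$ which we discard. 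The remaining pieces, where $\partial f$ appears with a symmetric combination $\partial f(v)+\partial f(v')$, are handled by Cauchy–Schwarz: the factor $|f(v')-f(v)|\,K_g^{1/2}$ pairs against $\bigl(\text{weight differences}\bigr)K_g^{1/2}$, and the $K_g$-weighted quadratic forms are estimated by \Cref{p:thm_2.4}(i)–(ii) and \Cref{l:new kernel}, producing the $\|f\|_{L^{2,n+3/2+\epsilon}} + \|f\|_{H^{2s-1+\mu,\mu+n+\gamma+2s}}$ factor times $\|f\|_{H^{1,n}}$, exactly the first line of the claimed bound.

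Next I would account for the commutator between the weight $\vv^{2n}$ and $Q_\s$. Writing $\vv^{2n}Q_\s(g,f) = Q_\s(g,\vv^{2n}f) + [\vv^{2n}Q_\s(g,f) - Q_\s(g,\vv^{2n}f)]$, the bracketed commutator is estimated by \Cref{p:commutator} (with $\ell = 2n$), yielding the term with $\|f\|_{L^{2,\ell+3/2+\epsilon}} + \|f\|_{H^{2s-1+\mu,\mu+\ell+\gamma+2s}}$ absorbed into the first line, while $\int Q_\s(g,\vv^{2n}f)\,\partial f\,dv\,dx$ is the quantity one controls via the symmetrization above with $h=\partial f$. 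The genuinely new terms — those in the second line of the statement, involving $\|\partial g\|_{H^{\cdots}}$ and $\|g\|_{C^{\kappa,3+\epsilon}}$ — arise when a derivative $\partial$ is moved off $f$ and onto the kernel $K_g$, since $K_g$ depends on $g$; here I would use that $\partial_{v} K_g$ or $\partial_x K_g$ is itself a kernel of Carleman type with $g$ replaced by $\partial g$ (up to the singular homogeneity), so $\int (f(v')-f(v))(\partial_v K_g)(v,v') f(v)\,dv'\,dv$ is controlled by $K_{\partial g}$-weighted quadratic forms in $f$ — again via \Cref{l:new kernel} and \Cref{p.w3311} — contributing $\|\partial g\|_{H^{3/2+(2s-1/2)_++\epsilon,3+\gamma+2s+\epsilon}}\|f\|_{H^{1,n}}^2$. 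The low-regularity $C^{\kappa}$ term appears from the part of $K_g$ that must be handled via a Hölder seminorm of $g$ in the intermediate annular scales (scales near $|v-v'|\sim 1$), where one trades $\kappa > s$ regularity of $g$ against $\|f\|_{H^{s,\cdots}}$; I would localize the $v'$-integral to $|v-v'|\le 1$ and $|v-v'|\ge 1$ separately and on the near-diagonal piece bound $|K_g(v,v')-K_g(v,2v-v')|$-type differences using the $\kappa$-Hölder continuity of $g$ along the hyperplane.

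The main obstacle I anticipate is bookkeeping the weights carefully enough that every quadratic form that appears genuinely matches the hypotheses of \Cref{p:thm_2.4}(i)–(ii) — in particular keeping the total weight below the threshold $m > \max\{3+\gamma+2s, n+\gamma+\tfrac32\}$ when one uses \Cref{l:2.3} or \Cref{l: w0524} to bound $K_g$ on the far-field region $|v'| \gg |v|$, and making sure the commutator weight $\ell = 2n$ still satisfies $m > \ell + \gamma + \tfrac32$, which is exactly why the hypothesis on $m$ is stated as it is. A secondary technical point is justifying the symmetrization rigorously given that $Q_\s$ is only defined in principal value; as remarked after \eqref{e.carleman2}, all the cancellations take place over symmetric domains $|v-v'|\le r$ around the base point, so the principal-value limit passes through and the formal manipulations are licit. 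Once the weight accounting is pinned down, each individual piece reduces to one of the lemmas in \Cref{s:collision} or to an elementary Cauchy–Schwarz estimate, so the proof is long but not conceptually deep beyond the symmetrization idea.
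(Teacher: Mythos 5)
Your overall architecture (commutator for the weight, symmetrization so that at most one derivative lands on $f$, a kernel-derivative term producing $\|\partial g\|$, and a H\"older-in-$g$ term from intermediate scales) matches the paper's, but two concrete steps would fail as written.

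First, you commute the full weight $\ell = 2n$ through $Q_\s$ and invoke \Cref{p:commutator}, which requires $m > 2n + \gamma + \tfrac32$; the proposition only assumes $m > n + \gamma + \tfrac32$, so your hypothesis on $m$ is strictly stronger than the one you are allowed (your closing remark that the stated hypothesis is "exactly" $m > \ell + \gamma + \tfrac32$ with $\ell = 2n$ is not correct). The paper instead splits $\vv^{2n} = \vv^{n}\cdot\vv^{n}$, commutes only one factor ($\ell = n$, which is what the hypothesis covers), and writes the main term as $\int Q_\s(g,F)\,\partial F$ with $F = \vv^{n}f$ (plus a lower-order piece from $\partial(\vv^n f) - \vv^n\partial f$, handled by \Cref{p:thm_2.4}.(i)). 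This even splitting is also what makes the symmetrization legitimate: both slots must contain the \emph{same} function $F$ for the pre-post change of variables to produce a clean square, whereas your version pairs $Q_\s(g,\vv^{2n}f)$ against the unweighted $\partial f$.

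Second, the algebraic identity at the heart of your symmetrization is wrong. The term $\int K_g\,(F'-F)(\partial F - (\partial F)')$ is bilinear in differences of $F$ and differences of $\partial F$; it does not decompose into a $\partial(f^2)$-difference plus a sign-definite square $-\tfrac12\int K_g\,(\partial f(v)-\partial f(v'))^2 \le 0$ — that expression has the wrong homogeneity (quadratic in $\partial f$) and there is nothing to discard. The correct identity is $(F'-F)\bigl(\partial F - (\partial F)'\bigr) = -\tfrac12(\partial + \partial')(F'-F)^2$; integrating by parts in $v$ and $v'$ moves $(\partial+\partial')$ onto the kernel, and $(\partial+\partial')K_g = K_{\partial g}$ by the translation structure of the Carleman kernel. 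The resulting quadratic form $\int K_{\partial g}(F'-F)^2$ is then bounded by \Cref{l:w0513} (not by a sign argument), which is what produces the $\|\partial g\|_{H^{3/2+(2s-1/2)_++\epsilon,\,3+\gamma+2s+\epsilon}}\|f\|_{H^{1,n}}^2$ term after H\"older in $x$ and Sobolev embedding. You do correctly anticipate that the remaining kernel-difference piece $\int (K_g - K_g')(F'-F)\partial F$ is where the $\|g\|_{C^{\kappa,3+\epsilon}}$ factor enters; the paper handles that piece by citing the corresponding estimate from \cite{HST_boltz_wp}.
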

Recall that we prove the above estimates in \Cref{s:collision_proofs}.

\section{Existence and uniqueness of solutions: \Cref{t.main}}\label{s:lwp}  
	In this section, we prove \Cref{t.main}.   The majority of the work is in the proof of existence and our approach for this follows \cite{HST_boltz_wp} closely.  Indeed, the construction procedure is similar, relying on exhibiting a solution to a suitably regularized and linearized problem.  We then use compactness to deregularize and a fixed point argument to pass from the linearized problem to the nonlinear one.  The main novelty to the current work as compared to~\cite{HST_boltz_wp} is in the establishment of {\em a priori} estimates in $Y_T^{k,n,m}$ of the regularized and linearized problem.  When possible, we omit details that are unchanged from~\cite{HST_boltz_wp}.

	
\subsection{Proof of existence in \Cref{t.main}}
	First, we define a smooth cut-off function $\psi : \mathbb R^3 \rightarrow \mathbb R$ with $0\leq \psi \leq 1 $, $\int \psi(v)\,dv=1$, 
	\begin{align*}
	\psi = 1,
	\quad\text{on}
	~
	B_{1/2}
	\qquad
	\text{and}
	\qquad
	\psi = 0
	\quad
	\text{on}
	~
	B_{1}^c
	.
	\end{align*}
	Next, for any $\phi: \mathbb T^{3} \times \mathbb R^3 \rightarrow \mathbb R$ and $\epsilon >0$, we define
	\begin{align*}
	\phi^{\epsilon} (x,v)
	=
	\frac{1}{\epsilon^6} \int \psi \Big(\frac{x-y}{\epsilon}\Big)
			\psi \Big(\frac{v-w}{\epsilon}\Big) \phi(y,w)\,dydw
	.
	\end{align*}
	Then we define the regularized collision operator, for any $\delta>0$ and $(x,v) \in \mathbb T^{3} \times \mathbb R^{3}$,
	\begin{align*}
	Q_{\epsilon, \delta} (g(x,\cdot), f(x, \cdot))(v)
	=
	\psi(\delta v)Q(g^{\epsilon} (x,\cdot), \psi(\delta \cdot)f(x,\cdot))
	.
	\end{align*}
	Finally, for any $\sigma \in [0,1]$, we define the differential operator
	\begin{align}\label{e.w451}
	\mathcal{L}_{\sigma, \epsilon, \delta} (f)
	=
	\partial_{t}f
	+
	\sigma \psi(\delta v)v \cdot \nabla_{x}f
	-(\epsilon +(1-\sigma))\Delta_{x,v}f
	-\sigma Q_{\epsilon, \delta} (g,f)
	.
	\end{align}
	The intuition for the above regularizations and cut-offs is given in~\cite[Section 3]{HST_boltz_wp}.
	
	We now establish a priori estimates that hold for both the full equation and the regularized one
	above. This is done in the following proposition.
		\begin{proposition}\label{p:existence}
			Suppose that $T>0, k\geq 5, n>3/2+(\gamma+2s)_{+}, \sigma \in [0,1], \epsilon, \delta\geq0$, and $m\geq 0$. Suppose that $R,f \in Y_{T}^{k,n,m}$
			\begin{equation}\label{e.w316}
			\begin{cases}
			\mathcal L_{\sigma, \epsilon, \delta}f 
			=R
			, \qquad &\text{in}~(0,T)\times \mathbb{T}^{3}\times \mathbb{R}^{3}
			\\f(0,\cdot, \cdot)
			=f_{\text{in}}, \qquad &\text{in}~ \mathbb{T}^{3}\times \mathbb{R}^{3}.
			\end{cases}
			\end{equation}
			 For any $\mu>0$, if $\delta=0$ and $m\geq 3/2+\mu$ or if $\delta>0$, then
			\begin{equation}\label{e:w05081}
			\begin{split}
			\|f\|_{L^{\infty, m}}
			&\leq
			\exp\Big\{C\int_{0}^{T} \|g(t)\|_{L^{\infty, \max\{m,3/2+\mu\}}}dt\Big\}
			\Big(\|f_{\rm in}\|_{L^{\infty, m}}
			+
			\int_{0}^{T} \|R(t)\|_{L^{\infty, m}}dt
			\Big)
			.
			\end{split}
			\end{equation}
			If $\delta=0$ and $m$ is sufficiently large depending on $k,n,\gamma$, and $s$, then
			\begin{equation}\label{e:w05082}
			\begin{split}
			\|f\|_{L^{\infty, m}([0,T];H_{x,v}^{k,n})}
			&\leq
			\exp\Big\{C\int_{0}^{T} \|g(t)\|_{X_{x,v}^{k,n,m}}\,dt\Big\}
			\Big((1+T)\|f_{\rm in}\|_{X_{x,v}^{k,n,m}}
			+
			\int_{0}^{T} \|R(t)\|_{X_{x,v}^{k,n,m}}dt
			\Big)
			.
			\end{split}
			\end{equation}
		\end{proposition}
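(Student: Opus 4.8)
The plan is to establish the two estimates \eqref{e:w05081} and \eqref{e:w05082} via energy-type (maximum-principle and $L^2$-based) arguments, using the Carleman decomposition together with the collision estimates of \Cref{s:collision} to absorb the collision terms into lower-order quantities controlled by a Gr\"onwall loop.

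\textbf{The $L^{\infty,m}$ bound \eqref{e:w05081}.} First I would follow the comparison-principle argument sketched in the introduction (and carried out in \cite[Proposition~3.2]{HST_boltz_wp}). Set $h = \vv^m f$ and suppose the maximum of $|h|$ over $[0,t]\times\T^3\times\R^3$ (made rigorous using the weight decay or the cutoff $\psi(\delta\cdot)$, which is exactly why the hypothesis splits into the $\delta=0$, $m\ge 3/2+\mu$ case and the $\delta>0$ case) is attained at an interior point $(t_0,x_0,v_0)$. At that point the transport term $\sigma\psi(\delta v)v\cdot\nabla_x f$ and the regularizing term $-(\eps+(1-\sigma))\Delta_{x,v}f$ have favorable sign (or vanish), so they can be dropped. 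For the collision term, write $Q_{\eps,\delta}(g,f) = Q_\s + Q_\ns$. At the maximum of $h$, the singular part, after commuting the weight through via \eqref{e.carleman1}--\eqref{e.carleman2}, reads $\vv^m Q_\s(g^\eps,\psi(\delta\cdot)f) = \int (h(v') - h(v_0)) K^{(m)}_{g^\eps}(v_0,v')\,dv' + (\text{weight-commutator})\cdot h$, where the integral term is $\le 0$ because $h(v_0)$ is the max and $K\ge 0$; the weight-commutator term is bounded by $\|g^\eps\|_{L^{\infty,\max\{m,3/2+\mu\}}} |h(v_0)|$ using \Cref{l:2.3} and \Cref{l: w0524} (this is where $m\ge 3/2+\mu$ enters when $\delta=0$). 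The non-singular part is $c_b (S_\gamma * g^\eps) f$, and $\|S_\gamma * g^\eps\|_{L^\infty}\lesssim \|g^\eps\|_{L^{\infty,3+\gamma+\eps}}\lesssim\|g\|_{L^{\infty,\max\{m,3/2+\mu\}}}$ since $m$ is large. Hence at the maximum $\tfrac{d}{dt}|h| \le C\|g(t)\|_{L^{\infty,\max\{m,3/2+\mu\}}} |h| + \|R(t)\|_{L^{\infty,m}}$, and Gr\"onwall's inequality gives \eqref{e:w05081}. One must also note $\|g^\eps\|_{L^{\infty,p}}\lesssim\|g\|_{L^{\infty,p}}$ uniformly in $\eps$.

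\textbf{The $H^{k,n}$ bound \eqref{e:w05082}.} Here I would differentiate \eqref{e.w316} by $\partial^\alpha = \partial_{x,v}^\alpha$ with $|\alpha|\le k$, multiply by $\vv^{2n}\partial^\alpha f$, and integrate in $x,v$. The transport term $\sigma\psi(\delta v)v\cdot\nabla_x\partial^\alpha f$ contributes a term that is either zero (integration by parts in $x$) or, from commuting $\partial^\alpha$ past $\psi(\delta v)v\cdot\nabla_x$, lower order and controlled by $\|f\|_{H^{k,n}}^2$ (the $v$-weight is harmless because of $\psi(\delta v)$ when $\delta>0$; when $\delta=0$ we are in the $\sigma$-loop where transport is present but the commutator with $\vv^{2n}$ costs one $v$-power on one factor, absorbable into $\|f\|_{H^{k,n}}$ if $n$ vs.\ the weight budget is arranged — actually when $\delta=0$ one keeps $\sigma\psi(\delta v)\equiv\sigma$, so the transport commutator with $\partial^\alpha$ vanishes and with $\vv^{2n}$ it is $\lesssim\|f\|^2_{H^{k,n}}$). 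The diffusion term gives $-(\eps+(1-\sigma))\|\nabla_{x,v}\partial^\alpha(\vv^n f)\|_{L^2}^2 + (\text{good})$, which we discard (it has the right sign). The crux is the collision term $\int \vv^{2n}\partial^\alpha f\,\partial^\alpha Q_\s(g,f)\,dv\,dx$ plus the analogous $Q_\ns$ term. Expand $\partial^\alpha Q_\s(g,f) = \sum_{\beta\le\alpha}\binom{\alpha}{\beta} Q_\s(\partial^\beta g,\partial^{\alpha-\beta}f)$. The top term $\beta=0$ is handled by \Cref{p:thm_2.4}(iv) after moving one $\vv^n$ inside using the commutator \Cref{p:commutator}: $\int\vv^{2n}\partial^\alpha f\, Q_\s(g,\partial^\alpha f) = \int \vv^n\partial^\alpha f\, \vv^n Q_\s(g,\partial^\alpha f) = \int \vv^n\partial^\alpha f\, Q_\s(g,\vv^n\partial^\alpha f) + (\text{commutator}\le \text{(a)})$, and the first piece is $\lesssim\|g\|_{L^{\infty,m}}\|\partial^\alpha f\|_{L^{2,n}}^2$ by \Cref{p:thm_2.4}(iv). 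The delicate sub-case is when exactly one derivative lands on $f$, i.e.\ terms of the form $\int\vv^{2n}\partial^\alpha f\, Q_\s(\partial^{\alpha-1}g,\partial f)\,dv\,dx$ with $\partial$ one derivative — for these one invokes \Cref{p:symmetry} (with $f\rightsquigarrow\partial^{\alpha-1}$-reduced objects) to exploit cancellation and avoid losing two full derivatives. Lower-order $\beta$ terms ($1\le|\beta|\le|\alpha|$, $|\alpha-\beta|\le k-2$) are bounded crudely by \Cref{p:thm_2.4}(iii) or (ii)/(i) combined with the interpolation \Cref{interpolation lemma}, trading the extra regularity for the large weight $m$ (this is precisely why $m$ must be large depending on $k,n,\gamma,s$). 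The $Q_\ns$ contribution is estimated by \Cref{p:non-singular}. Summing over $|\alpha|\le k$ and also carrying along the $|\alpha|=0$ weighted $L^2$ estimate, one obtains $\tfrac{d}{dt}\|f\|_{H^{k,n}}^2 \le C\|g(t)\|_{X^{k,n,m}}\,\|f\|_{H^{k,n}}^2 + C\|R(t)\|_{X^{k,n,m}}\|f\|_{H^{k,n}}$, where the $L^{\infty,m}$ pieces of $f$ appearing inside the collision estimates are controlled by \eqref{e:w05081} and folded into the exponential; Gr\"onwall then yields \eqref{e:w05082}, the $(1+T)$ factor arising from the $\|R\|\|f\|$ (as opposed to $\|R\|^2$) form of the forcing term after a Young's inequality.

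\textbf{Main obstacle.} The hard part is the bookkeeping of weights and regularity in the intermediate terms $Q_\s(\partial^\beta g,\partial^{\alpha-\beta}f)$ for $1\le|\beta|\le|\alpha|$: one must check that every application of \Cref{p:thm_2.4}, \Cref{p:commutator}, \Cref{p:symmetry}, and \Cref{interpolation lemma} closes, i.e.\ that the Sobolev index demanded on $f$ is $\le k$ (using $k\ge 5$) and that the weight index demanded is $\le m$ for $m$ chosen large enough — and simultaneously that the single term where a full extra derivative would otherwise fall on $f$ is exactly the one covered by the symmetry estimate \Cref{p:symmetry}. A secondary technical point is justifying the maximum-principle argument for \eqref{e:w05081} rigorously (existence of the maximizer, or an approximation by $\vv^{m-\eps'}$-weights) in the $\delta=0$ case; this is routine given $m\ge 3/2+\mu$ and mirrors \cite[Proposition~3.2]{HST_boltz_wp}.
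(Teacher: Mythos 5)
Your overall architecture matches the paper's: a maximum-principle argument for \eqref{e:w05081} (the paper simply imports this from \cite[Proposition~3.1]{HST_boltz_wp}) and, for \eqref{e:w05082}, a weighted energy estimate with a Leibniz expansion of $Q$ and a case analysis on how the $k$ derivatives distribute between $g$ and $f$, closed by Gr\"onwall. However, two concrete steps in your treatment of the collision term would not go through as written.

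First, you have the delicate case backwards. The term requiring \Cref{p:symmetry} is the one with \emph{one} derivative on $g$ and $k-1$ derivatives on $f$, i.e.\ $\int \vv^{2n} Q_\s(\partial g, F)\,\partial F$ with $F=\partial^{k-1}f$: a direct $L^2$ bound via \Cref{p:thm_2.4}(ii) would require $F\in H^{2s+\theta}$, hence $f\in H^{k-1+2s+\theta,n}$, which exceeds $H^{k,n}$ once $s\geq 1/2$; the symmetry of $K_g$ is exploited precisely so that the testing function $\partial F$ never forces more than $k$ derivatives onto $f$. The term you single out, $Q_\s(\partial^{k-1}g,\partial f)$, carries almost all derivatives on $g$ and is one of the \emph{easy} cases, handled crudely by \Cref{p:thm_2.4}(iii) together with Sobolev embedding and \Cref{interpolation lemma} (this is where the large weight $m$ is spent). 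If you apply the symmetry estimate to the wrong term, the genuinely hard term is left uncontrolled for $s\geq 1/2$.

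Second, your route for the top-order term ($\beta=0$) does not close. You propose to commute $\vv^n$ through $Q_\s$ via \Cref{p:commutator} and then apply \Cref{p:thm_2.4}(iv) to the weighted quantity. But \Cref{p:commutator} applied to $\partial^\alpha f$ with $|\alpha|=k$ produces the norm $\|\partial^\alpha f\|_{H^{2s-1+\mu,\cdot}}$ with $\mu>(1-2s)_+$, so $2s-1+\mu>0$ always, and this demands $f\in H^{k+2s-1+\mu,\cdot}$ — strictly more regularity than $H^{k,n}$ provides. The point of \Cref{p:thm_2.4}(iv) is that it is stated \emph{with the weight $\vv^{2n}$ already inside} and costs only $\|f\|_{L^{2,n}}^2\|g\|_{L^{\infty,m}}$; its proof absorbs the weight commutator differently, by symmetrizing and using the cancellation $\int_{B_r(v)}(v'-v)K_g(v,v')\,dv'=0$ of \Cref{p.w3311}(iii) rather than the $H^{2s-1+\mu}$ commutator bound. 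So apply (iv) directly to $\int\vv^{2n}\partial^\alpha f\,Q_\s(g,\partial^\alpha f)$; do not route through \Cref{p:commutator} at top order. (The commutator estimate is instead used in the intermediate cases, e.g.\ two derivatives on $g$ and $k-2$ on $f$, where the loss $2s-1+\mu$ on a $(k-2)$-th derivative is affordable.) With these two corrections your outline coincides with the paper's proof.
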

	Now, we prove \Cref{p:existence}.  The proof follows that of \cite[Proposition~3.1]{HST_boltz_wp} with small changes due to the new estimates on the collision operator necessary in our setting.

	\begin{proof}
	The argument of \eqref{e:w05081} goes exactly as that in \cite[Proposition 3.1]{HST_boltz_wp} and hence we omit the proof here. Now we focus on proving \eqref{e:w05082}. First,  we let $\alpha, \beta \in \mathbb{N}_{0}^3$ be any multi-indices such that $|\alpha|+|\beta|=k$. Then, differentiating \cref{e.w451}, multiplying the resulting equation by $\vv^{2n}\partial_{x}^{\alpha}\partial_{v}^{\beta}f$, integrating in $x$ and $v$, we get
	\[
	\begin{split}
	\frac{1}{2}\frac{d}{dt}\int |\vv^{n}\partial_{x}^{\alpha}\partial_{v}^{\beta}f|^{2}\,dxdv
	&=
	-\sigma \int \left(\sum_{i=1}^{3}\beta_{i}\partial_{x_i}\partial_{x}^{\alpha}\partial_{v}^{\beta-e_{i}}f \right)\vv^{2n}\partial_{x}^{\alpha}\partial_{v}^{\beta}f\,dxdv
	\\&\quad+
	\sigma \sum_{\substack{\alpha'+\alpha''=\alpha \\ \beta'+\beta''=\beta}} C_{\alpha',\beta',\alpha'',\beta''} \int Q(\partial_{x}^{\alpha'}\partial_{v}^{\beta'}g,\partial_{x}^{\alpha'}\partial_{v}^{\beta'}f)\vv^{2n}\partial_{x}^{\alpha}\partial_{v}^{\beta}f\,dxdv
	\\&\quad-
	(\epsilon + 1-\sigma)\int |\nabla_{x,v}\partial_{x}^{\alpha}\partial_{v}^{\beta}f|^{2}
	+
	\int \partial_{x}^{\alpha}\partial_{v}^{\beta}R\vv^{2n}\partial_{x}^{\alpha}\partial_{v}^{\beta}f\,dxdv
	\\&=
	I_{1}+I_{2}+I_{3}+I_{4},
	\end{split}
	\]
	for some constants $C_{\alpha',\beta',\alpha'',\beta''}>0$ depending only on the subscripted quantities.
	
	We see that $I_{1}$ is bounded by $\|f\|_{H^{k,n}}^2$, $I_{4}$ is bounded by  $
	\|R\|_{H^{k,n}}^{2}+\|f\|_{H^{k,n}}^{2}$, and $I_{3}$ has a good sign. Thus, our focus is primarily on $I_{2}$, the term involving the collision operator $Q$. We argue case by case depending on the size of $|\alpha''| + |\beta''|$ in order to establish that	\be\label{e.w05091}
	\begin{split}
	I_{2}
	\lesssim			
	\|g\|_{X^{k,n,m}}
	\|f\|_{X^{k,n,m}}^{2}
	.
	\end{split}
	\ee
	The proof of~\eqref{e.w05091} is postponed momentarily while we show how to conclude.  Indeed,
%
%
	assuming \eqref{e.w05091} is proved, we arrive at
	\begin{equation}
	\begin{split}
		\frac{1}{2}\frac{d}{dt}\int |\vv^{n}\partial_{x}^{\alpha}\partial_{v}^{\beta}f|^{2}\,dxdv
		&\lesssim
		(\|g\|_{X^{k,n,m}}+1)\|f\|_{X^{k,n,m}}^{2}
		+
		\|R\|_{H^{k,n}}^{2}
		.
	\end{split}
	\end{equation}
	Recalling the definition of $X^{k,n,m}$ in~\eqref{d:X space} and using~\eqref{e:w05081}, we find
	\begin{equation}
	\begin{split}
	\frac{1}{2}\frac{d}{dt}\|f\|_{H^{k,n}}^{2}
	&\lesssim
		(\|g(t)\|_{X^{k,n,m}}+1)
			\|f\|_{H^{k,n}}^{2}\\
		&\quad +(\|g(t)\|_{X^{k,n,m}}+1)
			\exp\left\{C\int_0^T\|g(t)\|_{X^{k,n,m}} dt\right\}
			\left(\|f_{\rm in}\|_{L^{\infty, m}}^2
			+ \|R\|_{X^{k,n,m}}^2\right)
		.
	\end{split}
	\end{equation}
	Therefore, we conclude the proof of equation \eqref{e:w05082} by applying Gr\"onwall inequality. 
	
	We now establish \eqref{e.w05091}.	For notational ease, we set
	\begin{align}
	F=\partial^{\alpha''}_{x}\partial^{\beta''}_{v}f
	\comma
	G=\partial^{\alpha'}_{x}\partial^{\beta'}_{v}g
	.
	\end{align}
	Thus, we are estimating terms of the form
	\be\label{e.Weinans_decomposition}
		\begin{split}
		\int \vv^{2n} &Q(G,F) \partial^{\alpha'}_x \partial_v^{\beta'} F dv dx\\
			&= \int \vv^{2n} Q_\s(G,F) \partial^{\alpha'}_x \partial_v^{\beta'} F dv dx
				+ \int \vv^{2n} Q_\ns(G,F) \partial^{\alpha'}_x \partial_v^{\beta'} F dv dx.
		\end{split}
	\ee

	\smallskip
	
	\textbf{Case one:  $|\alpha''|+|\beta''|= k$, i.e., $\alpha''=\alpha$, $\beta''=\beta$, and in the form of $\int \vv^{2n}Q(g,F)F$.}\\
	We estimate the $Q_\s$ term first. We proceed by using \Cref{p:thm_2.4}.(iv), up to increasing $m$ if necessary,
	\[
	\begin{split}
	\int \vv^{2n}Q_\s(g,F)F\,dvdx
	&\lesssim
	\int
	\|F\|_{L_{v}^{2,n}}^{2} 
	\|g\|_{L_{v}^{\infty,m}}
	\,dx
	\lesssim
	\|F\|_{L^{2,n}}^{2} 
	\|g\|_{L^{\infty,m}}\\
	&\lesssim
	\|f\|_{H^{k,n}}^{2}
	\|g\|_{L^{\infty,m}}
	\lesssim
	\|f\|_{X^{k,n,m}}^{2}
	\|g\|_{X^{k,n,m}}
	,
	\end{split}
	\]
	as desired.

	Furthermore, for $\int \vv^{2n}Q_\ns(g,F)F\,dvdx$, we recall~\eqref{e.carleman1} and apply \Cref{p:non-singular} to find 
	\[
	\begin{split}
	\int \vv^{2n}Q_\ns(g,F)F\,dvdx
	&\approx
	\int \vv^{2n} (S_\gamma *g) F^2 dv dx
	\lesssim
		\int \|g\|_{L_{v}^{\infty,n}} \|F\|_{L_{v}^{2,n}}^2\,dx
	\\&\lesssim
		\|g\|_{L^{\infty,m}} \|F\|_{L^{2,n}}
	\lesssim
		\|g\|_{L^{\infty,m}} \|f\|_{H^{k,n}}^2
	\leq
		\|g\|_{X^{k,n,m}}
		\|f\|_{X^{k,n,m}}^2
	.
	\end{split}
	\]
	This concludes the proof of \eqref{e.w05091} in case one.

	\smallskip
	
	\textbf{Case two:  $|\alpha''|+|\beta''|= k-1$, and in the form, $\int \vv^{2n}Q(\partial g,F)\partial F$.} Here we denote derivative operator $\partial=\partial_x^{\alpha'}\partial_v^{\beta'}$ as  $|\alpha'|+|\beta'|=1$.  
	
	We first estimate the $Q_\s$ portion.  Fix $\epsilon \in (0,\min\{s,1-s\})$.  Let $\mu = (1-2s)_+ + \epsilon$, $\kappa = s + \epsilon$, and $\tilde m = \epsilon + \max\{3+\gamma + 2s, n + \gamma + 3/2\}$.  We then directly apply \Cref{p:symmetry} to find
	\[
	\begin{split}
		\left|\int \vv^{2n} Q_\s (\partial g,F) \partial F dv dx \right|
		&\lesssim
			\|\partial g\|_{L^{\infty, \tilde m}}
			\|F\|_{H^{2s -1 + \mu,\mu+n + 3/2}}
			\|F\|_{H^{1,n}}\\
			&\quad+ 
			\|\partial^{2} g\|_{H^{3-s,3+\epsilon}} \|F\|_{H^{1,n}}^2
			+
			\|\partial g\|_{C^{\kappa, 3+\epsilon}}
			\|F\|_{H^{s, n + 5/2 + \epsilon}}
			\|F\|_{H^{1, n}}\\
			&\lesssim 
			\|\partial g\|_{L^{\infty, \tilde m}}
			\|f\|_{H^{k- 2(1-s) + \mu,\mu+n + 3/2}}
			\|f\|_{H^{k,n}}\\
			&\quad+ 
			\|\partial^{2} g\|_{H^{3-s,3+\epsilon}} \|f\|_{H^{k,n}}^2
			+
			\|\partial g\|_{C^{\kappa, 3+\epsilon}}
			\|f\|_{H^{k-(1-s), n + 5/2 + \epsilon}}
			\|f\|_{H^{k, n}}
			.
		\end{split}
	\]
	Applying the Sobolev embedding theorem on terms involving $g$ and then  \Cref{interpolation lemma} (up to increasing $m$ if necessary) yields
	\[
	\begin{split}
	\left|\int \vv^{2n} Q_\s (\partial g,F) \partial F dv dx \right|
	&\lesssim
	\| g\|_{H^{4+\epsilon, \tilde m}}
	\|f\|_{H^{k- (2-2s) + \mu,\mu+n + 3/2}}
	\|f\|_{H^{k,n}}
	+
	\| g\|_{H^{5-s,3+\epsilon}} \|f\|_{H^{k,n}}^2
	\\&\quad
	+
	\| g\|_{H^{4+\kappa, 3+\epsilon}}
	\|f\|_{H^{k-(1-s),n + 5/2 + \epsilon}}
	\|f\|_{H^{k, n}}
	\lesssim
	\|g\|_{X^{k,n,m}}
	\|f\|_{X^{k,n,m}}^{2}
	.
	\end{split}
	\]
	The estimate of the non-singular part $Q_{\ns}$ is the same as in the previous case and is thus omitted.

	\smallskip

	\textbf{Case three:  $|\alpha''|+|\beta''|= k-2$ and $|\alpha'|+|\beta'|=2$.}
	First, we estimate the $Q_\s$ term.  We see
	\be\label{e.c571}
	\begin{split}
	\int \vv^{2n} &Q_\s(G,F)\partial^\alpha_x \partial^\beta_v f\,dvdx
		\leq \|Q_\s(G,F)\|_{L^{2,n}} \|f\|_{H^{k,n}}\\
		&\leq \left(\|Q_\s(G,\vv^n F) - \vv^n Q_\s(G,F)\|_{L^2} + \|Q_\s(G,\vv^n F)\|_{L^2} \right)\|f\|_{H^{k,n}}\\
		&=: (B_1 + B_2)\|f\|_{H^{k,n}}.
	\end{split}
	\ee
	

	We estimate $B_2$ first.  Fix any $\theta \in (0,\min\{\frac{2-2s}{3}, \frac{3}{4}\})$ and let $p = 3/(4\theta)$ and $q = \frac{3}{3-4\theta}$.  
	Then we apply \Cref{p:thm_2.4}.(ii), H\"older's inequality,  and the Sobolev embedding theorem to find
	\[
	\begin{split}
	B_{2}
	&\lesssim
	\Big(
	\int \|G\|_{L^{\infty,3}_{v}}^{2}
	\|F\|_{H^{2s+\theta,n}_{v}}^{2}
	\,dx\Big)^{1/2}
	\lesssim
	\|G\|_{L_{x}^{2p}L^{\infty,3}_{v}}
	\|F\|_{L_{x}^{2q}H^{2s+\theta,n}_{v}}
	\\&\lesssim
	\|G\|_{H_{x}^{3/2-2\theta}H^{3/2+\theta,3}_{v}}
	\|F\|_{H_{x}^{2\theta}H^{2s+\theta,n}_{v}}
	\lesssim
	\|G\|_{H^{3-\theta,3}}
	\|F\|_{H^{2s+3\theta,n}}
	\lesssim
	\|g\|_{X^{k,n,m}}
	\|f\|_{X^{k,n,m}}
	.
	\end{split}
	\]
	The last inequality follows by our choice of $\theta$.
	
	For $B_{1}$, for any $\mu\in ((1-2s)_+, 2-2s)$ and $\tilde m = 1 + \max\{3, n+ \gamma + 3/2\}$, we appeal to our commutator estimate \Cref{p:commutator}, the Cauchy-Schwarz inequality, and the Sobolev embedding theorem to obtain: 
	\[
	\begin{split}
	&B_{1}
	\lesssim
		\left(
		\int (\|F\|_{L_{v}^{2,n+2}} + \|F\|_{H_{v}^{2s -1 + \mu,\mu+n}})^{2}
		\|G\|_{L_{v}^{\infty, \tilde m}}^{2}\,dx\right)^{1/2}
	\\&\lesssim
		\|G\|_{L_{x}^{4}L_{v}^{\infty, \tilde m}} \left(\|F\|_{L_{x}^{4}L_{v}^{2,n+2}}
		+
		\|F\|_{L_{x}^{4}H_{v}^{2s -1 + \mu,\mu+n}}\right)
	\lesssim
		\|G\|_{H^{5/2, \tilde m}} \left(\|F\|_{H^{3/4, n+2}}
		+
		\|F\|_{H_{v}^{2s - 1/4 + \mu,\mu+n}}\right)
	\\&\lesssim
		\|g\|_{H^{9/2, \tilde m}}
		(\|f\|_{H^{k- 5/4,n+2}}
		+
		\|f\|_{H^{k +  2s - 9/4 + \mu,\mu+n}}
		)
	.
	\end{split}
	\]
	Notice that $2s - 9/4 + \mu < 0$ as $\mu < 2-2s$.  With this, observe that all three norms above involve regularity of order strictly less than $k$.  Hence, assuming $m$ is sufficiently large, the interpolation lemma \Cref{interpolation lemma} yields
	\[
	\begin{split}
	B_{1}
 	\lesssim
	\|g\|_{X^{k,n,m}}
	\|f\|_{X^{k,n,m}}
	.
	\end{split}
	\]
	This concludes the estimates for the singular part.
	
	For the non-singular part, we apply \Cref{p:non-singular}  to find
	\[
	\begin{split}
	&\int \vv^{2n}Q_\ns(G,F) \partial^\alpha_x\partial^\beta_v f \,dvdx
	\lesssim
	\|f\|_{H^{k,n}}
 \|Q_\ns(G,F)\|_{L^{2,n}}
 \lesssim
 \|f\|_{H^{k,n}}
 	\|G\|_{L^{\infty,3+\gamma+\epsilon}} \|F\|_{L^{2,n }}.
	\end{split}
	\]
	Using the Sobolev embedding theorem and \Cref{interpolation lemma}, we obtain the desired estimate
	\[
		\int \vv^{2n}Q_\ns(G,F) \partial^\alpha_x\partial^\beta_v f \,dvdx
			\lesssim \|g\|_{X^{k,n,m}} \|f\|_{X^{k,n,m}}^2.
	\]
	This concludes the proof of~\eqref{e.w05091} in this case.

	\smallskip

	\textbf{Case four:  $|\alpha''|+|\beta''|=k-3$ and $|\alpha'|+|\beta'|=3$.}
	The proof of~\eqref{e.w05091} in this case is exactly as in case three, except with the choices
	\[
		\theta \in \left(0, \min\left\{\frac{1}{2},\frac{5-4s}{6}\right\}\right),
			\quad
		p = \frac{3}{1+4\theta},
			\quad\text{and}\quad
		q = \frac{3}{2-4\theta}
	\]
	in the estimate of $Q_\s$.  As such, we omit the proof.

	\smallskip

	\textbf{Case five:  $|\alpha''|+|\beta''|= k-4$ and $|\alpha'|+|\beta'|= 4$.} 
	We begin with the singular term:
	\[
		\begin{split}
			\int \vv^{2n}Q_\s(G,F)\partial^{\alpha}_{x}\partial^{\beta}_{v}f\,dvdx
				&\lesssim \|\partial^{\alpha}_{x}\partial^{\beta}_{v}f\|_{L^{2,n}}
					\|Q_\s(G,F)\|_{L^{2,n}}
				\leq \|f\|_{H^{k,n}}
					\|Q_\s(G,F)\|_{L^{2,n}}.
		\end{split}
	\]
	It is clear that we need only bound the last term above.  Recalling \Cref{p:thm_2.4}.(iii), we have, for any $\mu \in ((2s-1)_+,1)$,
	\[
	\begin{split}
	\|Q_{s}(G,F)\|_{L_v^{2,n}} 
	&\lesssim
	\left( \|F\|_{L_{v}^{\infty, 3}} +\|\vv^{n+5/2+ \mu} F\|_{C_v^{1+\mu}}
	\right)
	\|G\|_{L_{v}^{2,n}}
	.
	\end{split}
	\]
	Applying the Sobolev embedding theorem with $\tilde m$ sufficiently large (depending only on $n$), we obtain, for $\epsilon = (1-\mu)/4$, 
	\[
		\|Q_{s}(G,F)\|_{L_v^{2,n}} 
			\lesssim
			\|F\|_{H_v^{5/2 + \mu + \epsilon, \tilde m}}
			\|G\|_{L_{v}^{2,n}}.
	\]
	Using H\"older's inequality and the Sobolev embedding theorem yields
	\[
	\begin{split}
		\|Q_\s(G,F)\|_{L^{2,n}}^2
			&\lesssim	\int
				\|F\|_{H_{v}^{5/2+\mu+\epsilon,\tilde m}}^{2}
				\|G\|_{L_{v}^{2,n}}^{2}\,dx
			\leq
				\|F\|^2_{L_{x}^{3}H_{v}^{5/2+\mu+\epsilon,\tilde m}}
				\|G\|^2_{L^{6}_{x}L_{v}^{2,n}}
			\\&\lesssim
				\|F\|_{H_{x}^{1/2} H_{v}^{5/2+\mu+\epsilon, \tilde m}}^2
				\|G\|_{H^{1,n}}^2
			\leq
				\|g\|_{H^{k,n}}^2
				\|f\|_{H^{k-1 + \mu+\epsilon, \tilde m}}^2.
	\end{split}
	\]
	To control the last term, we use the interpolation lemma, \Cref{interpolation lemma} and that, by construction, $\mu + \epsilon < 1$, to find
	\[
		\|f\|_{H^{k-1 + \mu, \tilde m}}
			\lesssim \|f\|_{X^{k,n,m}}
	\]
	as long as $m$ is sufficiently large depending only on $n$ and $s$.  This concludes the proof of the bound of the singular term.
	
	We now consider the non-singular part.  As above, it is enough to bound $\|Q_\ns(G,F)\|_{L^{2,n}}$.  To this end, applying \Cref{p:non-singular} yields
%
%
%
	\[
	\begin{split}
		\|Q_\ns(G,F)\|_{L^{2,n}}^2
		&\lesssim
			\left(
			\int\|F\|_{L_v^{\infty,n+3}}^{2}
			\|G\|_{L_v^{2,n}}^{2}\,dx\right)^{1/2}
		\lesssim
			\|F\|_{L^{\infty,n+3}}^2
			\|G\|_{L^{2,n}}^2
		\leq
			\|F\|_{L^{\infty,n+3}}^2
			\|g\|_{X^{k,n,m}}^2.
	\end{split}
	\]
	Thus, we need only bound the norm of $F$ on the right hand side.  By the Sobolev embedding theorem and the interpolation lemma \Cref{interpolation lemma}, we find
	\[
		\|F\|_{L^{\infty,n+3}}
			\lesssim \|F\|_{H^{7/2 + 1/4,n+3}}
			\lesssim \|f\|_{H^{k-1/4, n+3}}
			\lesssim \|f\|_{X^{k,n,m}},
	\]
	as long as $m$ is sufficiently large.  This concludes the proof of~\eqref{e.w05091} in case five.
		
	\smallskip
	
	\textbf{Case six:  $|\alpha''|+|\beta''|\leq k-5$ and $|\alpha'|+|\beta'|\geq 5$.} 
	We begin by bounding the term with $Q_\s$.  As above, it is enough to bound $Q_\s(G,F)$ in $L^{2,n}$.  First, by \Cref{p:thm_2.4}.(iii) with $\mu \in ((2s-1)_+,1)$, we find
	\[
	\|Q_\s(G,F)\|_{L^{2,n}}^2
	\lesssim
		\int \|G\|_{L_{v}^{2,n}}^{2}
		\left(
		\|F\|_{L_{v}^{\infty, m}}^2
		+\|\vv^{n + 5/2 + \mu} F\|_{C^{1+\mu}}^2
		\right)\,dx.
	\]
	Applying the Sobolev embedding theorem and letting $\epsilon = (1-\mu)/2$, we obtain
	\[
		\begin{split}
		\|Q_\s(G,F)\|_{L^{2,n}}^2
			&\lesssim \int \|G\|_{L_{v}^{2,n}}^{2}
				\|F\|_{H^{5/2 + \mu, \tilde m}_{v}}^2
				\,dx
			\leq \|G\|_{L^{2,n}}^2 \|F\|_{L^\infty_x H_v^{5/2 + \mu, \tilde m}}^2\\
			&\lesssim \|G\|_{L^{2,n}}^2 \|F\|_{H^{4 + \mu + \epsilon, \tilde m}}^2
			\lesssim \|g\|_{H^{k,n}}^2 \|f\|_{H^{k-1+\mu+\epsilon, \tilde m}}^2,
		\end{split}
	\]
	where $\tilde m$ is a constant depending only on $n$.  The proof concludes as in the previous case by using the fact that $k-1 + \mu + \epsilon < k$ and \Cref{interpolation lemma}.

The estimate of the non-singular part $Q_{\ns}$ is the same as in the previous case and is thus omitted.  This concludes the proof of~\eqref{e.w05091} in case six and, thus, all cases.
	\end{proof}

Having established the bounds above, we now construct a solution.
	\begin{proposition}[Construction of solution in the linear equation]
	Fix $T>0$, a function $g \in Y^{k,n,m}_T$, and the initial data $0 \leq f_{\rm in}\in X^{k,n,m}$.  Then there exists $f \in Y^{k,n,m}_T$ such that 
	\begin{align}
	f_{t} + v\cdot \nabla_{x}f
	=
	Q(g,f)
	\end{align}
	and $f(0,\cdot,\cdot) = f_{\rm in}$.  Moreover, $f\geq 0$.
\end{proposition}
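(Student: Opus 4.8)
\emph{Overview and Step 1 (the regularized problem).}  The plan is to obtain $f$ as a limit of solutions of the regularized linear problems $\mathcal{L}_{1,\epsilon,\delta}f = 0$, $f(0,\cdot,\cdot)=f_{\rm in}$, following the construction in \cite[Section 3]{HST_boltz_wp}; the only genuinely new ingredients are the a priori bounds of \Cref{p:existence} and the collision estimates of \Cref{s:collision}, so we indicate only the structure and suppress details unchanged from \cite{HST_boltz_wp}.  Fix $\epsilon,\delta>0$.  Because of the velocity cut-offs in $Q_{\epsilon,\delta}$ and the fact that $Q_\s$ is an integro-differential operator of order $2s<2$, the map $f \mapsto \psi(\delta v)\,v\cdot\nabla_x f - Q_{\epsilon,\delta}(g,f)$ is a lower-order perturbation of $-\epsilon\Delta_{x,v}$; hence a standard argument (a contraction mapping based on the Duhamel formula for $\partial_t - \epsilon\Delta_{x,v}$, or the method of continuity in $\sigma$, as in \cite[Section 3]{HST_boltz_wp}) produces a unique $f_{\epsilon,\delta}\in Y^{k,n,m}_T$, smooth for $t>0$, solving $\mathcal{L}_{1,\epsilon,\delta}f_{\epsilon,\delta}=0$ with $f_{\epsilon,\delta}(0,\cdot,\cdot)=f_{\rm in}$.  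Since $g\geq0$ and $f_{\rm in}\geq0$, a comparison-principle argument of the type underlying \eqref{e:w05081} (see \cite[Section 3]{HST_boltz_wp}) shows $f_{\epsilon,\delta}\geq0$: at a putative negative minimum of $e^{-Kt}\vv^m f_{\epsilon,\delta}$, with $K$ large, the transport term vanishes, the $-\epsilon\Delta_{x,v}$ term has the good sign, the top-order part of $Q_\s$ has the good sign because there $f_{\epsilon,\delta}(v')\geq f_{\epsilon,\delta}(v)$, and the remaining zeroth-order terms — including $Q_\ns(g,f_{\epsilon,\delta})$, which has a bounded coefficient — are absorbed by $-K$.

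\emph{Step 2 (sending $\delta\to0$).}  Fix $\epsilon>0$.  By \eqref{e:w05081} the family $\{f_{\epsilon,\delta}\}_{\delta>0}$ is bounded in $L^\infty([0,T];L^{\infty,m})$ uniformly in $\delta$, and the $L^2$-energy estimate coming from $-\epsilon\Delta_{x,v}$ — in which the transport term drops out since $\psi(\delta v)v$ is divergence-free in $x$ — gives, for fixed $\epsilon$, a bound on $f_{\epsilon,\delta}$ in $L^2([0,T];H^{1,n})$ and on $\partial_t f_{\epsilon,\delta}$ in a space of lower regularity, both uniform in $\delta$.  By the Aubin--Lions lemma a subsequence converges strongly in a weighted $L^2_{t,x,v}$ space to some $f_\epsilon$, and the collision bounds of \Cref{s:collision} let us pass to the limit in every term, so $f_\epsilon$ solves $\mathcal{L}_{1,\epsilon,0}f_\epsilon=0$ with $f_\epsilon(0,\cdot,\cdot)=f_{\rm in}$ and $f_\epsilon\geq0$.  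One checks that $f_\epsilon\in Y^{k,n,m}_T$ (regularity is propagated by the $\epsilon$-parabolic equation, and the weighted $L^\infty$ norm by the comparison argument), so the higher-order estimate \eqref{e:w05082}, which requires precisely $\delta=0$, now applies and yields
\[
	\|f_\epsilon\|_{Y^{k,n,m}_T} \lesssim (1+T)\exp\Big\{C\int_0^T \|g(t)\|_{X^{k,n,m}}\,dt\Big\}\|f_{\rm in}\|_{X^{k,n,m}},
\]
with a constant independent of $\epsilon$.  This is what dictates the order of the limits: $\delta\to0$ first, then $\epsilon\to0$.

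\emph{Step 3 (sending $\epsilon\to0$).}  The $\epsilon$-uniform bound above, together with the $\epsilon$-uniform bound on $\partial_t f_\epsilon$ in a negative-order weighted space obtained from the equation, gives via Aubin--Lions a subsequence $f_\epsilon\to f$ with $f\in Y^{k,n,m}_T$, and convergence strong enough — after interpolating the uniform $H^{k,n}\cap L^{\infty,m}$ bound against the weak-$*$ limit — to pass to the limit in the transport term and, using that $Q(g,\cdot)$ is continuous out of the relevant spaces by \Cref{p:non-singular}, \Cref{p:thm_2.4}, \Cref{p:commutator}, and \Cref{p:symmetry}, in the collision term.  Thus $f_t + v\cdot\nabla_x f = Q(g,f)$ and $f(0,\cdot,\cdot)=f_{\rm in}$, and $f\geq0$ as a limit of nonnegative functions, with the stated bound inherited from the display above.

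\emph{Main obstacle.}  The delicate step is the passage to the limit in the nonlocal, order-$2s$ operator $Q_\s(g,\cdot)$ in Step 3, which cannot be carried out with mere $L^2_{t,x,v}$ compactness and depends crucially on the $\epsilon$-uniform $Y^{k,n,m}_T$ bound of \Cref{p:existence}; a secondary technical point is verifying in Steps 1--2 that the regularized solutions genuinely lie in $Y^{k,n,m}_T$ — in particular that the weighted $L^{\infty,m}$ norm is propagated — so that the a priori estimate \eqref{e:w05082} may legitimately be invoked.
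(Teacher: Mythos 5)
Your proposal is correct and follows essentially the same route as the paper, which simply defers to the three-step scheme of \cite[Proposition 3.3]{HST_boltz_wp}: solve the $(\epsilon,\delta)$-regularized problem via parabolic theory and the method of continuity, then deregularize using the \emph{a priori} bounds of \Cref{p:existence}. Your additional observations --- that $\delta\to0$ must precede $\epsilon\to0$ because \eqref{e:w05082} requires $\delta=0$, and that nonnegativity comes from the comparison argument behind \eqref{e:w05081} --- are consistent with, and usefully more explicit than, the paper's citation-level treatment.
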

\begin{proof} 
	The proof of \cite[Proposition 3.3]{HST_boltz_wp} can be adapted verbatim as it requires only the established bounds in \cite[Proposition 3.2]{HST_boltz_wp} (the analogue of our \Cref{p:existence}).  The proof is composed of three steps: (1) due to the Laplacian in $\mathcal{L}_{\sigma,\epsilon,\delta}$, apply the Schauder estimates to establish boundedness of a linear operator involving of $\mathcal{L}_{\sigma,\epsilon,\delta}$; (2) apply the method of continuity to construct the solution  of $\mathcal{L}_{\sigma,\epsilon,\delta}f=0$ using the bounds from the previous step, and (3) use the {\em a priori} estimates from \Cref{p:existence} to deregularize.  Due to its similarity to \cite[Proposition 3.3]{HST_boltz_wp}, we omit the details.
\end{proof}

\begin{proof}[Proof of existence in \Cref{t.main}]
	The idea used to prove \cite[Theorem 2.1]{HST_boltz_wp} is to construct a sequence $f_{i}$ solving
	\[
		(\partial_t + v\cdot \nabla_x) f_i
			= Q(f_{i-1},f_i),
	\]
	establishing the boundedness of this sequence inductively, and taking the limit $i\to\infty$.  Notice that we have the same bounds in \Cref{p:existence} as in \cite[Proposition 3.2]{HST_boltz_wp}, which is the crux of argument.  Thus, the proof in our setting will be unchanged and we omit the details.
\end{proof}

\subsection{Proof of uniqueness in \Cref{t.main}}\label{uniqueness}

We now finish the proof of \Cref{t.main} by establishing uniqueness.

\begin{proof}[Proof of uniqueness in \Cref{t.main}]
Consider any two solutions $f$ and $g$ of~\eqref{e:Boltzmann} with $f(0,\cdot,\cdot)= g(0,\cdot,\cdot) = f_{\rm in}$ and set $h=f-g$.  We have
\begin{align}\label{e.w05263}
h_{t} + v\cdot \nabla_{x} h
=
Q(f,h)+Q(h,g).
\end{align}
Then, we multiply~\eqref{e.w05263} by $\vv^{2n}h$ and integrate with respect to $v$ and $x$, yielding
\be\label{e.w05264}
\begin{split}
	\frac{1}{2}\frac{d}{dt}\|h\|_{L^{2,n}}^2
	&=
	\int \vv^{2n}Q(f,h)h\,dvdx
	+
	\int \vv^{2n}Q(h,g)h\,dvdx
	=
	I_{1}+I_{2}
	,
\end{split}
\ee
where
\be
\begin{split}
	I_{1}
	&=
	\int \vv^{2n}Q_\s(f,h)h\,dvdx
	+
	\int \vv^{2n}Q_\ns(f,h)h\,dvdx
	=
	I_{11}+I_{12}
\end{split}
\ee
and
\be
\begin{split}
	I_{2}
	&=
	\int \vv^{2n}Q_\s(h,g)h\,dvdx
	+
	\int \vv^{2n}Q_\ns(h,g)h\,dvdx
	=
	I_{21}+I_{22}
	.
\end{split}
\ee
For $I_{11}$, \Cref{p:thm_2.4}.(iv) yields, for $\tilde m$ sufficiently large,
\be\label{e.c6111}
I_{11}
\lesssim
\int
\|h\|_{L_{v}^{2,n}}^{2} 
\|f\|_{L_{v}^{\infty,\tilde m}}
\,dx
\lesssim
\|h\|_{L^{2}_{x}L_{v}^{2,n}}^{2} 
\|f\|_{L^{\infty}_{x}L_{v}^{\infty,\tilde m}}
\lesssim
\|h\|_{L^{2,n}}^{2} 
\|f\|_{X^{k,n,m}}
.
\ee
For $I_{12}$, we apply \Cref{p:non-singular} to obtain
\be\label{e.c6112}
	I_{12}
		\lesssim
			\|h\|_{L^{2,n}}^{2}
			\|f\|_{L^{\infty,3+\gamma+\epsilon}} 
		\lesssim
			\|h\|_{L^{2,n}}^{2} 
			\|f\|_{X^{k,n,m}}
.
\ee
For $I_{21}$, fix $\alpha \in (2s, 2)$, $\epsilon = (2 - \alpha)/2$, and $\tilde m$ be sufficiently large and apply \Cref{p:thm_2.4}.(iii) to find
\be\label{e.c6113}
	\begin{split}
	I_{21}
		&\lesssim \int \|h\|_{L_{v}^{2,n}}^2
			\left(
			\|g\|_{L_{v}^{\infty, m}}
			+\|\vv^{\tilde m} g\|_{C^{\alpha}_{v}}
			\right) dx
		\lesssim
			\|h\|_{L^{2,n}}^{2} 
			\|g\|_{L^{\infty}_{x}H_v^{3/2 + \alpha, \tilde m}}\\
		&\lesssim
			\|h\|_{L^{2,n}}^{2} 
			\|g\|_{H^{3 + \alpha + \epsilon, \tilde m}}
		\lesssim
			\|h\|_{L^{2,n}}^{2} 
			\|g\|_{X^{k,n,m}}.
	\end{split}
\ee
For $I_{22}$, apply \Cref{p:non-singular} to find
\be\label{e.c6114}
	\begin{split}
		I_{22} 
			\lesssim \|h\|_{L^{2,n}}^2 \|g\|_{L^{\infty,n+3}}
			\lesssim \|h\|_{L^{2,n}}^2 \|g\|_{X^{k,n,m}}.
	\end{split}
\ee

Combining the estimates of $I_{11},I_{12},I_{21}$, and $I_{22}$, that is,~\eqref{e.c6111}-\eqref{e.c6114}, and recalling that $\|f\|_{X^{k,n,m}}, \|g\|_{X^{k,n,m}} \lesssim 1$, we find
\[
	\frac{d}{dt} \|h(t)\|_{L^{2,n}}^2
		\lesssim \|h(t)\|_{L^{2,n}}^2.
\]
The Gr\"onwall inequality and the fact that $h(0,\cdot,\cdot) = 0$ implies that $h=0$.  We deduce that $f=g$, concluding the proof.
\end{proof}

\section{Proof of the estimates on the collision operator $Q$}\label{s:collision_proofs}

\subsection{Proof of the refined estimate on $K_g$ \Cref{l: w0524}}

\begin{proof}
	We first show that $|v+w|\approx |v|+|w|$ for any $w \in (v-v')^{\perp}$.  The ``$\lesssim$'' inequality is clear, so we show the other inequality:
	\begin{align*}
	|v+w|^2
	&=
	|v|^2 +2v\cdot w +|w|^2
	=
	|v|^2 +2v'\cdot w +|w|^2
	\geq
	|v|^2 - \frac{1}{1-\theta}|v'|^{2}-(1-\theta) |w|^{2} +|w|^2
	\\&\geq
	|v|^2 - (1-\theta)|v|^{2}-(1-\theta) |w|^{2} +|w|^2
	=
		\theta |v|^2 + \theta|w|^2
	.
	\end{align*}
	In the second equality, we used that $(v-v')\cdot w = 0$, in the first inequality, we used Young's inequality, and in the second inequality, we used the hypothesis that $(1-\theta)|v| \geq |v'|$.

	Recalling~\eqref{e.carleman2} and changing variables, we have
	\[
		|v-v'|^{3+2s} K_g(v,v')
			\approx
				\int_{v+(v'-v)^{\perp}} g(w)|v-w|^{\gamma+2s+1} \,dw
			=
				\int_{(v'-v)^{\perp}} g(v+w)|w|^{\gamma+2s+1} \,dw.
	\]
	Clearly, it is enough to simply bound the integral on the right hand side.  Using that $|v+w|\approx |v| + |w|$, as established above, we see that
	\begin{equation}\label{e.c611}
	\begin{split}
	&\Big|\int_{(v'-v)^{\perp}}
	g(v+w)|w|^{\gamma+2s+1} \,dw\Big|
	\lesssim
		\|g\|_{L^{\infty,m}}
		\int_{(v'-v)^{\perp}}
	\frac{|v|^{\gamma + 2s + 1}}{\langle v+w \rangle^{m}} \,dw
	\\&\quad\lesssim
	\|g\|_{L^{\infty, m}}
	\int_{(v'-v)^{\perp}}
	\frac{|w|^{\gamma+2s+1}}{\langle v \rangle^{m}+\langle w \rangle^{m}} \,dw
	\\&\quad=
	\|g\|_{L^{\infty, m}}
	\int_{(v'-v)^{\perp}\cap B_{\vv}}
	\frac{|w|^{\gamma+2s+1}}{\langle v \rangle^{m}+\langle w \rangle^{m}} \,dw
	+
	\|g\|_{L^{\infty, m}}
	\int_{(v'-v)^{\perp}\cap B^{c}_{\vv}}
	\frac{|w|^{\gamma+2s+1}}{\langle v \rangle^{m}+\langle w \rangle^{m}} \,dw\\
	&\quad= \|g\|_{L^{\infty,m}} \left( I_1 + I_2\right).
	\end{split}
	\end{equation}
	
	For $I_1$, that is, $w \in B_{\vv}$, we use the fact that
	\begin{equation}
		\frac{|w|^{\gamma+2s+1}}{\langle v \rangle^{m}+\langle w \rangle^{m}}
	\lesssim
		|w|^{\gamma + 2s + 1}
		\langle v \rangle^{-m}
	.												
	\end{equation}
	Thus, we see (recall we are integrating over a subset of a two-dimensional hyperplane)
	\begin{equation}\label{e.w05261}
	\begin{split}
	I_1
		\lesssim
			\vv^{-m} \int_{(v'-v)^{\perp}\cap B_{\vv}}
			|w|^{\gamma + 2s + 1} \,dw
		\lesssim
			\langle v \rangle^{\gamma +2s+3-m }
	.
	\end{split}
	\end{equation}

	For $I_2$, that is, $w \in B^{c}_{\vv}$, we have
	\begin{align}
	\frac{|w|^{\gamma+2s+1}}{\langle v \rangle^{m}+\langle w \rangle^{m}}
	\lesssim	
	\frac{|w|^{\gamma+2s+1}}{\langle w \rangle^{m}}
	\lesssim		
	\langle w \rangle^{\gamma +2s+1-m }
	.			
	\end{align}
		Therefore, we get (again, recall, we are integrating over a subset of a two-dimensional hyperplane)
		\begin{equation}\label{e.w05262}
		\begin{split}
		I_2
		\lesssim
			\int_{(v'-v)^{\perp}\cap B^{c}_{\vv}}
			\langle w \rangle^{\gamma +2s+1-m } \,dw
		\lesssim
			\langle v \rangle^{\gamma +2s+3-m }
		.
		\end{split}
		\end{equation}
	Combining~\eqref{e.c611}, \eqref{e.w05261}, and \eqref{e.w05262}, we obtain the desired inequality, concluding the proof.
\end{proof}

\subsection{Commutator estimate: proof of \Cref{p:commutator}}
Before beginning, we require a helper lemmas concerning the weighted Sobolev norms.  While this result is somewhat elementary, we do not know of a reference.  
\begin{lemma}\label{p03}
	For $\tilde s \in (0,1)$, $R>0$, $\ell \geq 0$, and $\mathcal{D} = \{(v,v') \in \mathbb{R}^6: |v-v'| \leq \vv / R\}$, we have, for any $f \in H^{\tilde s, \ell}(\R^3)$,
	\be\label{e.c353}
	\int_{\mathcal{D}}
	\vv^{\ell} \frac{|f(v)-f(v')|^{2}}{|v-v'|^{3+2\tilde s}}\,dv'dv
	\lesssim
	\|f\|_{H^{\tilde s,\ell}}^2
	.
	\ee
\end{lemma}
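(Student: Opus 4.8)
The plan is to deduce the weighted estimate from the ordinary Gagliardo seminorm of $F := \langle v\rangle^\ell f$, for which the right-hand side of \eqref{e.c353} is comparable to $\|F\|_{L^2}^2 + \int_{\R^6} |v-v'|^{-3-2\tilde s}|F(v)-F(v')|^2\,dv'\,dv$. Writing $f = \langle v\rangle^{-\ell}F$ and decomposing
\[
	f(v)-f(v') = \langle v\rangle^{-\ell}\big(F(v)-F(v')\big) + \big(\langle v\rangle^{-\ell}-\langle v'\rangle^{-\ell}\big)F(v'),
\]
I would bound the integrand of \eqref{e.c353} by
\[
	\langle v\rangle^\ell|f(v)-f(v')|^2
		\lesssim \langle v\rangle^{-\ell}|F(v)-F(v')|^2
			+ \langle v\rangle^{\ell}\big|\langle v\rangle^{-\ell}-\langle v'\rangle^{-\ell}\big|^2|F(v')|^2
		=: \mathrm A(v,v') + \mathrm B(v,v').
\]
The term $\mathrm A$ needs no use of $\mathcal D$ at all: since $\langle v\rangle^{-\ell}\le1$, integrating it over all of $\R^6$ already gives a contribution bounded by $[F]_{H^{\tilde s}}^2 \lesssim \|f\|_{H^{\tilde s,\ell}}^2$.

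The term $\mathrm B$ is where the geometry of $\mathcal D$ enters, and I expect this to be the only delicate point. Two ingredients are needed. First, on $\mathcal D$ the two weights are comparable: since $\langle\cdot\rangle$ is $1$-Lipschitz, $|\langle v\rangle-\langle v'\rangle|\le|v-v'|\le\langle v\rangle/R$, so $\langle v\rangle\approx\langle v'\rangle$ on $\mathcal D$ (with implicit constant depending on $R$, which is the relevant regime). Second, applying the mean value theorem to $t\mapsto t^{-\ell}$ on the segment joining $\langle v\rangle$ and $\langle v'\rangle$ — on which the derivative has size $\approx\langle v\rangle^{-\ell-1}$ — yields $|\langle v\rangle^{-\ell}-\langle v'\rangle^{-\ell}|\lesssim\langle v\rangle^{-\ell-1}|v-v'|$. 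Combining the two gives, on $\mathcal D$, $\mathrm B(v,v')\lesssim\langle v\rangle^{-\ell-2}|v-v'|^2|F(v')|^2\approx\langle v'\rangle^{-\ell-2}|v-v'|^2|F(v')|^2$.

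It then remains to integrate $\mathrm B(v,v')\,|v-v'|^{-3-2\tilde s}$ over $\mathcal D$. Fixing $v'$ and integrating in $v$ over the slice $\{v:(v,v')\in\mathcal D\}\subseteq\{v:|v-v'|\lesssim\langle v'\rangle/R\}$, and using that $-1-2\tilde s>-3$ so the leftover singularity is integrable, one gets
\[
	\int_{\{v:(v,v')\in\mathcal D\}}\langle v'\rangle^{-\ell-2}|v-v'|^{-1-2\tilde s}\,dv
		\lesssim \langle v'\rangle^{-\ell-2}\big(\langle v'\rangle/R\big)^{2-2\tilde s}
		\lesssim \langle v'\rangle^{-\ell-2\tilde s}\le 1 .
\]
Integrating the remaining $|F(v')|^2$ over $\R^3$ produces $\|F\|_{L^2}^2=\|f\|_{H^{\tilde s,\ell}}^2$, which closes the estimate. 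The crux is exactly the matching of exponents in the last display: the factor $|v-v'|^2$ generated by the weight difference, together with the truncation $|v-v'|\lesssim\langle v'\rangle/R$ built into $\mathcal D$, yields precisely the growth $\langle v'\rangle^{2-2\tilde s}$ needed to absorb the loss $\langle v'\rangle^{-\ell-2}$; once this is arranged, everything else is routine.
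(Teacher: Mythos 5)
Your proof is correct and follows essentially the same route as the paper's: isolate the Gagliardo seminorm of $F=\vv^{\ell}f$, then control the weight-commutator term by a first-order Taylor/mean-value expansion of the weight (gaining a factor $|v-v'|^{2}$) and integrate the residual singularity $|v-v'|^{-1-2\tilde s}$ over the ball of radius $\approx\vvp$ that the definition of $\mathcal D$ permits. The only cosmetic difference is that you factor $f=\vv^{-\ell}F$ where the paper multiplies the difference through by $\vv^{\ell}$; note that, exactly as in the paper's own argument, the comparability $\vv\approx\vvp$ on $\mathcal D$ genuinely requires $R>1$ (which is the regime in which the lemma is invoked, with $R=10$).
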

Before beginning we remark briefly about the content of \Cref{p03}.  Recall that $\|f\|_{\dot H^{\tilde{s}}}=\int \frac{|f(v)-f(v')|^{2}}{|v-v'|^{3+2\tilde{s}}}\,dv'dv$ and, hence,
\[
\|f\|_{H^{\tilde s, \ell}}^2
= \|\vv^\ell f\|_{H^{\tilde s}}^2
= \|\vv^\ell f\|_{L^2}^2 + \int \frac{ |\vv^\ell f(v) - \vvp^\ell f(v')|^2}{|v-v'|^{3 + 2\tilde s}} dv dv'.
\]
The difference between the quantity above and the left hand side of~\eqref{e.c353} is now clear.
\begin{proof}
	To begin, we use the triangle inequality and that $(a+b)^2 \leq 2a^2 + 2b^2$ to find
	\begin{align*}
		\int_{\mathcal{D}}&
		\vv^{\ell} \frac{|f(v)-f(v')|^{2}}{|v-v'|^{3+2\tilde s}}\,dv'dv
		\lesssim
		\int\limits_{\mathcal D} \frac{|\vv^{\ell} f(v)-\langle v'\rangle^{\ell} f(v')|^{2} +|\vv^{\ell} -\langle v'\rangle^{\ell}|^{2}|f(v')|^{2}}{|v-v'|^{3+2\tilde s}}\,dv'dv
		\\&=
		\int\limits_{\mathcal D}\frac{|\vv^{\ell} f(v)-\langle v'\rangle^{\ell} f(v')|^{2} }{|v-v'|^{3+2\tilde s}}\,dv'dv
		+
		\int\limits_{\mathcal D} \frac{|\vv^{\ell} -\langle v'\rangle^{\ell}|^{2}|f(v')|^{2}}{|v-v'|^{3+2\tilde s}}\,dv'dv.
	\end{align*}
	The first term above is clearly bounded above by $\|f\|_{\dot H^{\tilde s,\ell}}$ simply by enlarging the domain of integration.  Hence, we consider only the second term.

	For $(v,v')\in \mathcal D$, we find, via Taylor's theorem, that $|\vv^\ell - \vvp^\ell|^2 \lesssim \vv^{2\ell-2} |v-v'|^2$. Thus,
	\[\begin{split}
	\int\limits_{\mathcal D} &\frac{|\vv^{\ell} -\langle v'\rangle^{\ell}|^{2}|f(v')|^{2}}{|v-v'|^{3+2s}}\,dv'dv
	\lesssim \int\limits_{\mathcal D} \frac{\vv^{2(\ell-1)}|f(v')|^{2}}{|v-v'|^{1+2s}}\,dv'dv.
	\end{split}\]
	Next, clearly there exists $\tilde R>0$ depending only on $R$ such that $\mathcal{D} \subset \{(v,v')\in \mathbb{R}^6: |v-v'| \leq \vvp/\tilde R\}$.  Additionally, $(v,v') \in \mathcal{D}$ implies that $\vv \approx \vvp$.  These two facts yield
	\[
	\begin{split}
	\int\limits_{\mathcal D} &\frac{|\vv^{\ell} -\langle v'\rangle^{\ell}|^{2}|f(v')|^{2}}{|v-v'|^{3+2s}}\,dv'dv
	\lesssim \int \vvp^{2(\ell-1)} |f(v')|^2 \int_{B_{\vvp/\tilde R}(v')} \frac{1}{|v-v'|^{1+2s}}\,dv dv'\\
	&\lesssim \int \vvp^{2(\ell-1)} |f(v')|^2 \vvp^{2-2s} dv'
	\lesssim \int \vvp^{2\ell} |f(v')|^2 dv'
	= \|f\|_{L^{2,\ell}},
	\end{split}
	\]
	which concludes the proof.
\end{proof}

\begin{proof}[Proof of \Cref{p:commutator}]
	We prove this using the characterization of the $L^2$-norm via duality; that is, fix any $h \in L^2(\R^3)$ and we estimate
	\[
		\int h \left( \vv^\ell Q_\s(g,f) - Q_\s(g,\vv^\ell f)\right) \,dv.
	\]

	For any $v$, let $R_{v}=\langle v \rangle/10$ and denote the diagonal strip
	\begin{equation}\label{e.mcD}
		\mathcal D=\{(v,v'): |v-v'|<R_{v} \}.
	\end{equation}
	Recalling~\eqref{e.carleman1}, we rewrite the quantity of interest as
	\be\label{e.c341}
	\begin{split}
		\int h &\left(\langle v \rangle^{\ell}Q_\s(g,f)- Q_\s(g, \langle v \rangle^{\ell}f) \right)\,dv
		=
		\int K_{g}(v,v')h(v) \left(\langle v \rangle^{l}f(v')- \langle v' \rangle^{l}f(v')  \right)\,dv'dv
		\\&=
		\int \limits_{\mathcal D} K_{g}(v,v')h(v) \left(\langle v \rangle^{\ell}f(v')- \langle v' \rangle^{\ell}f(v')  \right)\,dv'dv\\
 		&\qquad +
		\int\limits_{\mathcal D^c} K_{g}(v,v')h(v) \left(\langle v \rangle^{\ell}f(v')- \langle v' \rangle^{\ell}f(v')  \right)\,dv'dv
		=
		I_{1}+I_{2}.
	\end{split}
	\ee
	We estimate each of $I_1$ and $I_2$ in turn. 
	
	\smallskip

	{\bf Step one: bounding $I_2$.}  We further decompose $I_2$ as
	\be\label{e.c342}
	I_{2}
	=
	\int\limits_{\mathcal D^c} K_{g}(v,v')h(v) \langle v \rangle^{\ell}f(v')\,dv'dv
	-
	\int\limits_{\mathcal D^c} K_{g}(v,v')h(v) \langle v' \rangle^{\ell}f(v')\,dv'dv
	=
	I_{21}-I_{22}.
	\ee
	
	We first consider $I_{22}$.  Applying \Cref{l:2.3}, we find
	\begin{equation}\label{e.c2241}
		\begin{split}
			|I_{22}|
			&\lesssim \|g\|_{L^{\infty, m}}
			\int\limits_{\mathcal D^c} \frac{|h(v)|}{|v-v'|^{3+2s}}
			\langle v \rangle^{\gamma+2s+1}
			\langle v' \rangle^{\ell}|f(v')|\,dv'dv
			.
		\end{split}
	\end{equation}
	Rewriting the limits of integration, using that $|v-v'| \gtrsim \vv$, and applying Cauchy-Schwarz in $v'$ yields
	\[\begin{split}
	\int\limits_{\mathcal D^c} &\frac{|h(v)|}{|v-v'|^{3+2s}}
	\langle v \rangle^{\gamma+2s+1}
	\langle v' \rangle^{\ell}|f(v')|\,dv'dv
	= \int \langle v \rangle^{\gamma+2s+1} |h(v)|
	\left( \int_{B_{R_v}^c(v)} \frac{\langle v' \rangle^{\ell}|f(v')|}{|v-v'|^{3+2s}}
	dv' \right) dv\\
	&\lesssim \int \langle v \rangle^{\gamma-2} h(v)
	\left(\int
		\vvp^{-3 - 2\epsilon} dv'\right)^{1/2}
	\left(\int \langle v' \rangle^	{\ell+3+2\epsilon}f^{2}(v') dv'\right)^{1/2}
	dv.
	\end{split}\]
	Noticing that the integral involving $f$ is a weighted $L^2$ norm of $f$, the middle integral is finite, and combining this with~\eqref{e.c2241}, we obtain
	\begin{equation}\label{e.w311}
		\begin{split}
			|I_{22}|
			&\lesssim 
			\|g\|_{L^{\infty, m}}
			\|f\|_{L^{2, \ell+1}}
			\int\langle v \rangle^{\gamma - \frac{3}{2} }h(v) dv
			= \|g\|_{L^{\infty, m}}
			\|f\|_{L^{2, \ell+3/2+\epsilon}}
			\|h\|_{L^2}
			.
		\end{split}
	\end{equation}
	We now consider $I_{21}$.  Here, we split the integral as follows:
	\begin{align*}
		I_{21}
		&=
		\int\limits_{\mathcal D^{c}\cap \{|v'|\geq |v|/10 \}} K_{g}(v,v')h(v) \langle v \rangle^{\ell}f(v')\,dv'dv
		+
		\int\limits_{\mathcal D^{c}\cap \{|v'|\leq |v|/10 \}} K_{g}(v,v')h(v) \langle v \rangle^{\ell}f(v')\,dv'dv\\
		&=
		I_{211}+I_{212}
		.
	\end{align*}
	The estimate of $I_{211}$ reduces to the estimate $I_{22}$:
	\begin{align*}
		|I_{211}|
		&\leq
		\int\limits_{\mathcal D^{c}\cap \{|v'|\geq |v|/10 \}} |K_{g}(v,v')h(v) \langle v \rangle^{\ell}f(v')|\,dv'dv
		\\&\lesssim
		\int\limits_{\mathcal D^{c}\cap \{|v'|\geq |v|/10 \}} |K_{g}(v,v')h(v) \langle v' \rangle^{\ell}f(v')|\,dv'dv
		\leq 
		\int\limits_{\mathcal D^{c}} |K_{g}(v,v')h(v) \langle v' \rangle^{\ell}f(v')|\,dv'dv.
	\end{align*}
	The last term above is exactly the term we estimate in~\eqref{e.c2241}; hence,
	\begin{equation}\label{e.w312}
		|I_{211}|
		\lesssim 
		\|g\|_{L^{\infty, m}}
		\|f\|_{L^{2, \ell+3/2+\epsilon}}
		\|h\|_{L^2}
		.
	\end{equation}
	Turning to $I_{212}$, we get
	\begin{equation}\label{e.c2251}
		\begin{split}
			|I_{212}| 
			&\lesssim		
			\|g\|_{L^{\infty, m}}
			\int\limits_{\mathcal D^{c}\cap \{|v'|\leq |v|/10 \}} \frac{h(v) \langle v \rangle^{\ell+\gamma +2s+3-m }f(v')}{|v-v'|^{3+2s}}\,dv'dv	.
		\end{split}
		\ee
		After applying Cauchy-Schwarz to the integral in $v'$, a direct computation using that $\ell>3/2$ yields
		\be\label{e.c2252}
		\begin{split}
			&\int\limits_{\mathcal D^{c}\cap \{|v'|\leq |v|/10 \}} \frac{h(v) \langle v \rangle^{\ell+\gamma +2s+3-m }f(v')}{|v-v'|^{3+2s}}\,dv'dv\\
			&\qquad\leq
			\int \vv^{\ell + \gamma +2s+3-m}h(v)
			\left(\int_{B_{R_v}(v)^c \cap B_{|v|/10}} \frac{ \vvp^{-2\ell} dv'}{|v-v'|^{6+4s}}\right)^{1/2}
			\left(\int \vvp^{2\ell}f^{2}(v')\,dv'\right)^{1/2}
			dv
			\\&\qquad\lesssim
			\|f\|_{L^{2,\ell}}
			\int \vv^{\ell + \gamma -m}h(v)
			dv.
		\end{split}
		\ee
		Using that $m > \ell +\gamma+ \frac{3}{2}$, we conclude from~\eqref{e.c2251} and~\eqref{e.c2252} that
		\be\label{e.w313}
		|I_{212}|
		\lesssim
		\|g\|_{L^{\infty, m}}
		\|f\|_{L^{2, \ell}}
		\|h\|_{L^{1, \ell + \gamma - m}}
		\lesssim
		\|g\|_{L^{\infty, m}}
		\|f\|_{L^{2, \ell}}
		\|h\|_{L^{2}}.
		\ee
		Combining \eqref{e.w311}, \eqref{e.w312}, and \eqref{e.w313} we deduce that
		\be\label{e.c311}
		|I_2|
		\lesssim \|f\|_{L^{2,\ell+1}} \|g\|_{L^{\infty,m}} \|h\|_{L^2}.
		\ee
		This concludes step one.

		\smallskip

		{\bf Step two: bounding $I_1$.} 
		For notational convenience, let $W_\ell(v) = \vv^\ell$.  For any function $\psi$ and any velocities $v$ and $v'$, let $\delta \psi= \psi(v) - \psi(v')$ (we suppress the dependence on $v$ and $v'$ as no confusion will arise).  Then, we rewrite $I_1$ as
		\begin{align*}
			I_{1}
			=
			\int \limits_{\mathcal D} K_{g}(v,v')h(v) \delta f\, \delta W_\ell\,dv'dv
			+
			\int \limits_{\mathcal D} K_{g}(v,v')h(v) f(v) \delta W_\ell\,dv'dv
			=
			I_{11}+I_{12}.
		\end{align*}
		For $I_{11}$, we see, by the definition of the kernel $K_{g}$ and get that
		\be\label{e.c2253}
		I_{11}^2
		\lesssim \|g\|_{L^{\infty,m}}^2
		\left(\int \limits_{\mathcal D}  \frac{\vv^{\gamma + 2s + 1}}{|v-v'|^{3 + 2s}} |h(v)| |\delta f| |\delta W_\ell|\,dv'dv\right)^{2}.
		\ee
		Next, 
		applying the Cauchy-Schwarz inequality yields
		\be\label{e.c2254}
		\begin{split}
			&\left(\int \limits_{\mathcal D}  \frac{\vv^{\gamma + 2s + 1}}{|v-v'|^{3+2s}} |h(v)| |\delta f| |\delta W_\ell|\,dv'dv\right)^{2}\\
			&\quad \leq
			\left(\int \limits_{\mathcal D}  \frac{h(v)^2}{\vv^{2\mu}} |v-v'|^{-3+2\mu}\,dv'dv\right)
			\left(\int \limits_{\mathcal D} \vv^{2(\mu+\gamma + 2s + 1)} \frac{(\delta f)^2 (\delta W_\ell)^2}{|v-v'|^{3 + 4s +2\mu}} \,dv'dv\right).
		\end{split}
		\ee
		We first consider the integral involving $h$.  Recalling the definition of $\mathcal{D}$~\eqref{e.mcD}, we find
		\be\label{e.c2255}
		\begin{split}
			\int \limits_{\mathcal D}  h(v)^{2}|v-v'|^{-3 + 2\mu}\langle v\rangle^{-2\mu}\,dv'dv
			&\leq \int h(v)^2 \vv^{-2\mu} \int_{B_{R_v}(v)} |v-v'|^{-3 + 2\mu} dv' dv\\
			&\lesssim \int h(v)^2 dv
			= \|h\|_{L^2}^2.
		\end{split}
		\ee
		Next, we consider the second integral in~\eqref{e.c2254}.  
		Recall that $|v-v'|<R_{v}$ by the definition of $\mathcal{D}$,~\eqref{e.mcD}.  Hence, by Taylor's theorem, we have
		\[
		|\delta W_\ell|^2
		\lesssim
		|v'-v|^{2}\langle v \rangle^{2\ell-2}.
		\]
		Using this and \Cref{p03}, we find
		\be\label{e.c2256}
		\begin{split}
			\int \limits_{\mathcal D}
			\frac{\vv^{2(\mu + \gamma + 2s + 1)}(\delta f)^{2} (\delta W_\ell)^{2}}
			{|v-v'|^{3 + 4s + 2\mu}} \,dv'dv
			&\lesssim 
			\int \limits_{\mathcal D}
			\frac{\vv^{2(\mu + \gamma + 2s + \ell)}(\delta f)^2}{|v-v'|^{3 + 2(2s - 1 + \mu)}}\,dv'dv
			\lesssim \|f\|_{H^{2s-1 + \mu, \mu + \ell}}^2.
		\end{split}
		\ee
		We conclude by combining~\eqref{e.c2253}-\eqref{e.c2256} to obtain
		\be\label{e.w314}
		|I_{11}|
		\lesssim \|g\|_{L^{\infty,m}} \|h\|_{L^2} \|f\|_{H^{2s  - 1 + \mu, \mu + \ell+\gamma+2s}}
		\ee

		We consider now $I_{12}$.  Using a second order Taylor expansion of $W_\ell(v) =\vv^\ell$, we see that
		\begin{align*}
			I_{12}
			&=
			\int \limits_{\mathcal D} K_{g}(v,v')h(v) f(v) \delta W_\ell\,dv'dv	
			\\&=
			\int h(v) f(v)\int \limits_{B_{R_{v}(v)}}  K_{g}(v,v')\left((D_{v} W_\ell)|_v (v-v')
			+\frac{1}{2} (v-v')\cdot (D_{v}^{2} W_\ell)|_{\xi_{v,v'}} (v-v')\right)\,dv'dv
			\\&=
			I_{121} + I_{122},
		\end{align*}
		where $\xi_{v,v'} = tv' + (1-t)v$ for some $t \in [0,1]$.
		For $I_{121}$, we use \Cref{p.w3311}.\eqref{l.is3.7} to obtain
		\be\label{e.c351}
		I_{121}=0.
		\ee
		For $I_{122}$, we use that $|(D^2_v W_\ell)|_{\xi_{v,v'}}| \lesssim \vv^{\ell-2}$, due to the fact that $v' \in B_{R_v}(v)$, in order to find
		\[
		|I_{122}|
		\les
		\int_{\R^3} |h(v) f(v)|\int_{B_{R_{v}(v)}}  |K_{g}(v,v')|
		\vv^{\ell-2} |v-v'|^2\,dv'dv.
		\]
		Thus, we have by appealing to \Cref{l:new kernel}
		\begin{equation}\label{e.w315}
			\begin{split}
				|I_{122}|
				&\les
				\|g\|_{L^{\infty,m}}
				\int_{\mathbb R^3} |h(v) f(v)|  
				\vv^{\ell-2 + \gamma + 2s} \,dv
				\les
				\|g\|_{L^{\infty,m}}
				\|f\|_{L^{2, \ell-2 + \gamma + 2s}}
				\|h\|_{L^{2}}.
			\end{split}
		\end{equation}
		Combining~\eqref{e.c351} and~\eqref{e.w315} and the fact that $\ell+3/2+\epsilon > \ell-2+\gamma + 2s$, we find
		\be\label{e.c352}
		|I_{12}|
		\lesssim \|g\|_{L^{\infty,m}}
		\|f\|_{L^{2, \ell+1}}
		\|h\|_{L^{2}}.
		\ee
		Thus, by~\eqref{e.w314} and \eqref{e.c352},
		\[
		|I_1|
		\lesssim \left(\|f\|_{L^{2,\ell+3/2+\epsilon}}+\|f\|_{H^{2s-1+\mu,\mu + \ell+\gamma+2s}}\right)
		\|g\|_{L^{\infty,m}}
		\|h\|_{L^2}.
		\]
		This concludes step two, and, thus, the proof.
	\end{proof}

\subsection{Collection of $Q_\s$ estimates: proof of \Cref{p:thm_2.4}.(i)-(iv)}

\subsubsection{Proof of \Cref{p:thm_2.4}.(i)}
\begin{proof}
%
%
	Let
	\be\label{e.c6151}
		\hat K_g(v,v')
			= \frac{1}{\|g\|_{L^{\infty, 3+\gamma+2s+\epsilon}}} K_g(v,v'),
	\ee
	and we have that $\hat K_g$ satisfies the conditions (4.2), (4.3), and (4.4) in \cite[Section 4]{IS2020weakharnack} uniformly in $v$. 
	This allows us to apply their general estimates, which we do now.  For clarity, we adopt their notation as closely as possible. 
	
	Let $\hat L_g$ be the operator defined by replacing the kernel $K_g$ with $\hat K_g$ in $Q_\s$, and let $\hat L_g^t$ be its transpose. 
	Letting $\Delta_i$ be the Littlewood-Paley projectors as in \cite[Proof of Theorem 4.1]{IS2020weakharnack} and using \cite[Theorems 4.3 and 4.6]{IS2020weakharnack}, yields, for any $\theta$,
	\[
		\|\hat L_g \Delta_i f\|_{L^2} \lesssim 2^{i \theta} \|\Delta_i f\|_{H^{2s-\theta}}
			\quad\text{and}\quad
		\|\hat L_g^t \Delta_i h\|_{L^2} \lesssim 2^{i (2s-\theta)} \|\Delta_i h\|_{H^\theta}.
	\]
	Also, recall that $\|\Delta_i \phi\|_{H^\theta} \approx 2^{i\theta} \|\Delta_i \phi\|_{L^2}$ for any $\theta$, $i$ and $\phi$.
	
	Using all estimates above for any fixed $\theta \in (0,2s)$ yields 
	\[
		\begin{split}
		&\frac{1}{\|g\|_{L^{\infty,3+\gamma+2s+\epsilon}}} \int Q_\s(g,f) h dv
			= \frac{1}{\|g\|_{L^{\infty,3+\gamma+2s+\epsilon}}} \sum_{i,j} \int Q_\s(g,\Delta_if)\Delta_jh dv\\
			&\qquad= \sum_{i\leq j} \int (\hat L_g \Delta_if) \Delta_jh dv
				+ \sum_{\theta i > (2s-\theta) j} \int \Delta_i f (\hat L_g^t \Delta_j h) dv\\
			&\qquad\lesssim \sum_{\theta i\leq (2s-\theta) j} 2^{\theta i - (2s-\theta)j} \|\Delta_i f\|_{H^{2s-\theta}} \|\Delta_j g\|_{H^\theta}
				+ \sum_{\theta i > (2s-\theta) j} 2^{-\theta i + (2s-\theta)j} \|\Delta_i f\|_{H^{2s-\theta}} \|\Delta_j g\|_{H^\theta}\\
			&\qquad= \sum_{i,j} 2^{-|\theta i - (2s-\theta)j|} \|\Delta_i f\|_{H^{2s-\theta}} \|\Delta_j g\|_{H^\theta}\\
			&\qquad\leq \Big(\sum_{i,j} 2^{-|\theta i - (2s-\theta)j|} \|\Delta_i f\|^2_{H^{2s-\theta}}\Big)^{1/2} \Big(\sum_{i,j} 2^{-|\theta i - (2s-\theta)j|} \|\Delta_j g\|^2_{H^\theta}\Big)^{1/2}
			\lesssim \|f\|_{H^{2s-\theta}} \|g\|_{H^\theta}.
		\end{split}
	\]
	In the last inequality, we sum first over $j$, using that $\theta, 2s-\theta>0$ by assumption, and then recalling that $\sum_i \|\Delta_i f\|^2_{H^{2s-\theta}} \approx \|f\|_{H^{2s-\theta}}^2$ (and similarly for $g$).
\end{proof}

\subsubsection{Proof of \Cref{p:thm_2.4}.(ii)}

\begin{proof}
We adopt the notation and setting of the proof of \Cref{p:thm_2.4}.(i).  Then
\[
	\begin{split}
		\frac{1}{\|g\|^{1/2}_{L^{\infty,3+\gamma+2s+\epsilon}}} \|Q_\s(g,f)\|_{L^2}
			&= \|\hat L_g f\|_{L^2}
			\leq \sum_{i=0}^\infty \|\hat L_g \Delta_i f\|_{L^2}
			\lesssim \sum_{i=0}^\infty 2^{-i\theta} \|\Delta_i f\|_{H^{2s+\theta}}\\
			&\lesssim \Big(\sum_{i=0}^\infty \|\Delta_i f\|^2_{H^{2s+\theta}}\Big)^{1/2}
			\approx \|f\|_{H^{2s+\theta}}.
	\end{split}
\]
%
%
\end{proof}

\subsubsection{Proof of \Cref{p:thm_2.4}.(iii)}

In order to establish part (iii) of \Cref{p:thm_2.4}, we require an analogue of Young's convolution inequality in the setting of the weighted Lebesgue spaces in order to handle terms of the form $\int g(w) |v-w|^{\gamma + 2s} dw$.  These have been well-studied and are understood in some generality (see, e.g., \cite{GuoFanWuZhao}).  However, for the convenience of the reader and because we can get a slightly sharper estimate (due to the specific form considered here), we include the proof.

\begin{lemma}[Weighted Young's inequality]
\label{l: Weighted Young}
	Suppose that $n > 3/2 + \eta$, $-3<\eta<0$, and $\ell > 3/2 + \eta + (3/2 - n)_+$.  If $g \in L^{2,n}$, then
	\be
	\int \vv^{-2\ell}\left(\int g(\tilde v)|v-\tilde v|^{\eta}\,d\tilde v \right)^{2}\,dv
	\lesssim
	\|g\|_{L^{2,n}}^2
	.
	\ee
\end{lemma}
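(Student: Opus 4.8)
The plan is to control the inner integral $\displaystyle v\mapsto\int g(\tilde v)|v-\tilde v|^\eta\,d\tilde v$ by splitting the integration set $\mathbb R^3\times\mathbb R^3$ according to the relative sizes of $|v-\tilde v|$, $\langle v\rangle$, and $\langle\tilde v\rangle$. Write $\mathbb R^3\times\mathbb R^3=D_1\cup D_2\cup D_3$ with
\[
D_1=\{|v-\tilde v|\le\langle v\rangle/2\},\qquad D_2=\{|v-\tilde v|>\langle v\rangle/2,\ |\tilde v|\le 3|v|\},\qquad D_3=\{|v-\tilde v|>\langle v\rangle/2,\ |\tilde v|>3|v|\},
\]
and correspondingly decompose $\int g(\tilde v)|v-\tilde v|^\eta\,d\tilde v=F_1(v)+F_2(v)+F_3(v)$, where $F_i(v)$ is the integral over the $v$-slice of $D_i$. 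It then suffices to prove $\int\langle v\rangle^{-2\ell}F_i(v)^2\,dv\lesssim\|g\|_{L^{2,n}}^2$ for $i=1,2,3$. Two elementary geometric facts (both from the fact that $\langle\cdot\rangle$ is $1$-Lipschitz) are recorded first: on $D_1$ one has $\langle v\rangle\approx\langle\tilde v\rangle$ and $|v-\tilde v|\le\langle\tilde v\rangle$, while on $D_3$ one has $|v-\tilde v|\gtrsim\langle\tilde v\rangle$. I would also note at the outset that the hypotheses force $\ell+n>3+\eta$: if $n\ge3/2$ then $(3/2-n)_+=0$, so $\ell>3/2+\eta$ and hence $\ell+n>3+\eta$; if $n<3/2$ then $\ell>3/2+\eta+(3/2-n)$ gives the same.

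For $D_2$ and $D_3$ the estimates are routine once the kernel is replaced by a power of $\langle v\rangle$ or $\langle\tilde v\rangle$. On $D_2$, $|v-\tilde v|^\eta\lesssim\langle v\rangle^\eta$ (since $\eta<0$ and $|v-\tilde v|>\langle v\rangle/2$), so Cauchy--Schwarz in $\tilde v$ over $\{|\tilde v|\le3|v|\}$ yields $F_2(v)^2\lesssim\langle v\rangle^{2\eta}\|g\|_{L^{2,n}}^2\int_{|\tilde v|\le3|v|}\langle\tilde v\rangle^{-2n}\,d\tilde v\lesssim\|g\|_{L^{2,n}}^2\langle v\rangle^{2\eta+(3-2n)_+}$ (with a harmless logarithm when $n=3/2$), and $\int\langle v\rangle^{-2\ell+2\eta+(3-2n)_+}\,dv<\infty$ precisely because $\ell>3/2+\eta+(3/2-n)_+$. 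On $D_3$, $|v-\tilde v|^\eta\lesssim\langle\tilde v\rangle^\eta$, so Cauchy--Schwarz in $\tilde v$ gives $F_3(v)^2\lesssim\|g\|_{L^{2,n}}^2\int_{|\tilde v|>3|v|}\langle\tilde v\rangle^{2\eta-2n}\,d\tilde v\lesssim\|g\|_{L^{2,n}}^2\langle v\rangle^{3+2\eta-2n}$, where $n>3/2+\eta$ is used for convergence, and $\int\langle v\rangle^{-2\ell+3+2\eta-2n}\,dv<\infty$ because $\ell+n>3+\eta$.

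The only delicate term is $F_1$, which I expect to be the main obstacle: here $|v-\tilde v|^\eta$ is genuinely singular, and when $\eta\le-3/2$ the squared kernel $|v-\tilde v|^{2\eta}$ fails to be locally integrable, so the naive move of pulling $\|g\|_{L^{2,n}}$ out of the integral and being left with $\int\langle\tilde v\rangle^{-2n}|v-\tilde v|^{2\eta}\,d\tilde v$ breaks down. (This is exactly the mechanism confining the $L^2$-based arguments of previous works to $\gamma+2s>-3/2$.) The fix is to apply Cauchy--Schwarz on $D_1$ with respect to the \emph{positive measure} $|v-\tilde v|^\eta\,d\tilde v$ rather than Lebesgue measure, so that only one copy of the singular kernel has to be integrated:
\[
F_1(v)^2\le\Big(\int_{|v-\tilde v|\le\langle v\rangle/2}|v-\tilde v|^\eta\,d\tilde v\Big)\Big(\int_{|v-\tilde v|\le\langle v\rangle/2}g(\tilde v)^2|v-\tilde v|^\eta\,d\tilde v\Big)\lesssim\langle v\rangle^{\eta+3}\int_{|v-\tilde v|\le\langle v\rangle/2}g(\tilde v)^2|v-\tilde v|^\eta\,d\tilde v,
\]
using only $\eta>-3$. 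Multiplying by $\langle v\rangle^{-2\ell}$, integrating in $v$, and applying Fubini's theorem (the integrand is nonnegative) to integrate in $v$ first, the inner integral becomes $\int_{|v-\tilde v|\le\langle v\rangle/2}\langle v\rangle^{-2\ell+\eta+3}|v-\tilde v|^\eta\,dv$; on this set $\langle v\rangle\approx\langle\tilde v\rangle$ and $|v-\tilde v|\le\langle\tilde v\rangle$, so this is $\lesssim\langle\tilde v\rangle^{-2\ell+\eta+3}\cdot\langle\tilde v\rangle^{\eta+3}=\langle\tilde v\rangle^{-2\ell+2\eta+6}$. Since $\ell+n>3+\eta$ gives $-2\ell+2\eta+6<2n$, the remaining integral $\int g(\tilde v)^2\langle\tilde v\rangle^{-2\ell+2\eta+6}\,d\tilde v\le\|g\|_{L^{2,n}}^2$, which finishes the proof. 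The only bookkeeping nuisances are the borderline logarithmic factors (e.g.\ at $n=3/2$), but in each case they are absorbed by a strict inequality among the exponents that is available from the hypotheses.
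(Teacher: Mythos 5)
Your proof is correct and follows essentially the same route as the paper's: the key step for the singular near-diagonal region (Cauchy--Schwarz with respect to the measure $|v-\tilde v|^{\eta}\,d\tilde v$, so only one copy of the kernel is integrated, followed by Fubini and $\vv\approx\langle\tilde v\rangle$) is exactly the paper's treatment of its term $I_1$. The only difference is cosmetic: the paper handles the far region in a single Cauchy--Schwarz with the weight $\langle\tilde v\rangle^{\pm n}$, whereas you split it further according to whether $|\tilde v|\lesssim|v|$, which just makes the exponent bookkeeping (and the avoidance of the singularity of $|v-\tilde v|^{2\eta}$ for small $|v|$) more explicit.
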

\begin{proof}
	For succinctness, we let $A(v) = |v|^{\eta}$ and, without loss of generality we assume that $g \geq 0$.  First, we decompose the integral on the left hand side yielding
	\[
	\label{e.w3261}
	\begin{split}
	\int
	\vv^{-2\ell} (g*A)^2\, dv
	&\leq \int
	\vv^{-2\ell}
	\Big(\int_{B_{R_v}(v)} g(v') A(v-v')\,dv'
	\Big)^{2}\,dv
	\\&\quad+
	\int
	\vv^{-2\ell}
	\Big(\int_{B_{R_v}^c(v)} g(v') A(v-v')\,dv'
	\Big)^{2}\,dv
	= I_{1}+I_{2}
	.
	\end{split}
	\]
	For $I_{1}$, we use Cauchy-Schwarz inequality to obtain
	\[
	\begin{split}\label{c.w3262}
	I_{1}
	&\leq
	\int
	\vv^{-2\ell}
	\Big(\int_{B_{|v|/10}(v)} g(v')^{2}A(v-v')\,dv'
	\Big)
	\Big(\int_{B_{|v|/10}(v)} A(v-v')\,dv'
	\Big)\,dv
	\\&\lesssim
	\int
	\vv^{-2\ell+3+\eta}
	\Big(\int_{B_{|v|/10}(v)} g(v')^{2}A(v-v')\,dv'
	\Big)\,dv
	.
	\end{split}
	\]
	For $v'\in B_{|v|/10}(v)$, we have $\vvp \approx \vv$ and 
	$v\in B_{|v'|/2}(v')$.  Therefore,
	\[
	\begin{split}\label{c.w3263}
	&I_{1}
	\lesssim
	\int
	|g(v')|^{2}
	\int_{B_{|v'|/2}(v')} \vv^{-2\ell+3+\eta}A(v-v')\,dv
	\,dv'
	\\&\lesssim
	\int
	|g(v')|^{2}\vvp^{-2\ell+3+\eta}
	\int_{B_{|v'|/2}(v')} A(v-v')\,dv
	\,dv'
	\lesssim
	\int
	g(v')^{2}\vvp^{-2\ell+2(3+\eta)}\,dv'
	\lesssim
	\|g\|_{L^{2,n}}^2
	,
	\end{split}
	\]
	where we used that $-\ell+(3+\eta)\leq n$.
	For $I_{2}$, we apply the Cauchy-Schwarz inequality to find
	\[
	\begin{split}
	I_{2}
	&\leq
	\int
	\vv^{-2\ell}
	\Big(\int_{B^{c}_{|v|/10}(v)} \vvp^{2n}|g(v')|^{2}\,dv'
	\Big)
	\Big(\int_{B^{c}_{|v|/10}(v)} \vvp^{-2n}|v-v'|^{2\eta}\,dv'
	\Big)\,dv
	\\&\lesssim
	\|g\|_{L^{2,n}}^2
	\int
	\vv^{-2\ell}
	\Big(\int_{B^{c}_{|v|/10}(v)} \vvp^{-2n}|v-v'|^{2\eta}\,dv'
	\Big)\,dv
	\lesssim
	\|g\|_{L^{2,n}}^2
	\int
	\vv^{-2\ell+(3-2n)_+ + 2\eta}\,dv
	.
	\end{split}
	\]
	We conclude by using the conditions on $n$ and $\ell$. These were also used in the last inequality. Combining the estimates of $I_1$ and $I_2$ finishes the proof.
\end{proof}

We are now able to prove \Cref{p:thm_2.4}.(iii).  
\begin{proof}[Proof of \Cref{p:thm_2.4}.(iii)]
The proof is somewhat close to that of \cite[Proposition~3.1.(i)]{HST_boltz_wp}, so we omit details where steps are similar.  We may, without loss of generality, assume that $\alpha \in (0,1) \cup (1,2)$.  If not, we may simply take $\alpha' < \alpha$ such that $\alpha' \in (0,1) \cup (1,2)$ and use that $C^{\alpha} \xhookrightarrow{} C^{\alpha'}$.  Finally, the proof is simpler when $\alpha < 1$; hence, we consider only the case $\alpha \in (1,2)$.

We begin with an annular decomposition: let $A_{k}(v)=B_{2^{k}|v|} (v)\backslash B_{2^{k-1}|v|}(v)$ and write:
\be
	Q_\s(g,f)
		=
		\sum_{k\in \mathbb Z}
		\int_{A_{k}(v)} K_{g}(v,v') (f(v')-f(v))\,dv'
		.
\ee
Let $\bar{\mu}=n+5/2 +(3/2-n)_+ +\alpha + \gamma+\epsilon$.

\smallskip
\noindent
{\bf Step One: estimating the sum for any $k\leq 1$.}  By using a Taylor expansion, we see
\[
	f(v')-f(v)
		=
		((Df)(\xi_{v',v})-(Df)(v))\cdot (v'-v)+(Df)(v)\cdot (v'-v)
		,
\]
where $\xi_{v,v'} = tv' + (1-t)v$ for some $t \in [0,1]$. Thus, by \Cref{p.w3311}.(\ref{l.is3.4}) and (\ref{l.is3.7}),
\[
	\begin{split}
		&\Big|\int\limits_{A_{k}(v)} K_{g}(v,v') (f(v')-f(v))\,dv'\Big|
		\lesssim
			\vv^{-\bar{\mu}}(2^{k}|v|)^{\alpha-2s}
			\|\langle \cdot \rangle^{\bar{\mu}}Df\|_{C^{\alpha}}
			\int |g(v')||v-v'|^{\gamma+2s}\,dv'
			.
	\end{split}
\]
Recalling that $\alpha - 2s > 0$, by assumption, we have that $|v|^{\alpha - 2s} \leq \vv^{\alpha-2s}$.  Hence,
\begin{align*}
	&\int\Big(\int_{A_{k}(v)}\vv^{n} K_{g}(v,v') (f(v')-f(v))\,dv'\Big)^2\,dv
		\\&\quad\lesssim
		2^{2k(1+\alpha-2s)} \|\langle \cdot \rangle^{\bar{\mu}}Df\|_{C^{\alpha}}^{2}
		\int	
		\vv^{-2(\bar{\mu} - n - 1 - \alpha + 2s)}
		\left(\int |g(v')||v-v'|^{\gamma+2s}\,dv'
		\right)^{2}\,dv
		.
\end{align*} 
We are now in a position to apply the weighted Young's convolution inequality \Cref{l: Weighted Young}.  Indeed, by construction, $\ell:=\bar\mu - n - 1 - \alpha + 2s$ and $n$ satisfy the conditions of \Cref{l: Weighted Young} so that
\[
	\int\Big(\int_{A_{k}(v)}\vv^{n} K_{g}(v,v') (f(v')-f(v))\,dv'\Big)^2\,dv
		\lesssim 2^{2k(1+\alpha-2s)} \|\langle \cdot \rangle^{\bar{\mu}}Df\|_{C^{\alpha}}^{2} \|g\|_{L^{2,n}}^2.
\]

\smallskip
\noindent
{\bf Step Two:  estimating the sum for $k \geq 0$ when $|v'| \geq \vv/2$}. By \Cref{p.w3311}.(\ref{l.is3.4}),
\[
	\begin{split}
		\Big|\int\limits_{A_{k}\backslash B_{\vv/2}} &K_{g}(v,v') (f(v')-f(v))\,dv'\Big|
		\lesssim
		\vv^{-m}\|f\|_{L^{\infty,m}}
		\int\limits_{A_{k}\backslash B_{\vv/2}} |K_{g}(v,v')| \,dv'
		\\&\lesssim
		\vv^{m}\|f\|_{L^{\infty,m}}(2^{k}\vv)^{-2s}
		\left(\int|g(v')||v-v'|^{\gamma+2s}\,dv' \right)		
		.
	\end{split}
\]
Then, similar to Step One, we apply \Cref{l: Weighted Young} to obtain
\begin{equation*}
	\begin{split}
		&\int \vv^{2n}\left|\sum_{k\geq 0}\int_{A_{k}\backslash B_{\vv/2}} K_{g}(v,v') (f(v')-f(v))\,dv'\right|^2\,dv
		\\&\lesssim
		\|f\|_{L^{\infty,m}}^{2}
		\Big(\sum_{k\geq 0}2^{-2ks}\Big)^{2}
		\int \vv^{2(n-m-2s)}\left(\int |g(v')||v-v'|^{\gamma+2s}\,dv'\right)^{2}\,dv
		\lesssim
		\|f\|_{L^{\infty,m}}^{2}
		\|g\|_{L^{2,n}}^{2}		
		,
	\end{split}
\end{equation*}
where we used $n>3/2+\gamma+2s$ and $m>3/2+\gamma+(3/2-n)_{+}$.

\smallskip

\noindent{\bf Step Three: estimating the sum for $k \geq 0$ when $|v|\leq 10$ and $|v'| \leq \vv/2$.} This is similar to Step One.  The benefit is we are integrating over a compact set in $v$.  As such, we omit the proof and simply state that
\begin{align*}
	\int_{B_{10}}\Bigg(&\sum_{k\geq 0 }\int_{A_{k}\cap B_{\vv/2}}\vv^{n} K_{g}(v,v') (f(v')-f(v))\,dv'\Bigg)^2\,dv
		\\&=
		\int_{B_{10}}\Bigg(\int_{B_{\vv/2}\setminus B_{|v|/2}}\vv^{n} K_{g}(v,v') (f(v')-f(v))\,dv'\Bigg)^2\,dv
		\lesssim
		\|Df\|_{C^{\alpha}}^{2}
		\|g\|_{L^{2,n}}^{2}
		.
\end{align*}
Hence, we proved Step Three.
		
		\smallskip
		
\noindent{\bf Step Four: estimating the sum for $k \geq 0$ when $|v|\geq 10$ and $|v'| \leq \vv/2$.}    For any $|v|\geq 10$,
\[
	\begin{split}
		\Big|\sum_{k\geq 0}&\int_{A_{k}\cap B_{\vv/2}} K_{g}(v,v') (f(v')-f(v))\,dv'\Big|\\
		&\lesssim
		\int_{ B_{\vv/2}} K_{|g|}(v,v') |f(v')|\,dv'
		+
		\int_{ B_{\vv/2}} K_{|g|}(v,v') |f(v)|\,dv'
		=
		I_{1} + I_{2}
		.
	\end{split}
\]
For $I_{2}$, we notice that $B_{\vv/2} \subseteq (B_{2\vv}(v) \backslash B_{\vv/4}(v))$ due to the fact that $|v|\geq 10$. Then by \Cref{p.w3311}.\eqref{l.is3.4}, we have
\[
	\begin{split}
		\int_{ B_{\vv/2}} K_{|g|}(v,v') \,dv'
		&\lesssim
		\int_{ B_{2\vv}(v) \backslash B_{\vv/4}(v)} K_{|g|}(v,v')\,dv'
		\lesssim
		\vv^{-2s} \int|g(v')||v-v'|^{\gamma+2s}\,dv'
		.
	\end{split}
\]
Applying the weighted Young's inequality \Cref{l: Weighted Young} yields
\[
	\begin{split}
		\int \vv^{2n} I_{2}^{2}\,dv
		&\lesssim
		\int \vv^{2(n-2s)}|f(v)|^{2} \left(\int |g(v')||v-v'|^{\gamma+2s}\,dv' \right)^{2}\,dv
		\\&\lesssim
		\|f\|_{L^{\infty,m}}^{2}
		\int \vv^{-2m+2n-4s} \left(\int |g(v')||v-v'|^{\gamma+2s}\,dv' \right)^{2}\,dv
		\lesssim
		\|f\|_{L^{\infty,m}}^{2}
		\|g\|_{L^{2,n}}^{2}
	,
	\end{split}
\]
as desired. 
For $I_{1}$, the proof is omitted as it is exactly as in \cite[Proposition~3.1.(i)]{HST_boltz_wp}.  This finishes the proof.
\end{proof}

\subsubsection{Proof of \Cref{p:thm_2.4}.(iv)}

	\begin{proof}  Without loss of generality, we assume that $f,g\geq 0$.   Let $F=\vv^{n}f(v)$. We see
	\[
	\begin{split}
	\int \vv^{2n} Q_{s}(g,f)f\,dv
	=
	\int F Q_{s}(g,F)\,dv
	+
	\int F[\vv^{n} Q_{s}(g,f)-Q_{s}(g,F)]\,dv
	=
	I_{1}+I_{2}
	.
	\end{split}
	\]
	We further decompose $I_1$ into three parts:
	\[
	\begin{split}
	I_{1}
	&=
	-\int [F(v)-F(v')]^{2}K_{g}(v,v')\,dv'dv
		+
		\int \int [K_{g}(v,v')-K_{g}(v',v)]F(v)F(v')\,dv'\,dv\\
	&\qquad
		- \int [K_{g}(v,v')-K_{g}(v',v)]F(v')^2\,dv'\,dv
		= I_{11} + I_{12} + I_{13}.
	\end{split}
	\]
	The first term, $I_{11}$, has a good sign (and is used for cancellation below). 
The integrand in $I_{12}$ is antisymmetric with respect to the ``pre-post change of variables'' $(v,v') \mapsto (v',v)$, so $I_{12}=0$.  To estimate $I_{13}$, we use \Cref{p.w3311}.\eqref{l.is3.6}.  Hence, we find
\[
	|I_{12}|
	\lesssim
		\int F(v') \int g(z) |z-v'|^{\gamma + 2s} dz dv'
	\lesssim
		\|g\|_{L^{\infty,m}} \|F\|_{L^2}
		= \|g\|_{L^{\infty,m}} \|f\|_{L^{2,n}}.
	\]
	Here we used that $m > 3 + \gamma + 2s$ and $\gamma + 2s \leq 0$.  This concludes the bound on $I_1$.

	For $I_{2}$, we apply Young's inequality to find
	\[
	\begin{split}
	&I_{2}
	=
	\int F(v)f(v')K_{g}(v,v')(\vv^{n}-\vvp^{n})\,dv' dv
	\\&=
	\int (F(v)-F(v'))f(v')K_{g}(v,v')(\vv^{n}-\vvp^{n})\,dv' dv
	+
	\int F(v')f(v')K_{g}(v,v')(\vv^{n}-\vvp^{n})\,dv' dv
	\\&\leq
	-\frac{1}{2}I_{11}
	+
	\frac{1}{2}\int f^{2}(v')K_{g}(v,v')(\vv^{n}-\vvp^{n})^{2}\,dv' dv
	+
	\int F(v')f(v')K_{g}(v,v')(\vv^{n}-\vvp^{n})\,dv' dv.
	\end{split}
	\]
	Define the last two integrals to be $I_{21}$ and $I_{22}$.  The argument for $I_{21}$ is similar to and easier than $I_{22}$; hence, we omit it.
	
	We now bound $I_{22}$.  To do so, we split the integral into domains of integration $\mathcal D$, $\mathcal D^c \cap \{|v| \leq 10 |v'|\}$, and $\mathcal D^c \cap \{|v| \geq 10 |v'|\}$, where $\mathcal D = \{(v,v'): 10|v-v'| \leq \min\{\vv, \vvp\}\}$.  We denote the resulting integrals $I_{221},$ $I_{222}$, and $I_{223}$, respectively.  
	
	Considering $I_{221}$ first, we use a Taylor expansion, \Cref{p.w3311}.\eqref{l.is3.72}, \Cref{l:new kernel}, and the fact that $\vv \approx \vvp$ to find, for $\xi$ between $v$ and $v'$
	\[
	\begin{split}
	|I_{221}|
	&\leq \int \frac{F(v')^2}{\vvp} \int\limits_{B_{\vvp/2}(v')}K_g(v,v') \left[(v-v') \cdot v' n\vvp^{n-2} + \frac{n \langle \xi\rangle^{n-2}}{2}(v-v') \cdot \left(\Id + \frac{\xi\otimes\xi}{|\xi|^2}\right)(v-v')\right] dv dv'\\
	&\lesssim \int \frac{F(v')^2}{\vvp} \Big|\int\limits_{B_{\vvp/2}(v')}K_g(v,v') (v-v') dv \Big| dv'
	+ \int \frac{F(v')^2}{\vvp^2} \int\limits_{B_{\vvp/2}(v')}K_g(v,v') |v-v'|^2 dv dv'\\
	&\lesssim \int \frac{F(v')^2}{\vvp} \int g(w)|v'-w|^{1+\gamma} dw dv'
	+ \int \frac{F(v')^2}{\vvp^{2s}} \int g(w) |v'-w|^{\gamma+2s} dw dv' 
	\lesssim \|g\|_{L^{\infty, m}} \|f\|_{L^{2,n}}^2.
	\end{split}
	\]
	Above, we used that $m > 3 + \gamma + 2s$.
	
	Next we consider $I_{222}$.  In this case $\vv \lesssim \vvp$; hence, using that \Cref{p.w3311}.\eqref{l.is3.4} and that $m > 3 + \gamma + 2s$ yields
	\[
	|I_{222}|
	\lesssim \int F(v')^2 \int\limits_{B_{\vvp/2}^c(v')} K_g(v,v') dv dv'
	\lesssim \int \frac{F(v')^2}{\vvp^{2s}} \int g(w) |v'-w|^{\gamma + 2s} dw dv'
	\lesssim \|g\|_{L^{\infty, m}}\|f\|_{L^{2,n}}^2.
	\]
	Finally, we handle $I_{223}$. Indeed, we use that $\vvp \lesssim \vv$, the definition of $\mathcal D$, and \Cref{l: w0524} to get
	\[\label{e.w0409}
	\begin{split}
	\Big|\int\limits_{\mathcal D^c \cap \{|v| \geq 10 |v'|\}} &F(v')f(v')K_{g}(v,v')(\vv^{n}-\vvp^{n})\,dv' dv\Big|
		\\&\lesssim 
			\int\limits_{\mathcal D^c \cap \{|v| \geq 10 |v'|\}} F(v')f(v') K_g(v,v')  \vv^n\, dv' dv
	\\&\lesssim 
		\int\limits_{\mathcal D^c \cap \{|v| \geq 10 |v'|\}} F(v')f(v') \frac{|v-v'|^{3+2s}}{\vvp^{3+2s}} K_g(v,v')  \vv^n\, dv' dv
	\\&\lesssim
	\|g\|_{L^{\infty,m}} \int F(v') f(v') \int\limits_{\{10 |v'|\leq |v|\}} \vv^{- m + 3 + \gamma + 2s + n} dv dv'
	\lesssim 
	\|g\|_{L^{\infty, m}} \|f\|_{L^{2,n}}^2.
	\end{split}
	\]
	In the last inequality, we used that $m > n + 6 + \gamma + 2s$.  This concludes the proof. 
\end{proof}

\subsection{Proof of \Cref{p:symmetry}}

In order to prove \Cref{p:symmetry}, we first state a useful estimate that follows from work in \cite{IS2020weakharnack}.
\begin{lemma}\label{l:w0513}
	For any measurable $g$, if $\gamma + 2s \leq 0$ and $\epsilon>0$, then
	\begin{equation}
		\begin{split}
			\Big|\int K_{g}(f'-f)^{2}\,dv'dv\Big|
			\lesssim
			\|g\|_{L^{\infty, 3+\gamma + 2s + \epsilon}}
			\|f\|_{H^s}^{2}
			.
		\end{split}
	\end{equation}
\end{lemma}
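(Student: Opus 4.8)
The plan is to reduce the quadratic expression $\int K_g(v,v')(f(v')-f(v))^2\,dv'dv$ to two quantities that are already under control: an integral of the form $\int Q_\s(g,f)\,f\,dv$, handled by \Cref{p:thm_2.4}.(i), and an integral against the \emph{antisymmetric} part of $K_g$, handled by \Cref{p.w3311}.\eqref{l.is3.6} together with an elementary weighted convolution bound. We may assume $f$ is smooth and rapidly decaying and recover the general case by density, since the right-hand side is finite and every term below depends continuously on $\|f\|_{H^s}$. The starting point is the algebraic identity
\[
	\int K_g(v,v')(f(v')-f(v))^2\,dv'dv
		= -2\int Q_\s(g,f)\,f\,dv
			- \int f(v)^2 \Big(\int \big[K_g(v,v')-K_g(v',v)\big]\,dv'\Big)dv .
\]
To obtain it, I would write $(f(v')-f(v))^2 = (f(v')-f(v))f(v') - (f(v')-f(v))f(v)$; the second piece integrates to $\int Q_\s(g,f)\,f\,dv$, while relabeling $v\leftrightarrow v'$ in the first piece turns it into the same expression with $K_g(v',v)$ in place of $K_g(v,v')$. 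Collecting terms and using $K_g(v,v')+K_g(v',v) = 2K_g(v,v') - [K_g(v,v')-K_g(v',v)]$ leaves, besides $-2\int Q_\s(g,f)f$, the term $\int [K_g(v,v')-K_g(v',v)](f(v')-f(v))f(v)\,dv'dv$; expanding $f(v')-f(v)$ here, the cross term $\int[K_g(v,v')-K_g(v',v)]f(v)f(v')\,dv'dv$ vanishes because the kernel is antisymmetric in $(v,v')$ while $f(v)f(v')$ is symmetric, which yields the displayed identity. (These manipulations are of the same type as in the proof of \Cref{p:thm_2.4}.(iv) and are understood, where necessary, in the principal value sense.)

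For the first term, note that $\gamma\in(-3,0)$ since $\gamma+2s\le 0$ and $s>0$, so \Cref{p:thm_2.4}.(i) applies with $h=f$ and $\theta=s$ (admissible, as $s\in(0,2s)$), giving $\big|\int Q_\s(g,f)\,f\,dv\big| \lesssim \|g\|_{L^{\infty,3+\gamma+2s+\epsilon}}\|f\|_{H^s}^2$. For the second term, \Cref{p.w3311}.\eqref{l.is3.6} gives
\[
	\Big|\int f(v)^2 \int\big[K_g(v,v')-K_g(v',v)\big]dv'\,dv\Big|
		\lesssim \int f(v)^2 \int |g(z)|\,|z-v|^\gamma\,dz\,dv
		\le \|g\|_{L^{\infty,3+\gamma+2s+\epsilon}} \int f(v)^2 \int \frac{|z-v|^\gamma}{\langle z\rangle^{3+\gamma+2s+\epsilon}}\,dz\,dv .
\]
It then remains only to check the uniform bound $\sup_v \int \langle z\rangle^{-(3+\gamma+2s+\epsilon)}|z-v|^\gamma\,dz \lesssim 1$, after which the second term is $\lesssim \|g\|_{L^{\infty,3+\gamma+2s+\epsilon}}\|f\|_{L^2}^2 \le \|g\|_{L^{\infty,3+\gamma+2s+\epsilon}}\|f\|_{H^s}^2$, and adding the two estimates completes the proof.

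For the convolution bound I would split the $z$-integral into $\{|z|\ge 2|v|\}$, where $|z-v|\approx|z|$ so the integrand is $\lesssim\langle z\rangle^{-3-2s-\epsilon}$; $\{|z|\le 2|v|,\ |z-v|\le\langle v\rangle/2\}$, where $\langle z\rangle\approx\langle v\rangle$ and $\int|z-v|^\gamma$ over a ball of radius $\lesssim\langle v\rangle$ is $\lesssim\langle v\rangle^{3+\gamma}$; and $\{|z|\le 2|v|,\ |z-v|>\langle v\rangle/2\}$, where $|z-v|^\gamma\lesssim\langle v\rangle^\gamma$ and $\int_{|z|\le 2|v|}\langle z\rangle^{-3-\gamma-2s-\epsilon}\,dz\lesssim\langle v\rangle^{(-\gamma-2s-\epsilon)_+}$. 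Using $\gamma\in(-3,0)$ and $\gamma+2s+\epsilon>-3$, each piece is $\lesssim\langle v\rangle^{-\min(-\gamma,\,2s+\epsilon)}\lesssim 1$.

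The main obstacle I anticipate is getting the bookkeeping of the symmetrization exactly right — in particular tracking which variable the antisymmetric-kernel term is weighted against and justifying that the cross term vanishes — and, secondarily, the convolution estimate, which is not a one-liner: in the very soft regime the exponent $3+\gamma+2s+\epsilon$ may be below $3$, so the naive bound fails and the casework above is genuinely needed. Alternatively, the whole lemma can be read off directly from the mapping estimates of \cite{IS2020weakharnack} after normalizing $K_g$ by $\|g\|_{L^{\infty,3+\gamma+2s+\epsilon}}$ so that it satisfies their structural hypotheses uniformly, but the route above has the advantage of relying only on results already established here.
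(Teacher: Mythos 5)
Your proof is correct, but it takes a genuinely different route from the paper's. The paper disposes of this lemma in two lines: after normalizing, $\hat K_g = K_g/\|g\|_{L^{\infty,3+\gamma+2s+\epsilon}}$ satisfies the structural hypotheses (4.2)--(4.4) of \cite[Section~4]{IS2020weakharnack} uniformly in $v$, and the estimate is then exactly \cite[Lemma~4.2]{IS2020weakharnack} --- i.e., the paper takes precisely the ``alternative'' you mention in your last sentence. Your symmetrization identity
$\int K_g(f'-f)^2 = -2\int Q_\s(g,f)f\,dv - \int f(v)^2\bigl(\int[K_g(v,v')-K_g(v',v)]\,dv'\bigr)dv$,
combined with \Cref{p:thm_2.4}.(i) at $\theta=s$ and \Cref{p.w3311}.\eqref{l.is3.6}, is a valid self-contained substitute: the same symmetrization (run in the opposite direction, with the same antisymmetry argument killing the cross term $\int[K_g(v,v')-K_g(v',v)]f(v)f(v')$) appears verbatim in the paper's own proof of \Cref{p:thm_2.4}.(iv), and your convolution bound $\sup_v\int\langle z\rangle^{-(3+\gamma+2s+\epsilon)}|z-v|^\gamma\,dz\lesssim1$ is correct (including the casework needed when $3+\gamma+2s+\epsilon<3$) and is in effect the computation the paper's citation hides inside ``(4.2)--(4.4) hold uniformly.'' What your route buys is independence from the black-boxed quadratic-form estimate of \cite{IS2020weakharnack}, relying only on results already proved in the paper; what it costs is exactly the care you flag: the intermediate pieces of the symmetrization are not absolutely convergent near the diagonal, so the identity should be derived for the symmetrically truncated kernels $K_g\1_{\{|v-v'|>\delta\}}$ (under which the antisymmetry cancellation is legitimate for each $\delta$) for smooth, rapidly decaying $f$, and then one passes to the limit and concludes by density --- which is what your principal-value caveat amounts to.
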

\begin{proof}
	Recall that $\hat K_g$, defined in~\eqref{e.c6151}, 
	satisfies the conditions (4.2), (4.3), and (4.4) in \cite[Section 4]{IS2020weakharnack} uniformly in $v$. Thus, applying \cite[Lemma 4.2]{IS2020weakharnack}, we find
	\[
	\begin{split}
	\Big|\int K_{g}(f'-f)^{2}\,dv'dv\Big|
	=
	\|g\|_{L^{\infty, 3+\gamma+2s+\epsilon}}
	\Big|\int \hat K_g(v,v') (f'-f)^{2}\,dv'dv\Big|
	\lesssim \|g\|_{L^{\infty,3 + \gamma + 2s + \epsilon}} \|f\|_{H^s}^2,
	\end{split}
	\]
	which concludes the proof.
\end{proof}

%
%
%
%
%
Now we prove \Cref{p:symmetry}. 
\begin{proof}[Proof of \Cref{p:symmetry}]
	We consider only the case $\partial=\partial_{v_i}$ for $i \in \{1,2,3\}$. The case when $\partial=\partial_{x_i}$ is similar and simpler as it commutes with $\vv^{2n}$. 
	First, let $F=\vv^{n}f$. Then
	\begin{equation*}
		\begin{split}
			\int \vv^{2n}Q_\s(g,f)\partial f\,dvdx
			&=
			\int [\vv^{n}Q_\s(g,f)-Q_\s(g,\vv^{n}f)]\vv^{n}\partial f\,dvdx
			-
			\int Q_\s(g,F)fnv_{i}\vv^{n-2}\,dvdx
			\\&\quad+
			\int Q_\s(g,F)\partial F\,dvdx
			=
			I_{1}+I_{2}+I_{3}
			.
		\end{split}
	\end{equation*}
	For $I_{1}$, we apply the commutator estimate \Cref{p:commutator} to get
	\begin{equation}
		\begin{split}\label{e.w04111}
			|I_{1}|
			&\lesssim
			\int
			(\|f\|_{L_v^{2,n+3/2 + \epsilon}} + \|f\|_{H_v^{2s -1 + \mu,\mu+ n + \gamma + 2s}})
			\|g\|_{L_v^{\infty, m}}
			\|\partial f\|_{L_v^{2,n}}
			\,dx
			\\&\lesssim
			\int
			(\|f\|_{L_v^{2,n+3/2 + \epsilon}} + \|f\|_{H_v^{2s -1 + \mu,\mu+ n + \gamma + 2s}})
			\|g\|_{L_v^{\infty, m}}
			\|f\|_{H_v^{1,n}}
			\,dx
			\\&\lesssim
			\|g\|_{L^{\infty, m}}
			\left(
			\|f\|_{L^{2,n + 3/2 + \epsilon}}
			+
			\|f\|_{H^{2s -1 + \mu,\mu+n + \gamma + 2s}}
			\right)
			\|f\|_{H^{1,n}}.
		\end{split}
	\end{equation}
	
	To estimate $I_{2}$, we apply \Cref{p:thm_2.4}.(i) with $\theta=1$ if $s > 1/2$ or $\theta = 2s-1+\mu$ if $s \leq 1/2$ to find
	\be\label{e.w04112}
	\begin{split}
	|I_{2}|
	&\lesssim
	\int
	\|g\|_{L_v^{\infty, m}}  \|F\|_{H_v^{\theta}}\|fnv_{i}\vv^{n-2}\|_{H_v^{2s-\theta}}
	\,dx
	\\&\lesssim
	\int
	\|g\|_{L_v^{\infty, m}}  \|f\|_{H_v^{\theta,n}}\|f\|_{H_v^{2s-\theta, n-1}}
	\,dx
	\lesssim \|g\|_{L^{\infty, m}}  \|f\|_{H^{\theta,n}}\|f\|_{H^{2s-\theta, n-1}}.
	\end{split}
	\ee
	Using the choice of $\theta$, the right hand side above is less than or equal to (up to a constant) the right hand side of~\eqref{e.w04111}.  
	
	We decompose $I_3$ into two parts:
	\[
		\begin{split}
			I_{3}
			&=
			\int Q_\s(g,F)\partial F\,dv dx\\
			&=
			\int K_{g}(F'-F)(\partial F - (\partial F)')\,dv'dv dx
			+
			\int (K_{g}-K'_{g})(F'-F)\partial F\,dv'dv dx
			=
			I_{31}+I_{32}
			.
		\end{split}
	\]
	For $I_{31}$, we manipulate by integration-by-parts and apply \Cref{l:w0513} to find
\[
		\begin{split}
			|I_{31}|
			&=
			\Big|\int K_{g}(\partial+\partial')(F'-F)^{2}\,dv'dv dx\Big|
			=
			\Big|\int (\partial+\partial')K_{g}(F'-F)^{2}\,dv'dv dx\Big|
			\\&=
			\Big|\int K_{\partial g}(F'-F)^{2}\,dv'dvdx\Big|
			\lesssim
			\int \|\partial g\|_{L_v^{\infty, 3+\gamma + 2s + \epsilon}}
			\|F\|_{H_v^s}^{2} dx
			=
			\int \|\partial g\|_{L_v^{\infty, 3+\gamma + 2s + \epsilon}}
			\|f\|_{H_v^{s,n}}^{2} dx
			.
		\end{split}
\]
	Fix the conjugate exponents $p=3/2(1-s)$ and $q=3/(2s+1)$.  Applying H\"older's inequality and the Sobolev embedding theorem yields
	\[
	\begin{split}
	|I_{31}|
	&\lesssim 
	\|\partial g\|_{L^p_x L_v^{\infty,3+\gamma + 2s + \epsilon}} \|f\|_{L^{2q}_x H_v^{s,n}}^2\\
	&\lesssim 
	\|\partial g\|_{H^{(2s - 1/2)_+}_x H_v^{3/2+\epsilon,3+\gamma + 2s + \epsilon}} \|f\|_{H^{1-s}_x H_v^{s,n}}^2
	\lesssim 
	\|\partial g\|_{H^{3/2 + \epsilon + (2s-1/2)_+,3+\gamma + 2s + \epsilon}} \|f\|_{H^{1,n}}^2.
	\end{split}
	\]

The term $I_{32}$ is considered in \cite[Proposition 3.1.(iv), estimate of $I_2$]{HST_boltz_wp}.  A close inspection of the proof shows that it applies in our setting.  Hence, for simplicity, we cite directly that, for any $\mu \in (s,\min\{2s,1\})$,
	\begin{align*}\label{e.w04092}
		|I_{32}|
		\lesssim
		\| g\|_{C^{\mu, 3+\epsilon}}
		\|f\|_{H^{s,n + 3/2 + \epsilon + (\gamma + 2s + 1)_+}}
		\|f\|_{H^{1, n}}
		.
	\end{align*}
	Combining the above estimates of $I_{31}$ and $I_{32}$ together yields
	\begin{equation}\label{e.w04093}
		\begin{split}
			|I_{3}|
			\lesssim
			\|\partial g\|_{H^{3-s,3}} \|f\|_{H^{1,n}}^2
			+
			\| g\|_{C^{\mu, 3+\epsilon}}
			\|f\|_{H^{s,n + 3/2 + \epsilon + (\gamma + 2s + 1)_+}}
			\|f\|_{H^{1, n}}
			.
		\end{split}
	\end{equation}
	
	The proof is finished after combining~\eqref{e.w04111}, ~\eqref{e.w04112}, and~\eqref{e.w04093}.
\end{proof}



\section{A simple proof of local well-posedness when $0<s<1/2$: \Cref{t:C1_wellposed}}\label{s:simple}

Here we provide a short proof of local well-posedness when $s\in(0,1/2)$, taken as a standing assumption throughout the section even when not explicitly stated.  As many of the technical details are exactly the same as in the proof of \Cref{t.main}, we only outline the main points.  As the proof is the same for $k>1$, we show only the $k=1$ case.  Thus, we simplify the notation using $\tilde X^{m_0,m_1}$ in place of $\tilde X^{1,m_0,m_0}$ (the definition of $\tilde X^{k,m_0,m_1}$ is given in~\eqref{e.tilde_XY}).

The first step is to obtain a weighted $C^1$ estimate of $Q_s$.

\begin{lemma}\label{l:Qs_C1}
Let $m_1 > 3+ \gamma + 2s$ and $m_0$ sufficiently large depending only on $m_1$, $s$, and $\gamma$. The following inequality holds
\[
	\|Q_{\rm s}(g,f)\|_{L^{\infty,m_{1}}}
	\lesssim
	\|g\|_{L^{\infty,m_{1}}}
		\left(
			\|f\|_{L^{\infty,m_{0}}}
			+
			\|\nabla_{v}f\|_{L^{\infty,m_{1}}}
		\right).
\]
\end{lemma}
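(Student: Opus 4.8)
The plan is to bound $\langle v\rangle^{m_1}|Q_{\rm s}(g,f)(v)|$ pointwise in $v$, using the Carleman form $Q_{\rm s}(g,f)(v) = \int (f(v')-f(v))K_g(v,v')\,dv'$ and splitting the $v'$-integral into zones determined by $|v-v'|$; the regime $|v|\le R_0$ for a suitable absolute constant $R_0$ is trivial since there all weights are $\approx 1$, so the substance is the case $|v|\ge R_0$. The basic ingredients, all consequences of results already in the paper, are: (a) the elementary weighted convolution bound $\int |g(z)| |v-z|^{\gamma+2s}\,dz \lesssim \|g\|_{L^{\infty,m_1}}\langle v\rangle^{(\gamma+2s)_+}$ valid when $m_1 > 3+\gamma+2s$ (split according to whether $|v-z|$ is smaller or larger than $\langle v\rangle$); (b) summing \Cref{p.w3311}.\eqref{l.is3.4} over dyadic shells, $\int_{B_r(v)^c} K_g(v,v')\,dv'$ and $\int_{B_\rho(v)\setminus B_r(v)} K_g(v,v')\,dv'$ are both $\lesssim r^{-2s}\int|g(z)||v-z|^{\gamma+2s}\,dz$; and (c) \Cref{l:new kernel} with $\alpha=1$ (admissible precisely because $s<1/2$, so $\alpha>2s$), giving $\int_{B_r(v)}K_g(v,v')|v-v'|\,dv' \lesssim r^{1-2s}\int|g(z)||v-z|^{\gamma+2s}\,dz$. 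We will also use that for $|v|\ge R_0$ one has $\langle v'\rangle \approx \langle v\rangle$ whenever $v'\in B_1(v)$.

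Fix a constant $a>0$ with $a(1-2s)>(\gamma+2s)_+$ (possible since $s<1/2$), set $r_v=\langle v\rangle^{-a}<1$, and write $Q_{\rm s}(g,f)(v)=T_1+T_2+T_3$ with integration over $B_{r_v}(v)$, $B_1(v)\setminus B_{r_v}(v)$, and $B_1(v)^c$ respectively. For $T_1$, the mean value theorem gives $|f(v')-f(v)|\lesssim \langle v\rangle^{-m_1}\|\nabla_v f\|_{L^{\infty,m_1}}|v-v'|$; combined with (c) and (a) this yields $\langle v\rangle^{m_1}|T_1|\lesssim \langle v\rangle^{(\gamma+2s)_+-a(1-2s)}\|\nabla_v f\|_{L^{\infty,m_1}}\|g\|_{L^{\infty,m_1}}$, which is bounded by the choice of $a$. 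For $T_2$, bound crudely $|f(v')-f(v)|\le |f(v')|+|f(v)|\lesssim \langle v\rangle^{-m_0}\|f\|_{L^{\infty,m_0}}$ and apply (b) and (a) to get $\langle v\rangle^{m_1}|T_2|\lesssim \langle v\rangle^{m_1-m_0+2as+(\gamma+2s)_+}\|f\|_{L^{\infty,m_0}}\|g\|_{L^{\infty,m_1}}$, controlled once $m_0$ is large enough. For $T_3$ write $T_3=\int_{B_1(v)^c}f(v')K_g\,dv' - f(v)\int_{B_1(v)^c}K_g\,dv'$: the second term and the part of the first with $|v'|\ge\langle v\rangle/2$ are handled by $|f|\lesssim\langle v\rangle^{-m_0}\|f\|_{L^{\infty,m_0}}$ together with (b) and (a), again needing $m_0$ large; on $\{|v'|<\langle v\rangle/2\}$ one invokes \Cref{l: w0524} (with, e.g., $\theta=2/5$, admissible for $|v|\ge R_0$) to get $|K_g(v,v')|\lesssim |v-v'|^{-3-2s}\|g\|_{L^{\infty,m_1}}\langle v\rangle^{\gamma+2s+3-m_1}$, and combines it with $|v-v'|\gtrsim\langle v\rangle$ on this set and $\int_{|v'|<\langle v\rangle/2}|f(v')|\,dv'\lesssim\|f\|_{L^{\infty,m_0}}$ (needs $m_0>3$), obtaining $\langle v\rangle^{m_1}\times(\cdots)\lesssim\langle v\rangle^{\gamma}\|f\|_{L^{\infty,m_0}}\|g\|_{L^{\infty,m_1}}\lesssim \|f\|_{L^{\infty,m_0}}\|g\|_{L^{\infty,m_1}}$ since $\gamma<0$. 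Summing the three zones and the $|v|\le R_0$ contribution gives the claim, with the required lower bound on $m_0$ determined by $m_1$, $s$, $\gamma$ (through $a$).

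The one genuine obstacle is the near-diagonal zone. If $\gamma+2s>0$, estimating $|f(v')-f(v)|$ merely by $\langle v\rangle^{-m_1}\|\nabla_v f\|_{L^{\infty,m_1}}|v-v'|$ and applying \Cref{l:new kernel} on a \emph{fixed} ball produces a factor $\langle v\rangle^{\gamma+2s}$ (traceable to the hyperplanes $v+(v'-v)^\perp$ passing near the origin, where $K_g$ is largest) that the weight $\langle v\rangle^{m_1}$ cannot absorb. The resolution is the scale-dependent splitting radius $r_v=\langle v\rangle^{-a}$: at the very smallest scales the gain $r_v^{1-2s}$ afforded by $s<1/2$ outruns that growth, and at the intermediate scales between $r_v$ and $1$ the loss $r_v^{-2s}$ is paid off against the large polynomial decay $m_0$ of $f$ — which is exactly why $m_0$ must be taken large relative to $m_1,s,\gamma$. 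In the regime of primary interest $\gamma+2s<0$ the obstruction disappears: $(\gamma+2s)_+=0$, so one may simply take $r_v$ equal to a fixed small constant and the proof shortens accordingly.
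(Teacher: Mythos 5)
Your proof is correct and follows essentially the same route as the paper's: a $v$-dependent splitting radius chosen so that the gain $r^{1-2s}$ from \Cref{l:new kernel} with $\alpha=1$ (available since $s<1/2$) absorbs the growth of $\int |g(w)||v-w|^{\gamma+2s}\,dw$, the far region handled via \Cref{p.w3311}.\eqref{l.is3.4} and the strong decay $m_0$ of $f$, and the sub-region $|v'|\lesssim |v|$ treated with the refined kernel bound \Cref{l: w0524}. The only cosmetic difference is your use of three zones with radius $\langle v\rangle^{-a}$ plus a unit ball, versus the paper's two zones with radius $\langle v\rangle^{\mu}/2$ where $\mu$ may be positive.
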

\begin{proof}
Let $\mu = 1$ if $\gamma \leq -1$ and $\mu = \frac{-\gamma - 2s}{1-2s}$ otherwise.  Fix $r = \vv^\mu/2$.  We first decompose the integral into two parts:
\[
	|Q_{{\rm s}}(g,f)\vv^{m_{1}}|
	=
	\int \left|\vv^{m_1}(f(v') - f(v)) K_g(v,v')\right| dv'
	\leq I_1 + I_2,
\]
		where $I_1$ and $I_2$ are the integrals over $B_r(v)$ 
		and $B_r(v)^c$, respectively.
Applying \Cref{l:new kernel} and using that if $\xi \in B_r(v)$ then $\langle \xi\rangle \approx \vv$, we bound $I_1$ as
\be\label{e.c681}
	\begin{split}
		I_1
		&\lesssim
		      \|\nabla_{v}f\|_{L^{\infty,m_1}}
		\int_{ B_{r}(v)}  |v-v'| K_{|g|}(v,v') dv'
		\lesssim
			\|\nabla_{v}f\|_{L^{\infty,m_1}}
			r^{1-2s} \int |g(w)||v-w|^{\gamma + 2s} dw.
	\end{split}
\ee
We are finished after bounding the integral by $\vv^{\gamma + 2s} \|g\|_{L^{\infty, m_0}}$ and using the definition of $r$.

The first step to handle $I_2$ is to split it into the parts containing $f(v)$ and $f(v')$ via the triangle inequality.  Call these integrals $I_{21}$ and $I_{22}$, respectively.  Using \Cref{l.is3.4} again, we see that
\[
	\begin{split}
		I_{21}
			= \int_{B_r(v)^c} \vv^{m_1} |f(v)| K_{|g|}(v,v') dv'
			\lesssim \|f\|_{L^{\infty,m_0}} \vv^{-m_0} r^{-2s} \int g(z) |v-z|^{\gamma + 2s} dv'.
	\end{split}
\]
Bounding the last integral using $\|g\|_{L^{\infty,m_1}}$ and using the definition of $r$ finishes the estimate of $I_{21}$.

The last integral, that of $I_{22}$ requires further decomposition into $I_{221}$ and $I_{222}$ over the domains $B_r(v)^c \cap B_{\vv/2}^c$ and $B_r(v)^c \cap B_{\vv/2}$.  The former is easy to handle using
	\[
		|f(v')|
			\leq \|f\|_{L^{\infty,m_0}} \vvp^{-m_0}
			\lesssim \|f\|_{L^{\infty,m_0}} \vv^{-m_0}
	\]
	where we used that $\vvp \gtrsim \vv$.  The rest of the bound follows exactly as for $I_{21}$.
	
	As for $I_{222}$, notice that for such $v'$, $|v-v'| \approx \vv$.  We use this, along with \Cref{l: w0524}, to find
\be\label{e.c682}
	\begin{split}
		I_{222}
			&\lesssim \|g\|_{L^{\infty,m_1}} \int_{B_r(v)^c \cap B_{\vv/2}} \frac{\vv^{m_1} f(v')}{\vv^{3+2s}}  \vv^{3+\gamma+2s-m_1} dv'\\
			&\lesssim \|g\|_{L^{\infty,m_1}} \int_{B_r(v)^c \cap B_{\vv/2}} \vv^\gamma f(v') dv'
			\lesssim \|g\|_{L^{\infty,m_1}} \|f\|_{L^{\infty,m_0}}.
	\end{split}
\ee
Combining this with the above estimates finishes the proof.
\end{proof}

Next we give the key estimate for constructing a solution.  To that end, we present a proposition that plays the role of \Cref{p:existence} above.  Recall the space $\tilde Y^{m_0,m_1}$ from~\eqref{e.tilde_XY}.
\begin{proposition} [Propagation of the weighted $C^1$ bounds]
	\label{l03}
	Fix any $m_1 > 3 + \gamma + 2s$ and $m_0$ sufficiently large depending only on $m_1$, $\gamma$ and $s$.  Suppose that $f_{\rm in} \in \tilde X^{m_0,m_1}$, and $g, R \in \tilde Y^{m_0,m_1}_T$.  If $f$ solves~\eqref{e.w316} then, there is a constant $C>0$ depending only on $m$, $s$, and $\gamma$ such that
	\[
		\|f\|_{\tilde Y^{m_0,m_1}}
			\lesssim \exp\Big\{C\int_{0}^{T} \|g(t)\|_{\tilde X^{m_0,m_1}}dt\Big\}
				\left( \|f_{\rm in}\|_{\tilde X^{m_0,m_1}}
					+ T \|R\|_{\tilde Y_T^{m_0,m_1}}\right).
	\]
\end{proposition}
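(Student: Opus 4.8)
The plan is to establish the two pieces of the weighted $C^1$ norm—the bound on $\|f\|_{L^{\infty,m_0}}$ and the bound on $\|\nabla_{x,v}f\|_{L^{\infty,m_1}}$—by a maximum-principle argument, exactly in the spirit of the $L^{\infty,m}$ estimate~\eqref{e:w05081} but now also at the level of first derivatives. As in~\cite[Proposition~3.2]{HST_boltz_wp}, the mollification of $g$ and the cut-offs $\psi(\delta\cdot)$ entering $Q_{\epsilon,\delta}$ and $\mathcal{L}_{\sigma,\epsilon,\delta}$ contribute only lower-order commutators, so I would carry out the argument for $\epsilon=\delta=0$ (and with $g\ge 0$, as is the case in every application, so that $K_g\ge 0$) and then indicate the routine modifications. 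The first piece is then nothing but~\eqref{e:w05081} applied with $m=m_0$, using $\|g\|_{L^{\infty,\max\{m_0,3/2+\mu\}}}\le\|g\|_{\tilde X^{m_0,m_1}}$ and $\|\cdot\|_{L^{\infty,m_0}}\le\|\cdot\|_{\tilde X^{m_0,m_1}}$; write $\Phi$ for the resulting right-hand side, so that $\|f(t)\|_{L^{\infty,m_0}}\le\Phi$ on $[0,T]$.

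For the derivative bound I would differentiate~\eqref{e.w316} in $\partial\in\{\partial_{x_i},\partial_{v_i}\}$, using the translation-covariance identity $\partial_{v_i}Q(g,h)=Q(\partial_{v_i}g,h)+Q(g,\partial_{v_i}h)$, to see that $\partial f$ solves $\mathcal{L}_{\sigma,0,0}(\partial f)=\partial R+\sigma Q(\partial g,f)+\mathcal{T}$, where the transport commutator $\mathcal{T}$ vanishes when $\partial=\partial_{x_i}$ and equals $-\sigma\partial_{x_i}f$ when $\partial=\partial_{v_i}$. The crucial structural point is that the source involves only \emph{first} derivatives of $g$ and $f$, so, decomposing $Q=Q_\s+Q_\ns$ and applying \Cref{l:Qs_C1} to $Q_\s(\partial g,f)$ together with the elementary estimate $\|Q_\ns(\partial g,f)\|_{L^{\infty,m_1}}\les\|\partial g\|_{L^{\infty,m_1}}\|f\|_{L^{\infty,m_1}}$ (immediate from $\|S_\gamma * h\|_{L^\infty}\les\|h\|_{L^{\infty,3+\gamma+\epsilon}}$ and $m_1>3+\gamma+2s$), one gets $\|\partial R+\sigma Q(\partial g,f)+\mathcal{T}\|_{L^{\infty,m_1}}\les\|R\|_{\tilde X^{m_0,m_1}}+(1+\|g\|_{\tilde X^{m_0,m_1}})\|f\|_{\tilde X^{m_0,m_1}}$.

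The heart of the proof is the maximum-principle step for $h:=\vv^{m_1}\partial f$; set $P(t):=\|h(t)\|_{L^\infty_{x,v}}$. At a point $(x_0,v_0)$ where $h(t,\cdot,\cdot)$ attains a positive maximum (the negative-minimum case being symmetric under $\partial f\mapsto-\partial f$), one has $\nabla_{x,v}h=0$ and $\Delta_{x,v}h\le 0$, so multiplying the $\partial f$-equation by $\vv^{m_1}$ kills the transport term, turns the viscosity into a term $\les\vv^{-2}h\le CP(t)$, and leaves the source (bounded in Step two) plus $\sigma\vv^{m_1}Q(g,\partial f)(x_0,v_0)$, which must be bounded \emph{from above}. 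The $Q_\ns$ part is harmless: $|(S_\gamma * g)(v_0)|\les\|g\|_{L^{\infty,m_1}}$ and $\partial f(v_0)=\vvO^{-m_1}P(t)$ give $\vvO^{m_1}Q_\ns(g,\partial f)(v_0)\les\|g\|_{L^{\infty,m_1}}P(t)$. For $Q_\s(g,\partial f)(v_0)=\int(\partial f(v')-\partial f(v_0))K_g(v_0,v')\,dv'$ I would mirror the proof of \Cref{l:Qs_C1}, but with the maximality inequality $\vvp^{m_1}\partial f(v')\le\vvO^{m_1}\partial f(v_0)$ playing the role of the pointwise $\|\nabla_v f\|$ bound used there. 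Choosing $r=\vvO^\mu/2$ with $\mu$ as in \Cref{l:Qs_C1} (so $\mu(1-2s)+\gamma+2s\le 0$): on $B_r(v_0)$, a Taylor expansion of $\vv^{-m_1}$ combined with maximality gives $\partial f(v')-\partial f(v_0)\le C\vvO^{-1}|v'-v_0|\,\partial f(v_0)$, and—\emph{because $s<1/2$}—\Cref{l:new kernel} applies with the exponent $\alpha=1>2s$, which is precisely what absorbs the extra factor $|v'-v_0|$; on $B_r(v_0)^c$, the term $-\partial f(v_0)\int K_g\le 0$ has the good sign and may be dropped, while $\int_{B_r(v_0)^c}\partial f(v')K_g\,dv'\le P(t)\int_{B_r(v_0)^c}\vvp^{-m_1}K_g(v_0,v')\,dv'$ is controlled via \Cref{p.w3311}.\eqref{l.is3.4} on the region $\vvp\gtrsim\vvO$ and via the refined bound \Cref{l: w0524} on the region $\vvp\ll\vvO$ (where $|v_0-v'|\gtrsim\vvO$), each time exploiting $m_1>3+\gamma+2s$. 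Altogether this yields $\vvO^{m_1}Q_\s(g,\partial f)(v_0)\les\|g\|_{\tilde X^{m_0,m_1}}P(t)$. I expect this collision estimate at the extremum—and specifically making the near-diagonal piece close, which is exactly where the hypothesis $s<1/2$ is used—to be the main obstacle; everything else is bookkeeping.

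Collecting the estimates (and the symmetric one at a negative minimum), and using the standard fact that the Lipschitz function $P$ satisfies $P'(t)\le\partial_t h(t,x_0(t),v_0(t))$ for a.e.\ $t$, I arrive at the differential inequality $P'(t)\le C(1+\|g(t)\|_{\tilde X^{m_0,m_1}})\big(P(t)+\|f(t)\|_{L^{\infty,m_0}}\big)+C\|R(t)\|_{\tilde X^{m_0,m_1}}$. Substituting $\|f(t)\|_{L^{\infty,m_0}}\le\Phi$, applying Gr\"onwall's inequality, and combining with the $L^{\infty,m_0}$ bound then gives the claimed estimate for $\|f\|_{\tilde Y^{m_0,m_1}}$.
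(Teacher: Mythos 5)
Your proposal is correct and follows essentially the same route as the paper: a comparison/maximum-principle argument for the weighted first derivatives, with the source $Q(\partial g,f)$ controlled by \Cref{l:Qs_C1} (this being where $s<1/2$ enters) and the diffusive term $Q_\s(g,\partial f)$ at the extremum bounded by the same kernel estimates (\Cref{l:new kernel}, \Cref{p.w3311}.\eqref{l.is3.4}, \Cref{l: w0524}). The only differences are presentational: the paper runs the argument as a barrier comparison with $F=\phi(t)\vv^{-m_1}$ and a first-touching-time contradiction, citing \cite[Proposition~3.1.(v)]{HST_boltz_wp} for the bound on $Q(g,\vv^{-m_1})$, whereas you derive the equivalent differential inequality for $P(t)=\|\vv^{m_1}\partial f\|_{L^\infty}$ directly and re-prove that collision bound in-line.
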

\begin{proof}
	First notice that the proof of the bound
	\be\label{e.c683}
		\|f\|_{L^\infty([0,T];L^{\infty,m_0})}
			\lesssim e^{C \int_0^T\|g(t)\|_{L^{\infty,m_0}} dt } \Big(\|f_{\rm in}\|_{\tilde X^{m_0,m_1}}
					+ \int_0^T \|R(t)\|_{ \tilde X^{m_0,m_1}} dt\Big).
	\ee
	is exactly the same as the (brief) proof in \cite[Proposition~3.1]{HST_boltz_wp} and, hence, is omitted here.  We note that it is a simpler version of the proof of the bounds on the derivatives that follows.
	
We now focus instead on bounding $\nabla_{x,v}f$.  Fix $\phi(t)$ to be an increasing function to be determined such that $\phi(0) = \|\nabla_{x,v}f_{\rm in}\|_{L^{\infty,m_1}}$, and let $F(t,x,v) = \phi(t) \vv^{-m_1}$.  Clearly we have that
\be\label{e.c651}
	F(0,x,v) > \max\left\{|\partial_{x_i} f_{\rm in}(x,v)|, |\partial_{v_i}f(x,v)| : i \in \{1,2,3\}\right\}
		\quad\text{ for all } (x,v).
\ee
Let $t_0$ be the first time that the above inequality is violated.  If $t_0$ does not exist, we are finished.  Hence, we argue by contradiction assuming that there exists $t_0 \in [0,T]$.  Without loss of generality\footnote{Indeed, the only technical issue here is if the inequality is violated at $|v|=\infty$.  One may sidestep this by simply including a cutoff as a multiplicative factor of the initial data and of $R$.  It then follows from standard facts about the heat equation that $f$ and its derivatives decay as a Gaussian at high velocities.  The cutoff can be removed by a limiting procedure.}, we may assume that there exists $(x_0,v_0) \in \T^3 \times \R^3$ such that equality above holds in~\eqref{e.c651} at the point $(t_0,x_0,v_0)$.  Assume momentarily that
\be
	F(t_0,x_0,v_0)
		= \partial_{x_1} f(t_0,x_0,v_0).
\ee
The cases where $i = 2,3$ are clearly analogous, as are the case when a negative sign appears in the equality (i.e., $F = - \partial_{x_1} f$).  The case when the derivative is in the $v$ variable is slightly more complicated as new terms arise, but these new terms can be handled in a straightforward way.

Since $F - \partial_{x_1} f \geq 0$ on $[0,t_0]\times\T^3\times\R^3$, we find
\be\label{e.c652}
	0
		\geq \partial_t (F - \partial_{x_1} f)
			+ v\cdot \nabla_x(F - \partial_{x_1} f)
			- (\epsilon + (1-\sigma)) \Delta_{x,v} (F - \partial_{x_1} f)
			- \sigma Q_{\epsilon,\delta}(g, F - \partial_{x_1} f).
\ee
We use this to derive a contradiction.

On the one hand, an explicit computation for $F$, along with \cite[Proposition 3.1.(v)]{HST_boltz_wp} yields
\be\label{e.c653}
	\partial_t F + v\cdot\nabla_x F - (\epsilon + (1-\sigma)) \Delta_{x,v} F - \sigma Q_{\epsilon, \delta}(g,F)
		\geq \phi' \vvO^{-m_1}
			- C\phi (1 + \|g\|_{L^{\infty,m_1}}) \vvO^{-m_1},
\ee
where we used that $m_1 > 3 + \gamma + 2s$, a condition of the quoted result.

On the other hand, using \Cref{l:Qs_C1}, we find
\be\label{e.c654}
	\begin{split}
		\partial_t \partial_{x_1} f
			&+ v\cdot\nabla_x\partial_{x_1} f
			- (\epsilon + (1-\sigma)) \Delta_{x,v} \partial_{x_1} f 
			- \sigma Q_{\epsilon, \delta}(g,\partial_{x_1} f)
			= \sigma Q_{\epsilon, \delta}(\partial_{x_1} g, f) + \partial_{x_1}R
			\\& \lesssim \left(\|g(t_0)\|_{\tilde X^{m_0,m_1}} \left( \|f\|_{L^{\infty, m_0}} + \|\nabla f\|_{L^{\infty,m_1}}\right)
				+ \|R(t_0)\|_{\tilde X^{m_0,m_1}}\right) \vvO^{-m_1}\\
			&\leq \left(\|g(t_0)\|_{\tilde X^{m_0,m_1}} \left( \|f\|_{L^{\infty, m_0}} + \phi\right)
				+ \|R(t_0)\|_{\tilde X^{m_0,m_1}}\right) \vvO^{-m_1}.
	\end{split}
\ee
Using~\eqref{e.c683}, it is clear from~\eqref{e.c653} and~\eqref{e.c654} that we can choose $\phi$ to obtain a contradiction in~\eqref{e.c652}.  This yields a contradiction.  Hence~\eqref{e.c651} always holds, finishing the proof.
\end{proof}

As usual, once a priori estimates are established, the construction of a solution follows easily. In fact, in this case, the solution can be constructed exactly as in \cite{HST_boltz_wp}. Indeed, one can use the method of continuity as well as a smoothing argument in order to establish the existence of solutions to the linear problem.  After this, an iteration yields a solution to the nonlinear problem. As it is exactly the same as in \cite{HST_boltz_wp}, we omit the details.

One subtle issue that may cause worry is whether the process above provides a $W^{1,\infty}$ solution instead of $C^1$.   However, at the level of the method of continuity, the solutions constructed is smooth.  Hence all quantities $\partial_t f$, $\nabla_x f$, and $\nabla_v f$ are continuous and such continuity is passed through all (locally uniform) limits.

For uniqueness, one can actually simply use an $L^2$-based argument.  Indeed, a quick check of the arguments in \Cref{uniqueness} reveals that they can be adapted in a straightforward way to use only the $Y^{m_0,m_1}$ norms of two potential solutions $f$ and $g$.  Actually, the proof is {\em easier} in this case as there is no need to use the Sobolev embedding theorem.

The above concludes the proof of \Cref{t:C1_wellposed}.

\section*{Acknowledgments}
CH was partially supportd by NSF grant DMS-2003110. WW was partially supported by NSF grant DMS-1928930 while participating in the Mathematical Fluid Dynamics program hosted by the Mathematical Sciences Research Institute in Berkeley, California, during the Spring 2021 semester.

\bibliographystyle{abbrv}
\bibliography{boltz_wellposed_HW}

\end{document}